\numberwithin{equation}{section}
\newcommand{\Z}{\mathbb Z}
\newcommand{\C}{\mathbb C}
\newcommand{\F}{\mathcal F}
\newcommand{\W}{\mathcal W}
\newcommand{\bea}{\begin{eqnarray}}
\newcommand{\eea}{\end{eqnarray}}
\newtheorem{Theorem}{Theorem}[section]
\newtheorem*{Theorem*}{Theorem}
\newtheorem{Corollary}[Theorem]{Corollary}
\newtheorem{Lemma}[Theorem]{Lemma}
\newtheorem{Proposition}[Theorem]{Proposition}
\newtheorem{conj}[Theorem]{Conjecture}
\newtheorem{Claim}[Theorem]{Claim}
\theoremstyle{definition}
\newtheorem{Definition}[Theorem]{Definition}
\newtheorem{Remark}[Theorem]{Remark}
\begin{document}
\allowdisplaybreaks

\newcommand{\arXivNumber}{2411.08406}

\renewcommand{\thefootnote}{}

\renewcommand{\PaperNumber}{036}

\FirstPageHeading

\ShortArticleName{Kazama--Suzuki Duality between $\mathcal W_k(\mathfrak{sl}_4, f_{\rm sub})$ and $N=2$ Superconformal Vertex Algebra}

\ArticleName{On Kazama--Suzuki Duality between $\boldsymbol{\mathcal W_k(\mathfrak{sl}_4, f_{\rm sub})}$\\ and $\boldsymbol{N=2}$ Superconformal Vertex Algebra\footnote{This paper is a~contribution to the Special Issue on Recent Advances in Vertex Operator Algebras in honor of James Lepowsky. The~full collection is available at \href{https://www.emis.de/journals/SIGMA/Lepowsky.html}{https://www.emis.de/journals/SIGMA/Lepowsky.html}}}

\Author{Dra\v{z}en ADAMOVI\'C~$^{\rm a}$ and Ana KONTREC~$^{\rm b}$}

\AuthorNameForHeading{D.~Adamovi\'c and A.~Kontrec}

\Address{$^{\rm a)}$~Department of Mathematics, Faculty of Science, University of Zagreb,\\
\hphantom{$^{\rm a)}$}~Bijeni\v{c}ka 30, Zagreb, Croatia}
\EmailD{\href{mailto:adamovic@math.hr}{adamovic@math.hr}}
\URLaddressD{\url{ https://web.math.pmf.unizg.hr/~adamovic/}}

\Address{$^{\rm b)}$~Research Institute for Mathematical Sciences (RIMS), Kyoto University,\\
\hphantom{$^{\rm b)}$}~Kyoto 606-8502, Japan}
\EmailD{\href{mailto:akontrec@kurims.kyoto-u.ac.jp}{akontrec@kurims.kyoto-u.ac.jp}}

\ArticleDates{Received December 31, 2024, in final form May 08, 2025; Published online May 17, 2025}

\Abstract{We classify all possible occurrences of Kazama--Suzuki duality between the ${N=2}$ superconformal algebra $L^{N=2}_c$ and the subregular $\mathcal{W}$-algebra $\mathcal{W}_{k}(\mathfrak{sl}_4, f_{\rm sub})$. We establish a new Kazama--Suzuki duality between the subregular $\mathcal{W}$-algebra $\mathcal{W}_k(\mathfrak{sl}_4, f_{\rm sub})$ and the $N = 2$ superconformal algebra $L^{N=2}_{c}$ for $c=-15$. As a consequence of the duality, we classify the irreducible $\mathcal{W}_{k=-1}(\mathfrak{sl}_4, f_{\rm sub})$-modules.}

\Keywords{vertex algebra; subregular $W$-algebras; $N=2$ superconformal vertex algebra; Kazama--Suzuki duality}

\Classification{17B69; 17B67; 17B68}

\begin{flushright}
\begin{minipage}{58mm}
\it Dedicated to James Lepowsky\\ on the occasion of his 80th birthday
\end{minipage}
\end{flushright}

\renewcommand{\thefootnote}{\arabic{footnote}}
\setcounter{footnote}{0}

\section{Introduction}

{\bf Kazama--Suzuki coset construction and vertex algebra reformulation.} In the paper~\cite{KS}, Kazama and Suzuki described a construction of the $N=2$ superconformal algebra, including $N=2$ superconformal minimal models, using the coset space method \cite{GKO}. They determined under which conditions the $N=1$ super-GKO coset construction could be extended to the $N = 2$ superconformal algebra.

In an effort to better understand the highest weight-type modules for the $N = 2$ algebra, Feigin, Semikhatov and Tipunin showed that certain categories of modules of the $N = 2$ superconformal algebra $L^{N=2}_{c}$ and affine vertex algebra $L_s(\mathfrak{sl}_2)$ are equivalent by constructing the so-called Kazama--Suzuki (KS) and inverse Kazama--Suzuki mapping~\cite{FST}.

The first named author interpreted in \cite{A-IMRN} the Kazama--Suzuki \cite{KS} and inverse Kazama--Suzuki mapping \cite{FST} in the language of vertex algebras as embeddings between certain simple vertex operator algebras (VOAs). In vertex algebra terms, the $N = 2$ algebra $L^{N=2}_c$ is realized as a~coset~${\operatorname{Com} (\mathcal{H}, L_s(\mathfrak{sl}_2) \otimes \F_1)}$, where $\mathcal H$ is a certain Heisenberg subalgebra and $\F_1$ is a lattice vertex algebra. As a consequence of the duality, he obtained a complete classification of irreducible $L^{N=2}_{c_m}$-modules for admissible~$m$. The Kazama--Suzuki duality was used in \cite{A-2001} for the proof of rationality and regularity of $L^{N=2}_{c_m}$ for~${m \in {\Z}}$, and determination of the fusion rules.
 Extension of this work appeared recently in~\cite{CGN}, where the authors studied Kazama--Suzuki type of dualities between subregular $\mathcal{W}$-algebras and principal $\mathcal{W}$-superalgebras.

Assume that $U$, $V$ are vertex (super)algebras. We say that $V$ is the \emph{Kazama--Suzuki dual} of~$U$ if there exist injective homomorphisms of vertex superalgebras
\begin{equation} \label{KSdef}
	\varphi_1 \colon\ V \rightarrow U \otimes \F_1, \qquad \varphi_2 \colon\ U \rightarrow V \otimes \F_{-1},
\end{equation}
so that
$V \cong \operatorname{Com} ( M_{H_1} (1) , U \otimes \F_1)$, $ U \cong \operatorname{Com} ( M_{H_2} (1) , V \otimes \F_{-1})$,
where $M_{H_1} (1) $ (resp.\ $ M_{H_2} (1)$) is a rank one Heisenberg vertex subalgebra of
$U\otimes \F_1$ (resp.\ $V\otimes \F_{-1}$) and $\F_{\pm 1}$ are lattice vertex superalgebras associated to rank one lattice $L= {\Z}\varphi^{\pm} $, $\big\langle \varphi^{\pm}, \varphi^{\pm} \big\rangle = \pm 1$ (cf.\ Section \ref{F_{-1}}).

An immediate benefit of this type of duality is that it provides us with an insight into the representation theory of the dual algebra, i.e., the classification and realization of modules for one algebra can be obtained from its dual. It is therefore particularly useful to find dual pairs of algebras where the representation theory of one of them is better understood, and use it to study the representations of the other one. An important question is how to find and classify instances of this type of duality.

{\bf Classifying Kazama--Suzuki duality between $\boldsymbol{\mathcal{W}_k(\mathfrak{sl}_4, f_{\rm sub})}$ and $\boldsymbol{L^{N=2}_c}$.} In this paper, we classify all possible occurrences of Kazama--Suzuki duality between the $N=2$ superconformal algebra $L^{N=2}_c$ and the subregular algebra $\mathcal{W}$-algebra $\mathcal{W}_{k}(\mathfrak{sl}_4, f_{\rm sub})$.

In general, in order for two algebras to be in Kazama--Suzuki type duality, their Heisenberg cosets need to coincide. Recall that $L^{N=2}_c$ and $L_s(\mathfrak{sl}_2)$ are in Kazama--Suzuki duality for $c =\frac{3s}{s+2}$ (cf.\ \cite{A-IMRN}), hence their coset subalgebras will coincide. That means that a necessary condition for $\mathcal{W}_k(\mathfrak{sl}_4, f_{\rm sub})$ and $L^{N=2}_c$ to be in duality is that the coset algebra $ \mathcal C_k = \operatorname{Com} (M_H(1) ,\mathcal W_k(\mathfrak{sl}_4, f_{\rm sub})) $ of $\mathcal W_k(\mathfrak{sl}_4, f_{\rm sub})$ and the parafermion algebra $ \mathcal N_s(\mathfrak{sl}_2) = \operatorname{Com} (M_H(1) , L_s(\mathfrak{sl}_2)) $ of $L_s(\mathfrak{sl}_2)$ coincide, where $M_H(1)$ denotes the rank one Heisenberg vertex algebra generated by $H$.

To determine at which levels $k$, $s$ the coset algebras $\mathcal C_k$ and $\mathcal N_s(\mathfrak{sl}_2)$ coincide, we rely on the fact that these algebras have a realization as certain quotients of the universal two-parameter vertex algebra $\mathcal W (c, \lambda)$ from \cite{Linshaw}. These quotients are parametrized by certain rational curves, and there is a simple criterion from \cite{Linshaw} which determines when they are isomorphic (cf.\ Section~\ref{univ_W}). This gives us a list of potential candidates for KS duality.

However, it is easy to observe that for $\mathcal{W}_k(\mathfrak{sl}_4, f_{\rm sub})$ and $L^{N=2}_c$ to be in duality, i.e., for there to exist an embedding of type \eqref{KSdef}, \smash{$(G^+)^2 = G^+ _{(-1)} G^+$} needs to be zero in $\mathcal W_k(\mathfrak{sl}_4, f_{\rm sub})$.
 Indeed, $G^+ \otimes e^{\varphi}$ should be proportional to the fermionic generator of the $N=2$ superalgebra, which we denote by $E$, and for which we need to have that $E_{(0)} E =0$. This leads to the conclusion that
 \[
 0 = (G^+ \otimes e^{\varphi} )_{(0)}  (G^+ \otimes e^{\varphi} ) = (G^+) ^2 \otimes e^{2 \varphi},
 \]
 and therefore $(G^+)^2 =0$. In other words, either $G^+$ or $(G^+)^2$ needs to be a singular vector in~$\mathcal W^k(\mathfrak{sl}_4, f_{\rm sub})$.

Applying these two necessary conditions leaves us with two possible candidates: $k=-1$, $c=-15$ and $k=-\frac 73$, $c=1$. We construct explicit Kazama--Suzuki embeddings in those two cases, proving the following.

\begin{Theorem}The vertex algebras $L^{N=2}_c$ and $W_k(\mathfrak{sl}_4, f_{\rm sub})$ are in Kazama--Suzuki duality if and only if $k=-1$ and $c=-15$ or $k=-\frac 73$ and $c=1$.
\end{Theorem}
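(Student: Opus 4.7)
The plan is to establish the theorem by separating the equivalence into its two implications, following the roadmap laid out in the introduction. The ``only if'' direction is a pinching argument using the two necessary conditions already identified; the ``if'' direction requires constructing explicit embeddings $\varphi_1$ and $\varphi_2$ of the form \eqref{KSdef} in each of the two candidate cases and verifying that the resulting cosets recover the respective algebras.

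For the ``only if'' direction, I would first invoke the Heisenberg coset matching condition. If $\mathcal{W}_k(\mathfrak{sl}_4, f_{\rm sub})$ and $L^{N=2}_c$ are Kazama--Suzuki dual, then the established KS duality between $L^{N=2}_c$ and $L_s(\mathfrak{sl}_2)$ for $c=3s/(s+2)$ forces $\mathcal{C}_k \cong \mathcal{N}_s(\mathfrak{sl}_2)$. Since both cosets arise as quotients of the universal two-parameter algebra $\mathcal{W}(c,\lambda)$, and the isomorphism criterion from \cite{Linshaw} parametrizes precisely when two such quotients coincide, this reduces to matching rational curves and produces a discrete family of candidate pairs $(k,s)$. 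Second, as noted in the excerpt, existence of an embedding of the form \eqref{KSdef} forces $(G^+)^2=0$ in $\mathcal{W}_k(\mathfrak{sl}_4, f_{\rm sub})$, so either $G^+$ or $(G^+)^2$ must be singular in the universal algebra $\mathcal{W}^k(\mathfrak{sl}_4, f_{\rm sub})$. Standard singular vector analysis (conformal weight count, and Shapovalov-type computations in the generating subspace) gives a short list of levels $k$. Intersecting the two lists leaves precisely $k=-1$ (with $c=-15$) and $k=-\tfrac{7}{3}$ (with $c=1$).

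For the ``if'' direction, the task in each case is to produce the two embeddings of \eqref{KSdef}. The natural candidates are to identify the fermionic generators of $L^{N=2}_c$ with elements of the form $G^{\pm}\otimes e^{\pm\varphi}$ inside $\mathcal{W}_k(\mathfrak{sl}_4, f_{\rm sub}) \otimes \F_1$, and dually to produce the supercurrents of $\mathcal{W}_k(\mathfrak{sl}_4, f_{\rm sub})$ as composites of the $N=2$ generators with vertex operators $e^{\pm \varphi^-}$ in $L^{N=2}_c \otimes \F_{-1}$. One then verifies at the level of OPEs that these prescriptions define honest vertex superalgebra homomorphisms, using $(G^+)^2=0$ for nilpotency and the Heisenberg coset identification from the first half for the bosonic OPEs. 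Injectivity then follows from simplicity once nontriviality of the image is ensured, and the required coset identifications $V \cong \operatorname{Com}(M_{H_1}(1), U \otimes \F_1)$ reduce to verifying that the image together with the distinguished Heisenberg generator exhausts the ambient algebra, which can be handled via character comparison or by again invoking the universal $\mathcal{W}(c,\lambda)$ quotient classification.

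The main obstacle will be the explicit construction in the new case $k=-1$, $c=-15$, since the case $k=-\tfrac{7}{3}$, $c=1$ can be anticipated to reduce to a free-field realization. For $k=-1$ one has to exhibit the $N=2$ generators concretely inside $\mathcal{W}_{-1}(\mathfrak{sl}_4, f_{\rm sub}) \otimes \F_1$ and check a nontrivial list of OPEs, and the inverse map requires producing enough vectors inside $L^{N=2}_{-15}\otimes \F_{-1}$ to generate a copy of $\mathcal{W}_{-1}(\mathfrak{sl}_4, f_{\rm sub})$. Verifying that these embeddings \emph{exhaust} the respective Heisenberg cosets---as opposed to merely embedding one algebra in the other---is where the identification with the universal $\mathcal{W}(c,\lambda)$ quotients will do the heavy lifting.
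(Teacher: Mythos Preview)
Your outline matches the paper's for the ``only if'' direction and for the $k=-\tfrac{7}{3}$ case (where indeed $\mathcal{W}_{-7/3}(\mathfrak{sl}_4,f_{\rm sub})\cong \F_4$ and $L^{N=2}_{c=1}\cong\F_3$ as lattice vertex algebras, so the duality is immediate). The substantive difference is in how the coset \emph{exhaustion} is verified for $k=-1$. The paper does not use character comparison or the $\mathcal{W}(c,\lambda)$ classification for this step; instead it gives a direct structural decomposition via spectral flow and Li's simple-current extension, showing
\[
\mathcal{W}_{-1}(\mathfrak{sl}_4,f_{\rm sub})\otimes\F_{-1}\cong\bigoplus_{n\in\mathbb{Z}}\sigma^n\bigl(L^{N=2}_{c=-15}\bigr)\otimes M_{H^\perp}(1,n),
\]
from which both the coset identification and simplicity of the image (hence injectivity and the identification $\widetilde W=\mathcal{W}_{-1}$) fall out simultaneously. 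Your suggestion to invoke the $\mathcal{W}(c,\lambda)$ classification here would not apply directly: that classification governs the parafermion cosets $\mathcal{N}_c$ and $\mathcal{C}_k$, not the full Heisenberg coset $\operatorname{Com}(M_{H^\perp}(1),\mathcal{W}_{-1}\otimes\F_{-1})$, which is supposed to be all of $L^{N=2}_{c=-15}$ rather than just its parafermion part. Character comparison could in principle close this gap but would be considerably less transparent than the spectral-flow argument. A minor notational point: you have the $\F_{\pm 1}$ assignments swapped relative to the paper's convention---$L^{N=2}_c$ embeds into $\mathcal{W}_k\otimes\F_{-1}$ and $\mathcal{W}_k$ into $L^{N=2}_c\otimes\F_{1}$, not the other way around.
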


According to a result of \cite{CGN}, the subregular and principal $\mathcal W$-algebras $\mathcal{W}^{k_1}(\mathfrak{sl}_n, f_{\rm sub})$ and $\mathcal{W}^{k_2} (\mathfrak{sl}(1|n), f_{\rm pr})$ are in KS duality for $k_1, k_2 \in \mathbb{C}$ satisfying the relation
\[ (k_1+ n) (k_2 + n - 1) = 1. \]

For $n=2$, this result recovers the `original' Kazama--Suzuki duality of $L^{N=2}_c$ and $L_s(\mathfrak{sl}_2)$, since $\mathcal{W}^{k'} (\mathfrak{sl}(1 \vert 2), f_{\rm pr})$ is exactly the $N=2$ superconformal algebra of central charge $c= - 3( 1+ 2k')$.

The coset algebras of $\mathcal W_{-n+1} (\mathfrak{sl}_{n+2}, f_{\rm sub})$ and $\mathcal W_{-n+ 1/3} (\mathfrak{sl}_n, f_{\rm sub})$ coincide by a result of \cite{CL} (see also \cite{Linshaw}). Using the duality of \cite{CGN}, it follows that the coset algebras of $\mathcal W_{4-n} (\mathfrak{sl}(1 \vert n), f_{\rm pr})$ and $\mathcal W_{-n+1} (\mathfrak{sl}_{n+2}, f_{\rm sub})$ are also isomorphic.
Since the necessary condition for $\mathcal W_{4-n} (\mathfrak{sl}(1\vert n), f_{\rm pr})$ and $\mathcal W_{-n+1} (\mathfrak{sl}_{n+2}, f_{\rm sub})$ to be in Kazama--Suzuki duality is fulfilled, we conjecture that the same relation holds more generally.

\begin{conj}Let $k_1 = -n+4$ and $k_2=-n+1$. Then there is a Kazama--Suzuki type duality between
$\mathcal W_{k_1} (\mathfrak{sl}(1\vert n), f_{\rm pr})$ and $\mathcal W_{k_2} (\mathfrak{sl}_{n+2}, f_{\rm sub})$.
\end{conj}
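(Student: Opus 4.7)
The plan is to generalize the two-step strategy used for $n=2$ (which is the content of the main theorem of the paper): first verify the two necessary conditions, namely the coincidence of Heisenberg cosets and the vanishing of a singular vector of the form $(G^+)^2$, and then construct explicit Kazama--Suzuki embeddings $\varphi_1$, $\varphi_2$ as in \eqref{KSdef}. The coincidence of cosets of $\mathcal W_{k_1}(\mathfrak{sl}(1|n), f_{\rm pr})$ and $\mathcal W_{k_2}(\mathfrak{sl}_{n+2}, f_{\rm sub})$ is already established in the paragraph preceding the conjecture by combining the CGN duality with the coset coincidence of \cite{CL}, and by applying the isomorphism criterion of \cite{Linshaw} for quotients of the universal two-parameter $\mathcal W(c,\lambda)$.

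Next, one should locate a charged fermionic generator $G^+$ in $\mathcal W^{k_2}(\mathfrak{sl}_{n+2}, f_{\rm sub})$ and prove that $(G^+)^2 = G^+_{(-1)} G^+$ is a singular vector at $k_2 = -n+1$, so that it vanishes in the simple quotient; an analogous singular vector must be exhibited on the $\mathcal W^{k_1}(\mathfrak{sl}(1|n), f_{\rm pr})$ side at $k_1 = -n+4$. The natural tools here are the Kac--Kazhdan/Malikov--Feigin--Fuchs type criteria for singular vectors in affine $\mathcal W$-algebras, or equivalently a Wakimoto-type free field realization of the subregular $\mathcal W$-algebra (respectively a Kac--Roan--Wakimoto realization of the principal $\mathcal W$-superalgebra) in which the charge-$2$ component of the OPE $G^+(z) G^+(w)$ can be written in closed form.

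With these necessary conditions in hand, the embeddings $\varphi_1$, $\varphi_2$ should follow the $\mathfrak{sl}_4$ template: inside the tensor products with $\F_{\pm 1}$, one defines the charged fermionic generators of the dual algebra as $G^\pm \otimes e^{\pm \varphi}$, assembles the $U(1)$-current from the Heisenberg subalgebra of the target and the lattice current $\partial \varphi$, and recovers the remaining bosonic generators from the common coset. That these assignments extend to vertex superalgebra homomorphisms reduces to a finite list of OPE identities, most of which are automatic from $(G^+)^2 = 0$ and the identification of Heisenberg cosets; the coset identities $V \cong \operatorname{Com}(M_{H_1}(1), U \otimes \F_1)$ and its partner then follow from the universality of the Heisenberg decomposition.

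The main obstacle is expected to be the uniform verification of the singular vector vanishing and of the OPE identities for all $n$ simultaneously. As $n$ grows, both $\mathcal W$-algebras acquire more generators, and explicit Miura-type computations become combinatorially heavy. The most promising route around this is to exploit Feigin--Frenkel-type dualities combined with the inverse quantum Hamiltonian reduction of \cite{CGN}, so that the duality for general $n$ is reduced to a screening-operator statement inside a common free field algebra, rather than being checked case by case.
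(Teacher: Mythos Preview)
The statement you are attempting to prove is labelled a \emph{Conjecture} in the paper, and the paper gives no proof of it; it only establishes the special case $n=2$ (Theorem~1.1 and Section~\ref{duality}) and records the general statement as open. So there is no ``paper's own proof'' to compare against, and your proposal should be read as a strategy toward an open problem rather than an alternative argument.

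As a strategy it is reasonable in outline and follows the $n=2$ template, but two points deserve correction or sharpening. First, the singular-vector step on the subregular side is already available and does not require new Kac--Kazhdan or Wakimoto arguments: Proposition~\ref{kriterij} (Fehily's criterion) and Lemma~\ref{sing_G^+}(i), applied with $n$ replaced by $n+2$, give directly that $(G^+)^2$ is singular in $\mathcal W^{k}(\mathfrak{sl}_{n+2}, f_{\rm sub})$ precisely at $k=-(n+2)+3=-n+1=k_2$, and that $G^+$ itself is not. So that necessary condition is already in hand.

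Second, and this is the genuine gap, the passage ``these assignments extend to vertex superalgebra homomorphisms reduces to a finite list of OPE identities, most of which are automatic'' is where the conjecture actually lives. For $n=2$ the paper can carry this out because the full OPEs of $\mathcal W^k(\mathfrak{sl}_4, f_{\rm sub})$ are explicitly known (Appendix~\ref{appendix-b}) and the dual side is just the $N=2$ algebra with four generators. For general $n$, both $\mathcal W^{k_2}(\mathfrak{sl}_{n+2}, f_{\rm sub})$ and $\mathcal W^{k_1}(\mathfrak{sl}(1\vert n), f_{\rm pr})$ have strong generators in every weight up to roughly $n$, closed-form OPEs are not available, and the ``finite list'' grows with $n$ and is not automatic from $(G^+)^2=0$ and the coset coincidence alone: one must also match all the higher-weight bosonic generators $W_3,\dots$ on both sides and check their mutual OPEs and their OPEs with $G^\pm\otimes e^{\pm\varphi}$. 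Your final paragraph correctly identifies this as the obstacle and points to screening-operator/free-field methods as a possible route, but that is precisely the content of the conjecture, not a proof of it.
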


Our result above states that $\mathcal W_{k_1} (\mathfrak{sl}(1\vert 2), f_{\rm pr})$ and $\mathcal W_{k_2}(\mathfrak{sl}_4, f_{\rm sub})$ are in Kazama--Suzuki duality for $k_1= 2$, $k_2= - 1$, therefore being a special case of this conjecture for $n=2$.

{\bf Constructing Kazama--Suzuki duality for subregular $\boldsymbol{\mathcal{W}}$-algebras.} Let $\mathfrak{g}$ be a simple finite-dimensional Lie algebra and $k \in \mathbb{C}$. Let $\mathcal{W}^k(\mathfrak{g}, f_{\rm sub})$ be the (universal) subregular $\mathcal{W}$-algebra corresponding to the subregular nilpotent element $f_{\rm sub}$ (see \cite{KW}) and~$\mathcal{W}_k(\mathfrak{g}, f_{\rm sub})$ its simple quotient. The OPEs for subregular $\mathcal{W}$-algebras are not known in general, but for $\mathfrak g = \mathfrak{sl}_n$, it was proved in \cite{Gen} that $\mathcal{W}^k(\mathfrak{g}, f_{\rm sub})$ are isomorphic to the Feigin--Semikhatov algebras \smash{$W_n^{(2)}$} (cf.~\cite{FS}). For $n=2$, $\mathcal{W}^k(\mathfrak{sl}_n, f_{\rm sub})$ is isomorphic to the affine vertex algebra $V^k(\mathfrak{sl}_2)$, while for~${n=3}$ it is the Bershadsky--Polyakov algebra $\mathcal{W}^k(\mathfrak{sl}_3, f_{\rm sub})$.

 In the paper \cite{AK-2022}, we showed that the affine vertex {super}algebra $L_{k'} (\mathfrak{osp}(1 \vert 2))$ is the Kazama--Suzuki dual of the Bershadsky--Polyakov algebra $\mathcal{W}_k(\mathfrak{sl}_3, f_{\rm sub})$ for $k'=-\frac 54$ and $k=1$.
 More precisely, there exists an embedding of $\mathcal{W}_k(\mathfrak{sl}_3, f_{\rm sub})$ into the tensor product of the affine vertex {super}algebra $L_{k'} (\mathfrak{osp}(1 \vert 2))$ at level $k'=-\frac 54$ and the Clifford vertex {super}algebra $F$ (which is, by the boson-fermion correspondence, isomorphic to the lattice vertex superalgebra $\F_1$); and the corresponding inverse embedding.
 In \cite{A-2019}, it was proved that \smash{$L_{-5/4}(\mathfrak{osp}(1\vert 2))$} can be realized as the vertex {super}algebra $F^{1/2} \otimes \Pi^{1/2}(0)$, where $\Pi^{1/2}(0)$ is a lattice type vertex algebra, and~$F^{1/2}$ is a Clifford vertex {super}algebra.
 Using this result, and the fact that at level~${k'=-\frac 54}$ there is a conformal embedding of $L_{k'}(\mathfrak{sl}_2)$ into $L_{k'}(\mathfrak{osp}(1 \vert 2))$ (cf.\ \cite{AKMPP-2020}),
we obtained a~realization of the Bershadsky--Polyakov algebra $\mathcal{W}_k(\mathfrak{sl}_3, f_{\rm sub})$ and an explicit construction of irreducible $\mathcal{W}_k(\mathfrak{sl}_3, f_{\rm sub})$-modules.
Relaxed modules for $L_{k'}(\mathfrak{osp}(1 \vert 2))$ are mapped to the ordinary~${\mathcal{W}_k(\mathfrak{sl}_3, f_{\rm sub})}$-modules, for which one expects it is easier to obtain the tensor category structure and calculate the fusion rules.

 In this paper, we construct explicit Kazama--Suzuki type embeddings between the subregular $\mathcal{W}$-algebra $\mathcal{W}_k(\mathfrak{sl}_4, f_{\rm sub})$ and the $N = 2$ superconformal algebra $L^{N=2}_{c}$ for $k=-1$, $c=-15$, thus obtaining a realization of $\mathcal{W}_k(\mathfrak{sl}_4, f_{\rm sub})$ and its irreducible modules.

 We show (cf.\ Propositions \ref{ulaganje-1} and \ref{ulaganje-2}) the following.

 \begin{Theorem}
 	There exist embeddings of vertex superalgebras
 \[
 \Phi\colon\ L^{N=2}_{c=-15} \rightarrow \mathcal{W}_{-1}(\mathfrak{sl}_4, f_{\rm sub}) \otimes \F_{-1}, \qquad \Phi^{\rm inv} \colon\ \mathcal{W}_{-1}(\mathfrak{sl}_4, f_{\rm sub}) \rightarrow L^{N=2}_{c=-15} \otimes \F_{1}
 \]
 such that
 \begin{gather*}
 	L^{N=2}_{c=-15} \cong \operatorname{Com} (M_{H^{\perp}} (1), \mathcal{W}_{-1}(\mathfrak{sl}_4, f_{\rm sub}) \otimes \F_{-1}), \\
 	\mathcal{W}_{-1}(\mathfrak{sl}_4, f_{\rm sub}) \cong \operatorname{Com} \bigl(M_{\overline{H}}(1), L^{N=2}_{c=-15} \otimes \F_{1}\bigr),
 \end{gather*}
 where $M_{H^{\perp}}(1)$ and \smash{$M_{\overline{H}} (1)$} are Heisenberg vertex algebras.
 \end{Theorem}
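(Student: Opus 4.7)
The proof strategy should follow the framework of \cite{A-IMRN} and its recent adaptation to the Bershadsky--Polyakov setting in \cite{AK-2022}. The two embeddings $\Phi$ and $\Phi^{\rm inv}$ will be built as symmetric tensor shifts by the lattice generators of $\F_{\mp 1}$, and the coset identifications will be extracted as a consequence. The key preparatory input is the vanishing $(G^{\pm})^2 = 0$ inside the simple quotient $\mathcal W_{-1}(\mathfrak{sl}_4, f_{\rm sub})$, singled out in the introduction as the necessary condition for duality; I would establish it using the Feigin--Semikhatov presentation $\mathcal W^k(\mathfrak{sl}_4, f_{\rm sub}) \cong W_4^{(2)}$ of \cite{Gen, FS} combined with a direct singular-vector analysis at $k = -1$.

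For $\Phi$, I would take as candidate images of the $N=2$ generators inside $\mathcal W_{-1}(\mathfrak{sl}_4, f_{\rm sub}) \otimes \F_{-1}$ the fields $E = G^+ \otimes e^{\varphi^-}$ and $F = G^- \otimes e^{-\varphi^-}$ up to normalization, with the $u(1)$-current $J$ taken as a rescaled linear combination of $H$ and $\varphi^-$ under which $E$, $F$ carry charges $\pm 1$, and with the $N=2$ Virasoro defined as $L^{\mathcal W} + L^{\F_{-1}}$ corrected by a Heisenberg Sugawara shift giving $E$ and $F$ conformal weight $3/2$. Using $(G^{\pm})^2 = 0$ together with direct OPE computation, one verifies that $\{L^{N=2}, J, E, F\}$ satisfies the defining relations of the $N=2$ superconformal algebra with $c = -15$, the central charge matching automatically from the Sugawara shift and the known $c$ of $\mathcal W^{-1}(\mathfrak{sl}_4, f_{\rm sub})$ at $k = -1$. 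The inverse embedding $\Phi^{\rm inv}$ is constructed symmetrically inside $L^{N=2}_{c=-15} \otimes \F_1$: set $G^+ = E \otimes e^{-\varphi^+}$, $G^- = F \otimes e^{\varphi^+}$, recover $H$ and the additional weight-three bosonic generator of $W_4^{(2)}$ from the inverse Heisenberg shift of the $N=2$ data, and verify the Feigin--Semikhatov OPEs. Injectivity of both maps follows from simplicity of $\mathcal W_{-1}(\mathfrak{sl}_4, f_{\rm sub})$ and $L^{N=2}_{c=-15}$.

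For the coset assertions, I would define $H^{\perp} \in \mathcal W_{-1}(\mathfrak{sl}_4, f_{\rm sub}) \otimes \F_{-1}$ as the rank-one Heisenberg direction orthogonal (with respect to the Heisenberg bilinear form) to the image of $J$ under $\Phi$, and $\overline H \in L^{N=2}_{c=-15} \otimes \F_1$ as the analogous direction orthogonal to the image of $H$ under $\Phi^{\rm inv}$. The images of the two embeddings manifestly centralize the corresponding Heisenberg subalgebras; to upgrade these inclusions to equalities I would use the standard lattice-decomposition argument of \cite{A-IMRN, AK-2022}, decomposing each ambient tensor product as a direct sum of charged Fock modules over the commuting pair formed by the Heisenberg and its coset, and identifying the neutral component with the image of $L^{N=2}_{c=-15}$ (resp.\ $\mathcal W_{-1}(\mathfrak{sl}_4, f_{\rm sub})$) by a character/graded-dimension comparison.

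The main technical obstacle I anticipate is in the OPE verification: unlike the $n = 3$ Bershadsky--Polyakov case treated in \cite{AK-2022}, the algebra $W_4^{(2)}$ carries an additional weight-three bosonic generator beyond $L$, $H$, $G^{\pm}$, making the $G^+ G^-$ OPE substantially more intricate and requiring one to show that this extra field is absorbed cleanly into the $N=2$ data after the Heisenberg twist. A secondary subtlety is that $c = -15$ is not an admissible central charge, so the simplicity of $L^{N=2}_{c=-15}$ (invoked both for injectivity and for pinning down the image of $\Phi^{\rm inv}$) may need to be verified directly, for instance by exhibiting explicit singular vectors in the universal $N=2$ vertex algebra at this value.
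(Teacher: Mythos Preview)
Your overall strategy matches the paper's: explicit candidate generators in the tensor product, OPE verification using $(G^\pm)^2=0$, then a coset identification. Three points where your execution diverges from, or falls short of, the paper's are worth flagging.

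First, the weight-three field $W$ in $\Phi^{\rm inv}$. You propose to ``recover \dots\ the additional weight-three bosonic generator of $W_4^{(2)}$ from the inverse Heisenberg shift of the $N=2$ data,'' but a Heisenberg shift does not produce a new primary of weight three; $W$ commutes with $J$ and is untouched by such a shift. The paper's device is to first establish $\Phi$, deduce as a corollary that the parafermion cosets $\operatorname{Com}(M_H(1),L^{N=2}_{c=-15})$ and $\operatorname{Com}(M_J(1),\mathcal W_{-1}(\mathfrak{sl}_4,f_{\rm sub}))$ coincide, and then set $W=-\tfrac32\, W^{N=2}_{c=-15}$, the explicit weight-three parafermion generator built from $E$, $F$, $H$, $T$. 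This is exactly the missing idea you anticipate in your last paragraph; without it the $G^+G^-$ OPE check for $\Phi^{\rm inv}$ cannot close.

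Second, the coset identification. Your character/graded-dimension comparison is risky at $c=-15$ (non-admissible, so closed character formulas are not readily available). The paper instead proves a structural decomposition: writing $\varphi^- = -H - 4H^\perp$ gives $\Delta(n\varphi^-,z)=\Delta(-nH,z)\Delta(-4nH^\perp,z)$, so the cyclic $U$-modules $U\cdot e^{n\varphi^-}$ are identified with $\sigma^n(\operatorname{Im}\Phi)\otimes M_{H^\perp}(1,n)$ via Li's simple-current construction. One checks these exhaust all generators of $\mathcal W_{-1}\otimes\F_{-1}$ (e.g., $G^+=\tfrac32 E_{(-2)}e^{-\varphi^-}$), hence the direct-sum decomposition and the coset equality follow without any character input. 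The same argument runs for $\Phi^{\rm inv}$ with $\varphi^+ = J + \tfrac14\overline H$.

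Third, two of your anticipated obstacles dissolve more easily than you suggest. Simplicity of $V^{N=2}_{c=-15}$ is immediate: it is KS-dual to $V^{-5/3}(\mathfrak{sl}_2)$, which is simple by Gorelik--Kac, so no direct singular-vector hunt is needed. Likewise, $(G^+)^2$ being singular at $k=-1$ follows from Fehily's criterion ($i(k+n-1)=s$ with $i=1$, $n=4$, $s=2$), and the paper in fact obtains $(G^-)^2\in I_{\max}$ and $I_{\max}=\langle (G^+)^2,(G^-)^2\rangle$ as a byproduct of the decomposition above. Finally, a small sign: with $\langle\varphi^+,\varphi^+\rangle=+1$ you want $G^+=E\otimes e^{\varphi^+}$ (not $e^{-\varphi^+}$) to get $J$-charge $+1$ under $J=-\tfrac14 H+\tfrac54\varphi^+$.
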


{\bf Classification of irreducible $\boldsymbol{\mathcal{W}_k(\mathfrak{sl}(4), f_{\rm sub})}$-modules.} As a consequence of the duality, we are able to construct a realization of irreducible $\mathcal{W}_{k=-1}(\mathfrak{sl}_4,\allowbreak f_{\rm sub})$-modules as modules for~${L^{N=2}_c \otimes \F_{1}}$. Furthermore, the highest weight $\mathcal{W}_k(\mathfrak{sl}_4, f_{\rm sub})$-modules are parameterized by zeroes of certain curves (cf.\ Theorem~\ref{klas-1}).

 Let $L(x,y,z)$ be the irreducible highest weight $\mathcal{W}_k(\mathfrak{sl}_4, f_{\rm sub})$-module, generated by the highest weight vector $v_{x,y,z}$.
An important property here is that \smash{$(G^+)^2$} is a singular vector in~$\mathcal{W}^{-1}(\mathfrak{sl}_4,\allowbreak f_{\rm sub})$ (cf.\ Lemma \ref{sing_G^+}), hence the top space $L(x,y,z)_{\text{\rm top}}$ can be either 1- or 2-dimensional.
 Let
\begin{gather*}
 S_{1} = \left\{ \left(-\frac{h}{4}, q+ \frac{h^2}{8}, -\frac{1}{25} (h+5) \bigl(h^2 - 5h + 15 q^2\bigr)\right),\, (h,q) \in {\C} ^2\right\}, \\
 S_{2} = \left\{ \left(-\frac{h{-}5}{4}, q+ \frac{h^2 - 2 h + 5}{8}, -\frac{1}{25} (h+5) \bigl(h^2 - 5h + 15 q^2\bigr)\right),\, (h,q) \in {\C} ^2\right\} .
\end{gather*}

We prove the following statement.

\begin{Theorem}
The set $\{L(x,y,z), (x,y,z) \in S_1 \cup S_2 \}$ provides a complete list of irreducible, highest weight $\mathcal{W}_{k=-1}(\mathfrak{sl}_4, f_{\rm sub})$-modules. Moreover,
$
 \dim L(x,y,z)_{\rm top} = 1 \iff (x,y,z) \in S_1$.
\end{Theorem}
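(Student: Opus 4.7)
The plan is to transfer the classification of irreducible modules across the Kazama--Suzuki embedding
\[
\Phi^{\rm inv}\colon\ \mathcal{W}_{-1}(\mathfrak{sl}_4, f_{\rm sub}) \hookrightarrow L^{N=2}_{c=-15} \otimes \F_{1}
\]
just established. By the commutant description, every irreducible highest weight $\mathcal{W}_{-1}(\mathfrak{sl}_4, f_{\rm sub})$-module is recovered as a fixed $M_{\overline H}(1)$-weight subspace of a tensor product $M \otimes \F(\lambda)$, where $M$ is an irreducible (generically relaxed) highest weight $L^{N=2}_{c=-15}$-module and $\F(\lambda)$ is an irreducible $\F_{1}$-module. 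Such $M$ form a two-parameter family $M(h,q)$ indexed by the $L_0$- and $J_0$-eigenvalues $(h,q)\in\mathbb{C}^2$ of a cyclic vector $v_{h,q}$; this classification can be extracted either by direct Zhu-algebra analysis at $c=-15$ or, more conveniently, by combining the original $L^{N=2}_c$--$L_s(\mathfrak{sl}_2)$ Kazama--Suzuki duality (here with $s=-5/3$, the solution of $3s/(s+2)=-15$) with the representation theory of $\widehat{\mathfrak{sl}_2}$ at that level.

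Once the candidate family is in place, I would extract $(x,y,z)$ by expressing the three strong generators of $\mathcal{W}^{-1}(\mathfrak{sl}_4, f_{\rm sub})$ whose zero modes govern the Zhu-algebra action on the top space (the Virasoro $L$, the Heisenberg $H$, and the weight-three generator $W$) in terms of $L^{N=2}, J, G^\pm$ and the Heisenberg field of $\F_{1}$ via $\Phi^{\rm inv}$, and evaluating the appropriate zero modes on $v_{h,q}\otimes e^{\lambda\varphi}$. The commutant identity
$\mathcal{W}_{-1}(\mathfrak{sl}_4, f_{\rm sub}) \cong \operatorname{Com}\bigl(M_{\overline H}(1), L^{N=2}_{c=-15}\otimes \F_{1}\bigr)$
constrains $\lambda$ as a function of $(h,q)$, cutting the naive three-parameter set down to a two-parameter one. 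The cubic-in-$h$ and quadratic-in-$q$ shape of $z$ should then be a direct consequence of $\Phi^{\rm inv}(W)$ being a cubic polynomial in the $N{=}2$ fields.

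The dichotomy $S_1$ versus $S_2$ is dictated by Lemma~\ref{sing_G^+}: since $(G^+)^2$ is singular in $\mathcal{W}^{-1}(\mathfrak{sl}_4, f_{\rm sub})$, the zero mode $G^+_{(0)}$ squares to zero on every module, so each top space has dimension at most two, and both basis vectors share the same $z$-eigenvalue (because the generator producing $z$ commutes with $G^+_{(0)}$ modulo the singular vector). The family $S_1$ corresponds to those modules for which $v_{h,q}\otimes e^{\lambda\varphi}$ is annihilated by the pullback of $G^+_{(0)}$, yielding $x=-h/4$, $y=q+h^2/8$ and a one-dimensional top; the family $S_2$ corresponds to those modules whose chosen highest weight vector admits a nonzero partner $(G^+)_{(0)}(v_{h,q}\otimes e^{\lambda\varphi})$, with $x$ and $y$ shifted by the conformal weight and Heisenberg charge of $G^+$ (matching the observed shifts $+5/4$ in $x$ and $(5-2h)/8$ in $y$) while $z$ is preserved. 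Exhaustiveness follows by running the argument in reverse through the forward embedding $\Phi$, and irreducibility of each $L(x,y,z)$ is inherited from that of the associated $M(h,q)$. The main obstacle I anticipate is the explicit form of $\Phi^{\rm inv}(W)$, which will involve normally ordered cubic combinations of the $N{=}2$ fields, together with the polynomial bookkeeping needed to match the exact coefficients in $S_1$ and $S_2$; the qualitative shape of the answer is otherwise entirely dictated by the Kazama--Suzuki duality and the $(G^+)^2$ singular vector.
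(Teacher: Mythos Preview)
Your overall strategy --- construct candidate modules via $\Phi^{\rm inv}$, bound the top-space dimension using the singular vector, and prove exhaustiveness via $\Phi$ --- is the paper's strategy. But there is a genuine confusion in how you describe the top-space dichotomy, and it would derail the execution.

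With the grading by $\widetilde L(0)$ used here, $G^+$ has conformal weight~$1$ and $G^-$ has conformal weight~$3$. The highest-weight condition therefore forces $G^+(0)v_{x,y,z}=0$ for \emph{every} highest weight vector, so your criterion ``$S_1$ is where $v_{h,q}\otimes e^{\lambda\varphi}$ is annihilated by $G^+_{(0)}$'' is vacuous, and ``$S_2$ is where it admits a nonzero partner under $G^+_{(0)}$'' is impossible. The top space is spanned by $\{G^-(0)^i v_{x,y,z}:i\ge 0\}$; it is the relation $(G^-)^2=0$ (which also lies in the maximal ideal, by the $G^+\!\leftrightarrow\!G^-$ automorphism) that caps the dimension at~$2$, and the $S_1$/$S_2$ split is governed by whether $G^-(0)v_{x,y,z}=0$. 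Relatedly, the two basis vectors of a $2$-dimensional top space do \emph{not} share the same $J(0)$- or $W(0)$-eigenvalue; only the highest weight vector carries $(x,y,z)$.

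On the construction side, the paper does not sort modules by an annihilation condition but by lattice point: for $S_1$ one takes $v_{h,q}\otimes\mathbf{1}\in L_c[h,q]\otimes\F_1$, while for $S_2$ one takes $v_{h,q}\otimes e^{\varphi^+}$; one then checks directly that the latter is a $\mathcal W_{-1}$-highest weight vector with $G^-(0)(v_{h,q}\otimes e^{\varphi^+})=F(1/2)v_{h,q}\otimes\mathbf{1}$. For exhaustiveness the paper proceeds exactly as you suggest: tensor an arbitrary $L(x,y,z)$ with $\F_{-1}$ and restrict along $\Phi$, taking $v_{x,y,z}\otimes\mathbf{1}$ when the top space is $1$-dimensional and $v_{x,y,z}\otimes e^{\varphi^-}$ when it is $2$-dimensional; the constraint on~$z$ then drops out of the identification $W=-\tfrac{3}{2}W^{N=2}_{c=-15}$ already established in Proposition~\ref{ulaganje-2}, so no further computation of $\Phi^{\rm inv}(W)$ is needed.
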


{\bf Setup.}
\begin{itemize}\itemsep=0pt
	\item We adopt the following notation for the vertex operator corresponding to the state $a$
\[
Y (a, z) = \sum _{i \in \mathbb{Z}} a_{(i)}z^{-i-1}.
\]
\item The Zhu algebra associated to the vertex operator algebra $V$ with the Virasoro vector $\omega$ will be denoted with $A_{\omega}(V)$.
\item Let $\F_{\pm 1}$ be the lattice vertex {super}algebras associated to the lattice ${\mathbb Z}\sqrt{\pm 1}$ defined in Section \ref{F_{-1}}.
\item Let $M_{\alpha}(1)$ denote the rank one Heisenberg vertex algebra generated by a Heisenberg vector~$\alpha$.
\item Let $\mathcal{W}^k(\mathfrak{sl}_n, f_{\rm sub})$ be the (universal) subregular $\mathcal{W}$-algebra corresponding to the subregular nilpotent element $f_{\rm sub}$ and $\mathfrak g = \mathfrak{sl}_n$, and $\mathcal{W}_k(\mathfrak{sl}_n, f_{\rm sub})$ its simple quotient.
\item $L_s(\mathfrak{sl}_2)$ is affine Lie algebra of $\mathfrak{sl}_2$ and $N_s(\mathfrak{sl}_2)$ its parafermion (coset) subalgebra.
\item The universal $N=2$ superconformal vertex algebra of central charge $c$ is denoted by $V^{N=2}_c$ and $L^{N=2}_{c}$ is its simple quotient.
\end{itemize}

\section{Preliminaries}	

\subsection[Lattice vertex superalgebras F\_1]{ Lattice vertex superalgebras $\boldsymbol{\mathcal F_{\pm 1}}$} \label{F_{-1}}

Consider a rank one lattice $L= {\Z}\varphi^{\pm} $, $\big\langle \varphi^{\pm}, \varphi^{\pm} \big\rangle = \pm 1$. Let $\F_{1}$ (resp.\ $\F_{-1}$) be the associated vertex algebra. These vertex superalgebras are used for the construction of the inverse of Kazama--Suzuki functor in the context of duality between affine $\widehat{sl}_2$ and $N=2$ superconformal algebra (cf.\ \cite{A-IMRN,A-2001,FST}).

As a vector space $\F_{\pm 1} = {\C}[L] \otimes M_{\varphi^{\pm}} (1)$, where ${\C}[L]$ is a group algebra of $L$, and $M_{\varphi^{\pm}} (1)$ the Heisenberg vertex algebra generated by the Heisenberg field $\varphi^{\pm}(z) = \sum_{n \in {\Z} } \varphi^{\pm}(n) z^{-n-1}$ such that
\[
 \bigl[\varphi^{\pm}(n), \varphi^{\pm}(m)\bigr] = \pm n \delta_{n+m, 0}.
 \]

The vertex algebras $\F_{\pm 1}$ are weakly generated by \smash{$e^{ \varphi^{\pm}}$} and \smash{$e^{-\varphi^{\pm}}$}.
Moreover, $\F_{\pm 1}$ is a simple vertex superalgebra and a completely reducible $M_{\varphi^{\pm}} (1)$-module isomorphic to
\[
\F_{\pm 1} \cong \bigoplus_{m \in \mathbb{Z}} \F_{\pm 1}^m ,
\]
where $ \F_{\pm 1}^m$ is an irreducible $M_{\varphi^{\pm}} (1)$-module generated by \smash{$e^{m\varphi^{\pm}}$}.

\subsection[N=2 superconformal algebra]{$\boldsymbol{ N=2}$ superconformal algebra}

The universal $N=2$ superconformal vertex algebra $V^{N=2}_{c}$ is generated by even fields $T(z)$, $H(z)$ and odd fields $E(z)$, $F(z)$
satisfying the following OPEs
\begin{gather*}
 T(z)T(w) \sim \frac{\frac{c}{2}}{(z-w)^{4}}+\frac{2T(w)}{(z-w)^{2}} + \frac{\partial T(w)}{z-w}, \qquad H(x)H(y) \sim \frac{ \frac{c}{3}}{(z-w)^{2}}, \qquad \\
 T(z)H(w) \sim \frac{H(w)}{ (z-w)^{2}}+ \frac{\partial H(w)}{z-w}, \\
 T(z)E(w) \sim \frac{\frac{3}{2} E(w)}{(z-w)^{2}} + \frac{\partial E(w)}{z-w}, \qquad T(z)F(w) \sim \frac{\frac{3}{2} F(w)}{(z-w)^{2}} + \frac{\partial F(w)}{z-w}, \\
	E(z)F(w) \sim \frac{\frac{2c}{3}}{(z-w)^{3}}+\frac{2H(w)}{(z-w)^{2}} + \frac{2T(w) + \partial H(w)}{z-w}, \\
	F(z)E(w) \sim \frac{\frac{2c}{3}}{(z-w)^{3}}-\frac{2H(w)}{(z-w)^{2}} + \frac{2T(w) - \partial H(w)}{z-w}, \\
	H(z)E(w) \sim \frac{E(w)}{z-w}, \qquad H(z)F(w) \sim -\frac{F(w)}{z-w}, \qquad E(z)E(w) \sim 0, \qquad F(z)F(w) \sim 0. 	
\end{gather*}

Set
\begin{gather*}
 T(z) = \sum _{i \in {\Z}} T(i) z^{-i-2}, \qquad H(z) = \sum _{i \in {\Z}} H(i) z^{-i-1},\\
 E(z) = \sum _{i \in {\Z}} E_{(i)} z^{-i-1} = \sum _{i \in {\Z}} E\left(i+\frac{1}{2}\right)z^{-i-2},\\
 F(z) = \sum _{i \in {\Z}} F_{(i)} z^{-i-1} = \sum _{i \in {\Z}} F\left(i+\frac{1}{2}\right)z^{-i-2}.
 \end{gather*}
 Then the components of these fields satisfy the commutation relation for the $N=2$ superconformal algebra with basis $\{T(n), H(n), E(r), F(r)\}$, $n \in \mathbb{Z}$, $r \in \frac{1}{2}+\mathbb{Z}$,
\begin{gather*}
	[T(m), T(n)] = (m-n)T(m+n) + \frac{c}{12}\bigl(m^3 - m\bigr)\delta_{m+n,0}, \qquad [H(m), H(n)] = \frac{c}{3}m\delta_{m+n,0}, \\
	[T(m), H(n)] = -nH(n+m), \qquad [ T(m) , E(r) ] = \left(\frac{1}{2} m-r \right)E(m+r), \\
 [ T(m) , F(r) ] = \left(\frac{1}{2} m-r \right)F(m+r), \\
	[ H(m) ,E(r)] = E(m+r), \qquad [ H(m) , F(r)] = -F(m+r), \\
	\{E(r), F(s) \} = 2 T({r + s}) + ( r - s )H(r + s) + \frac{c}{3}\left( r^2 - \frac{1}{4} \right)\delta_{r+s,0}, \\
	\{E(r), E(s) \} = \{F(r), F(s) \} = 0.
\end{gather*}

 Let $L^{N=2}_{c}$ be the simple quotient of $V^{N=2}_{c}$.

\subsubsection{Spectral flow automorphisms}
The $N=2$ superconformal algebra $V^{N=2}_{c}$ admits a family of spectral flow automorphisms $\sigma^\ell$, $\ell \in \mathbb{Z}$, given by
\begin{align*}
\begin{aligned}
	& \sigma^\ell(T(n))= T(n) -\ell H(n) + \frac{1}{6}\ell ^2\delta_{n,0}c\mathbbm{1}, \qquad \sigma^l(H(n)) = H(n) + \frac{1}{3}\ell \delta_{n,0}c\mathbbm{1},  \\
	& \sigma^\ell(E(r))= E({r-\ell}), \qquad \sigma^\ell(F(r)) = F({r+\ell}), \qquad \sigma^\ell(\mathbbm{1}) = \mathbbm{1}.
\end{aligned}
\end{align*}

Define
\[
 \Delta (-H,z) = z^{-H(0)}\exp \left ( \sum _{k=1} ^{\infty} (-1)^{k+1}\frac{-H(k) }{kz^k}\right ).
 \]

For any $V^{N=2}_{c}$-module $(M, Y_M(\cdot, z))$ and $n \in \mathbb{Z}$,
$\sigma^ n (M)$ is again a $L^{N=2}_{c}$-module with vertex operator structure given by
$ Y_{\sigma ^n (M) } ( \cdot , z)) := Y_{ M } (\Delta ( -n H, z) \cdot , z))$.

\subsubsection{Twisted highest weight conditions}
Let us define a new Virasoro vector $\widetilde T = T + \partial H$. Then $E(z)$ is primary field of conformal weight~$1/2$ and $F(z)$ of conformal weight $5/2$ with respect to $\widetilde T$.
 With respect to the new grading operator $\widetilde T(0)$ we shall also need to introduce twisted highest weight conditions.
 Let~${(h,q) \in {\C}^2}$. Let $M_c[h,q]$ be the Verma module of the $N=2$ superconformal algebra of central charge $c$ generated by the (twisted) highest weight vector $v_{h,q}$ such that for $n \in {\Z}_{\ge 0}$ (cf.\ \cite{FST}):
\begin{gather*}
 H(n) v_{h,q} =\delta_{n, 0} h v_{h,q}, \qquad T(n) v_{h,q} = \delta_{n,0} q v_{h,q}, \\ E\left(n-\frac{1}{2}\right) v_{h,q} = F\left(n+\frac{3}{2}\right) v_{h,q} =0.
 \end{gather*}
 Let $L_c[h,q]$ be the irreducible quotient of $M_c[h,q]$. Then $L_c[h,q]$ is an irreducible $V_c^{N=2}$-module.

\subsubsection{Parafermionic subalgebras}
\label{paraferm-1}
Let $ \mathcal N_c = \operatorname{Com} \bigl(M_H(1) , L^{N=2}_{c}\bigr) $ be the parafermionic subalgebra of $L^{N=2}_{c}$. Note that using the Kazama--Suzuki duality between $L^{N=2} _c$ and $L_k(\mathfrak{sl}_2)$ (cf.\ \cite{A-IMRN, FST}), we have that
$\mathcal N_c = \mathcal N_s (\mathfrak{sl}_2)$, where $c =\frac{3s}{s+2}$ and
$\mathcal N_s (\mathfrak{sl}_2)$ is the parafermion vertex subalgebra of $L_s(\mathfrak{sl}_2)$ (cf.\ \cite{DLY}). If $c \notin \{ 0, 1, \frac{3}{2}, -6, -9\}$, then the vertex algebra $ \mathcal N_c $ is (weakly) generated by the fields $ T^{\perp}$ and~$W^{N=2}_{c}$ (cf.\ \cite{BEHHH,DLY}), where
\begin{gather*}
	 T^{\perp} = T - \frac{3}{2c}{:}HH{:}, \qquad
	W^{N=2}_{c} = \nu \left({:} E F{:} - \partial T -\frac{6}{c}{:} T H{:} - \frac{c-9}{3c}\partial^2H + \frac{6}{c^2}{:}H^3{:} \right),
\end{gather*}
and $\nu \in \mathbb{C}$ is a normalization factor. In particular, $\mathcal N_{c=-15}$ is weakly generated by $T^{\perp}$ and $W^{N=2}_{c=-15}$.

 For other central charges, we have the following:
\begin{itemize}\itemsep=0pt
\item If $c\in \{ 0, 1\}$, then $\mathcal N_c = {\C} {\bf 1}$ (cf.\ \cite{DLY}).
\item $N_{c=3/2}$ is the simple Virasoro vertex algebra of central charge $c=\frac{1}{2}$.
 \item \smash{$\mathcal N_{c=-6}=\mathcal N_{-4/3}(\mathfrak{sl}_2)$} is the singlet vertex algebra $\mathcal M(3)$ generated by the Virasoro field and primary field of conformal weight $5$ (cf.\ \cite{A-2005}).
 \item \smash{$\mathcal N_{c=-9} =\mathcal N_{-3/2} (\mathfrak{sl}_2)$} is a direct sum of \smash{$\mathcal W_{-5/2} (\mathfrak{sl}_3, f_{\rm pr})$}-modules (cf.\ \cite{AMW}). As a vertex algebra it is generated by \smash{$\mathcal W_{-5/2} (\mathfrak{sl}_3, f_{\rm pr})$} and its simple module of conformal weight $4$.
\end{itemize}

 Let $A_{\omega}(V)$ be the Zhu algebra associated to the VOA $V$ with the Virasoro vector $\omega$, and let~$[v]$ be the image of $v \in V $ under the mapping $V \mapsto A_{\omega}(V)$.

 One can show that the Zhu's algebra $A\bigl(V^{N=2}_{c}\bigr)$ (cf.\ \cite{A-IMRN}) is isomorphic to ${\mathbb C}[x,y]$; i.e., there is isomorphism $f \colon A\bigl(V^{N=2}_{c}\bigr) \rightarrow {\mathbb C}[x_1,y_1]$ such that
$[H({-1}){\mathbbm 1}] \mapsto x_1, \ [T({-2}{)\mathbbm 1}] \mapsto y_1$.

\subsection{Kazama--Suzuki duality}
In this subsection, we will define a duality of vertex algebras which is motivated by the duality between $N=2$ superconformal vertex algebra and affine vertex algebra $L_k(\mathfrak{sl}_2)$.

Recall first that if $S$ is a vertex subalgebra of $V$, we have the commutant subalgebra of $V$ (cf.\ \cite{LL})
\[
\operatorname{Com} (S, V):= \{ v \in V \mid a_n v = 0,\, \forall a \in S,\, \forall n \in {\Z}_{\ge 0}\}.
\]

Assume that $U$, $V$ are vertex superalgebras. We say that $V$ is the Kazama--Suzuki dual of~$U$ if there exist injective homomorphisms of vertex superalgebras
$
 \varphi_1 \colon V \rightarrow U \otimes \F_1$, $ \varphi_2 \colon U \rightarrow V \otimes \F_{-1}$,
so that
$
V \cong \operatorname{Com} \left( M_{H_1} (1), U \otimes \F_1\right)$, $ U \cong \operatorname{Com} \left( M_{H_2} (1), V \otimes \F_{-1}\right)$,
where $M_{H_1} (1) $ (resp.\ $ M_{H_2} (1)$) is a rank one Heisenberg vertex subalgebra of
$U\otimes \F_1$ (resp.\ $V\otimes \F_{-1}$), and $\F_{\pm 1}$ are lattice vertex superalgebras defined in Section~\ref{F_{-1}}.

\subsection[Subregular W-algebras W\^{}k(sl\_4, f\_sub]{Subregular $\boldsymbol{\mathcal{W}}$-algebras $\boldsymbol{\mathcal{W}^k(\mathfrak{sl}_4, f_{\rm sub})}$}	\label{subreg_ope}

Let $\mathfrak{g}$ be a simple finite-dimensional Lie algebra and $k \in \mathbb{C}$. Let $\mathcal{W}^k(\mathfrak{g}, f_{\rm sub})$ be the (universal) subregular $\mathcal{W}$-algebra corresponding to the subregular nilpotent element $f_{\rm sub}$ (\cite{KW}).

In the case where $\mathfrak g = \mathfrak{sl}_n$, it was proved in \cite{Gen} that $\mathcal{W}^k(\mathfrak{g}, f_{\rm sub})$ is isomorphic to the Feigin--Semikhatov algebra \smash{$W_n^{(2)}$} (cf.\ \cite{FS}), using a certain free field realization of $\mathcal{W}^k(\mathfrak{g}, f_{\rm sub})$.

In this paper, we will only consider the case $\mathfrak g = \mathfrak{sl}_4$. The vertex algebra $\mathcal{W}^{k}(\mathfrak{sl}_4, f_{\rm sub})$ has central charge
\[
c_k = -\frac{(3k+8)(8k+17)}{(k+4) },
\]
 and it is freely generated by fields $J(z)$, $L(z)$, $G^+ (z)$, $G^- (z)$, $W(z)$. The explicit OPEs are known (see \cite{CL18,FS}) and are written in Appendix~\ref{appendix-b}. In this paper, we shall assume that the gradation on~$\mathcal{W}^{k}(\mathfrak{sl}_4, f_{\rm sub})$ is defined by the shifted Virasoro field $\widetilde L = L + \partial J$. Then $G^+$ (resp.~$G^-$) is a~primary field of conformal weight $1$ (resp.~$3$). We set
\begin{gather*}
J(z) = \sum _{n \in {\Z}} J(n) z^{-n-1}, \qquad L(z) = \sum _{n \in {\Z}} L(n) z^{-n-2}, \qquad G^+ (z) = \sum _{n \in {\Z}} G^+(n) z^{-n-1}, \\ G^- (z) = \sum _{n \in {\Z}} G^-(n) z^{-n-3}, \qquad W (z) = \sum _{n \in {\Z}} W(n) z^{-n-3}.
\end{gather*}	
The Zhu algebra $A\bigl(\mathcal{W}^{k}(\mathfrak{sl}_4, f_{\rm sub})\bigr)$ is generated by
\[
 E=[G^+],\qquad F=[G^- ], \qquad X=[J ], \qquad Y =[L], \qquad Z=[W].
 \]

The subregular $\mathcal{W}$-algebra $\mathcal{W}^k(\mathfrak{sl}_4, f_{\rm sub})$ admits a family of spectral flow automorphisms $\psi^m$, $m \in \mathbb{Z}$, given by
\begin{gather*}
 \psi^m (J(n)) = J(n)- { \frac{m (3 k +8)}{4} }
	 \delta _{n,0}\mathbbm{1} , \qquad \psi^m ( L(n)) = L(n) - mJ(n) + \frac{m^2 { (3k+8)}}{{8} }\delta _{n,0}\mathbbm{1}, \\
 \psi^m (G^+(n)) = G^+(n-m), \qquad \psi^m (G^-(n)) = G^-(n+m).
 \end{gather*}

Define
 \[
 \Delta (-J,z) = z^{-J(0)}\exp \left ( \sum _{k=1} ^{\infty} (-1)^{k+1}\frac{-J(k)}{kz^k}\right ).
 \]

For any $\mathcal{W}^k(\mathfrak{sl}_4, f_{\rm sub})$-module $(M, Y_M(\cdot, z))$ and $n \in \mathbb{Z}$,
$\psi^m (M)$ is again a $\mathcal{W}^k(\mathfrak{sl}_4, f_{\rm sub})$-module with vertex operator structure given by
$ Y_{\psi^m (M) } ( \cdot , z)) := Y_{ M } (\Delta ( -m J, z) \cdot , z))$.

\subsection[Singular vectors in W\^{}k(sl\_n, f\_sub)]{Singular vectors in $\boldsymbol{\mathcal{W}^k(\mathfrak{sl}_n, f_{\rm sub})}$}
	
	Formulas for singular vectors in $\mathcal{W}^k(\mathfrak{sl}_n, f_{\rm sub})$ are not known in general; however, the following criterion from \cite{Feh} tells us when $(G^+)^s$ is singular.
	
	\begin{Proposition}[\cite{Feh}]\label{kriterij}
		The vector $(G^+)^s$, $s>0$, is singular in $\mathcal{W}^k(\mathfrak{sl}_n, f_{\rm sub})$ if and only if~${
		 	i(k+n-1)=s}
$
 for some $i \in \{1,\dots ,n-1 \}$.
	\end{Proposition}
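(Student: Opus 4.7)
The plan is to exploit the free field (Wakimoto-type) realization of $\mathcal{W}^k(\mathfrak{sl}_n, f_{\rm sub}) \cong W_n^{(2)}$ due to Feigin--Semikhatov and Genra \cite{Gen,FS}. In that realization $G^+(z)$ is represented by a single vertex-operator exponential $e^{\alpha(z)}$ for a distinguished Heisenberg element $\alpha$, whose norm and pairings with the other free bosons depend linearly on $k$. Consequently $(G^+)^s = G^+_{(-1)}(G^+)^{s-1}$ becomes proportional to $e^{s\alpha(z)}$, whose conformal weight and $J(0)$-charge can be read off directly from the inner products $\langle \alpha,\alpha \rangle$ and $\langle \alpha, J\rangle$.

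The next step is to translate singularity of $(G^+)^s$ into a resonance condition inside the Fock space. The subregular $\mathcal{W}$-algebra is obtained from the free field algebra as the joint kernel of a collection of screening charges $Q_1, \dots, Q_{n-1}$, each being the residue of a screening current $e^{\chi_i(z)}$ with $\langle \chi_i,\chi_i\rangle$ a simple linear function of $k+n$. A state of $\mathcal{W}^k(\mathfrak{sl}_n, f_{\rm sub})$ that is a highest weight vector for the ambient Heisenberg is singular in the vacuum module precisely when it is annihilated by every $Q_i$; conversely every new singular vector in $\mathcal{W}^k$ is detected by such a screening. Since $e^{s\alpha}$ is automatically a Heisenberg highest weight vector, it suffices to compute $Q_i\cdot e^{s\alpha}$ for $i=1,\dots,n-1$.

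The action of $Q_i$ on a bosonic exponential is standard: it vanishes exactly when the pairing $\langle s\alpha, \chi_i\rangle$ satisfies a specific integer resonance, which after substituting the Feigin--Semikhatov normalization of $\alpha$ and $\chi_i$ collapses to the single equation $s = i(k+n-1)$ with $i \in \{1,\dots,n-1\}$. At such values of $k$ every $Q_i$ annihilates $e^{s\alpha}$, producing a genuine singular vector $(G^+)^s$ of $\mathcal{W}^k(\mathfrak{sl}_n, f_{\rm sub})$; at any other value some resonance fails and a nontrivial descent of $(G^+)^s$ (produced by the screenings or equivalently by positive modes of $G^-$) is visible, so $(G^+)^s$ is not singular.

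The principal obstacle is performing the inner-product book-keeping in a normalization compatible with the OPEs of $\mathcal{W}^k(\mathfrak{sl}_n, f_{\rm sub})$ from Appendix~\ref{appendix-b}, so that the resonance comes out with exactly the coefficient $k+n-1$ and exactly the index range $i\in\{1,\dots,n-1\}$ rather than a shifted variant. A more hands-on route bypassing free fields entirely is to induct on $s$ using the explicit $G^-(z)G^+(w)$ OPE of \cite{CL18,FS}: one evaluates the leading positive-mode action $G^-_{(m)}(G^+)^s$ and obtains a polynomial in $k$ whose factorization has zero set exactly $\{k : i(k+n-1)=s,\ 1\le i\le n-1\}$; the vanishing of the remaining positive modes of $J,L,W$ at those values is then forced by conformal-weight and $J(0)$-charge considerations.
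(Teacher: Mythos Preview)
The paper does not prove this proposition; it is quoted verbatim from \cite{Feh} and used as a black box. So there is no ``paper's proof'' to compare against, only Fehily's original argument.

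Your sketch contains a genuine conceptual error. You write that a Heisenberg highest-weight vector in $\mathcal{W}^k(\mathfrak{sl}_n,f_{\rm sub})$ is singular in the vacuum module ``precisely when it is annihilated by every $Q_i$''. But the screening charges $Q_i$ are what \emph{cut out} the $\mathcal W$-algebra inside the free-field space: every element of $\mathcal{W}^k(\mathfrak{sl}_n,f_{\rm sub})$, singular or not, already lies in $\bigcap_i \ker Q_i$ by definition. In particular $(G^+)^s$ is automatically annihilated by all screenings for every $k$, so this condition cannot distinguish the special values $i(k+n-1)=s$ from generic ones. Singularity of $(G^+)^s$ is instead the statement that the positive modes of the $\mathcal W$-algebra generators (most importantly $G^-$) kill it, and that is not encoded by the $Q_i$.

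Fehily's actual argument uses the inverse quantum Hamiltonian reduction embedding $\mathcal{W}^k(\mathfrak{sl}_n,f_{\rm sub}) \hookrightarrow \mathcal{W}^k(\mathfrak{sl}_n,f_{\rm prin})\otimes \Pi$, where $\Pi$ is a half-lattice vertex algebra. There $G^+$ is sent to a pure null exponential $\mathbbm{1}\otimes e^{c}$ with $\langle c,c\rangle=0$, so $(G^+)^s$ really is $\mathbbm{1}\otimes e^{sc}$; the nontrivial step is then computing the action of the positive modes of $G^-$, which on that side involves the principal $\mathcal W$-algebra generators, and this is where the factorisation producing the roots $k=-n+1+s/i$, $i=1,\dots,n-1$, appears. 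Your ``hands-on'' alternative via the $G^-G^+$ OPE is closer in spirit to this, but note that for general $n$ those OPEs are not written down in closed form (Appendix~\ref{appendix-b} is only $n=4$), so an inductive OPE computation would itself require the free-field input you were trying to avoid.
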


Using the above criterion, it is easy to observe the following.
	\begin{Lemma} \label{sing_G^+}\qquad
	\begin{itemize}\itemsep=0pt
		\item[$(i)$] For $k=-n+3$, $(G^+)^2$ is singular in $\mathcal{W}^k(\mathfrak{sl}_n, f_{\rm sub})$ but $G^+$ is not.
		\item[$(ii)$] For $k=-n+4$, $(G^+)^3$ is singular in $\mathcal{W}^k(\mathfrak{sl}_n, f_{\rm sub})$ but $(G^+)^2$ is not.	
	\end{itemize}
	\end{Lemma}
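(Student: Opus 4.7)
The plan is to apply Proposition \ref{kriterij} directly: for each prescribed value of $k$, substitute into the equation $i(k+n-1)=s$ and read off which powers $s$ of $G^+$ are singular by determining when $s$ is divisible by $k+n-1$ with quotient in the admissible range $\{1,\ldots,n-1\}$.

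For part $(i)$, setting $k=-n+3$ gives $k+n-1=2$, so the criterion becomes $2i=s$ with $i \in \{1,\ldots,n-1\}$. The choice $i=1$ yields $s=2$, so $(G^+)^2$ is singular in $\mathcal{W}^k(\mathfrak{sl}_n, f_{\rm sub})$. On the other hand, $s=1$ would require $2i=1$, which has no integer solution, so $G^+$ itself is not singular.

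For part $(ii)$, setting $k=-n+4$ gives $k+n-1=3$, and the criterion becomes $3i=s$ with $i \in \{1,\ldots,n-1\}$. Taking $i=1$ shows that $(G^+)^3$ is singular, while neither $s=1$ nor $s=2$ is an integer multiple of $3$, so neither $G^+$ nor $(G^+)^2$ is singular.

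The only implicit assumption to keep in mind is that $n\ge 2$, so that $i=1$ genuinely lies in the allowed range; this is automatic for $\mathfrak{sl}_n$. There is no substantive obstacle here — the lemma is a purely arithmetic corollary of the criterion of \cite{Feh}, and the only thing to check is that the excluded powers indeed fail the divisibility condition.
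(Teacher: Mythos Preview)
Your proof is correct and follows exactly the same approach as the paper: both apply Proposition~\ref{kriterij} by substituting $k=-n+3$ (respectively $k=-n+4$) to obtain $k+n-1=2$ (respectively $3$), then check that $i=1$ gives the desired singular power while the smaller powers fail the divisibility condition.
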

	
	\begin{proof}
		(i) Assume that $(G^+)^2$ is singular in $\mathcal{W}^k(\mathfrak{sl}_n), f_{\rm sub})$ but $G^+$ is not. Then from Proposition \ref{kriterij} it follows that there exists $i \in \{1,\dots ,n-1 \}$ such that $ i(k+n-1)=2 $ and there does not exist $i \in \{1,\dots ,n-1 \}$ such that $ i(k+n-1)=1$. Let now $k=-n+3$. Then $i=1$ is solution of the first equation, and the second equation does not have solutions. This proves (i).
		Analogous reasoning yields (ii).
	\end{proof}
	
\subsection[Highest weight W\^{}k(sl\_4, f\_sub)-modules]{Highest weight $\boldsymbol{\mathcal W^k(\mathfrak{sl}_4, f_{\rm sub})}$-modules}

\begin{Definition} Let $(x,y,z) \in \mathbb{C}^3$. We say that a module $M$ is a highest weight $ \mathcal{W}^k(\mathfrak{sl}_4, f_{\rm sub})$-module of highest weight $(x,y,z)$ if there exist a highest weight vector $v_{x,y,z}$ such that
\begin{gather*}
 J(0)v_{x,y,z} = x v_{x,y,z}, \qquad J(n)v_{x,y,z} = 0 \qquad \text{for} \quad n>0, \\
 L(0)v_{x,y,z} = y v_{x,y,z}, \qquad L(n)v_{x,y,z} = 0 \qquad \text{for} \quad n>0, \\
 W(0)v_{x,y,z} = z v_{x,y,z}, \qquad W(n)v_{x,y,z} = 0 \qquad \text{for}\quad n>0, \\
 G^{+}(n-1)v_{x,y,z} = G^{-}(n)v_{x,y,z} =0 \qquad \text{for} \quad n\geq 1,
\end{gather*}
and $M= \mathcal{W}^k(\mathfrak{sl}_4, f_{\rm sub}). v_{x,y,z}$.
\end{Definition}

 If an irreducible highest weight module of highest weight $(x,y,z)$ exists, it is unique up to equivalence, and we denote it by $L(x,y,z)$.
 It is ${\Z}_{\ge 0}$ graded with respect to $\widetilde L(0)$ and its top space
 \[
 L(x,y,z)_{\text{\rm top}} = \big\lbrace v \in L(x,y,z) \mid \widetilde L(0)v = \widetilde y v \big\rbrace \]
 is spanned by $ \big\{G^-(0)^iv_{x,y,z}, \, i \geq 0 \big\}$.

 \begin{Remark} Note that $\widetilde L(0) \vert_{ L(x,y,z)_{\text{\rm top}}} \equiv \widetilde y \operatorname{Id}$, where $\widetilde y = y-x$. But since all OPE formulas in Appendix~\ref{appendix-b} are expressed in terms of the original Virasoro field $L$ we choose to express all highest weight with respect to
 $(J(0), L(0), W(0))$. One could also easily rewrite highest weights with respect to $\bigl(J(0), \widetilde L(0), W(0)\bigr)$.
\end{Remark}

\begin{Remark} One can show that for each $(x,y,z) \in \mathbb{C}^3$, there exist an irreducible highest weight $ \mathcal{W}^k(\mathfrak{sl}_4, f_{\rm sub})$-module with these highest weights using the results of~\cite{KW} and/or inverse reduction construction from \cite{Feh}. But in the current paper we do not need this general result. In what follows, we shall construct highest weight modules in some special cases which are relevant to KS duality.
\end{Remark}

\section[Classification of Kazama--Suzuki duality between the N=2 superconformal algebra L\^{}N=2\_c and the subregular W-algebra W\_k(sl\_4, f\_sub]{Classification of Kazama--Suzuki duality between\\ the $\boldsymbol{N=2}$ superconformal algebra $\boldsymbol{L^{N=2}_c}$\\ and the subregular $\boldsymbol{\mathcal{W}}$-algebra $\boldsymbol{\mathcal{W}_{k}(\mathfrak{sl}_4, f_{\rm sub})}$} \label{KS_gen}

In this section, we classify all possible occurrences of Kazama--Suzuki duality between the $N=2$ superconformal algebra $L^{N=2}_c$ and the subregular algebra $\mathcal{W}$-algebra $\mathcal{W}_{k}(\mathfrak{sl}_4, f_{\rm sub})$.

In order for those two algebras to be in KS duality, two necessary conditions need to be fulfilled: (i) coset subalgebras need to coincide and (ii) $(G^+)^2$ needs to be a singular vector in~$\mathcal W^k(\mathfrak{sl}_4, f_{\rm sub})$.

To check the first condition, we use a criterion from \cite{Linshaw}, which determines when certain quotients of the universal two-parameter vertex algebra $\mathcal W (c, \lambda)$ coincide (cf.\ Section \ref{univ_W}). For the second criterion, we use Lemma \ref{sing_G^+}.

\subsection[Universal two-parameter vertex algebra W (c, lambda)]{Universal two-parameter vertex algebra $\boldsymbol{\mathcal W (c, \lambda)}$} \label{univ_W}
 The universal two-parameter vertex algebra $\mathcal W (c, \lambda)$ was constructed in \cite{Linshaw}. It is defined over the ring $\mathbb{C}[c, \lambda]$ and freely generated by Virasoro field $L$ of central charge $c$ and strong generators~${\big\{W^i \mid i \geq 3\big\}}$ of weight $i \in \mathbb{Z}$. $\mathcal W (c, \lambda)$ is a simple vertex algebra over $\mathbb{C}[c, \lambda]$.

 Let $I \subseteq \mathbb{C}[c, \lambda]$ be an ideal. Then we have the quotient vertex algebra
 \[
 \mathcal W ^{I}(c, \lambda) = \mathcal W (c, \lambda)/ I \cdot \mathcal W (c, \lambda).
 \]
 The variety $V(I)\subseteq \mathbb{C}^2 $ is called a truncation curve for $\mathcal W ^{I}(c, \lambda)$.

 Many important examples of $\mathcal W$-algebras are quotients of (localizations of) the algebra $\mathcal W (c, \lambda)$, including the principal $\mathcal W$-algebra $\mathcal W^k(\mathfrak{sl}_n, f_{\rm prin})$, the parafermion subalgebra
 \[
 \mathcal N_s(\mathfrak{sl}_2) = \operatorname{Com} (M_H(1) , L_s(\mathfrak{sl}_2))
 \]
 of $L_s(\mathfrak{sl}_2)$ and the coset algebra $ \mathcal C_k = \operatorname{Com} (M_H(1) ,\mathcal W_k(\mathfrak{sl}_4, f_{\rm sub})) $ of $\mathcal W_k(\mathfrak{sl}_4, f_{\rm sub})$.

 If $I$ is a maximal ideal of the form $I = (c-c_0, \lambda - \lambda_0)$, for some $c_0, \lambda_0 \in \mathbb{C}$, then $\mathcal W ^{I}(c, \lambda)$ and its simple quotient $\mathcal W_{I}(c, \lambda)$ are vertex algebras over $\mathbb{C}$. Given maximal ideals $I_0 = (c-c_0, \lambda - \lambda_0)$ and $I_1 = (c-c_1, \lambda - \lambda_1)$, let $\mathcal W_0$ and $\mathcal W_1$ be the simple quotients of $\mathcal W^{I_0} (c, \lambda)$ and $\mathcal W^{I_1} (c, \lambda)$.
 The following criterion describes when they are isomorphic.

\begin{Proposition}[{\cite[Corollary 10.1]{Linshaw}}]
	Let $\mathcal W_0$ and $\mathcal W_{1}$ be the simple quotients of $\mathcal W^{I_0} (c, \lambda)$ and~$\mathcal W^{I_1} (c, \lambda)$. Then if $c \neq 0,-2$, $\mathcal W_0$ and $\mathcal W_{1}$ are isomorphic only if $c_0 = c_1$ and $\lambda_0 = \lambda_1$, hence any pointwise coincidences between the simple quotients of $\mathcal W^{I_0} (c, \lambda)$ and $\mathcal W^{I_1} (c, \lambda)$ must correspond to intersection points of the truncation curves $V(I_0) \cap V(I_1)$.
\end{Proposition}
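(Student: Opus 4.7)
The plan is to show that both parameters $c$ and $\lambda$ can be recovered intrinsically from the simple quotient vertex algebra, so that any isomorphism $\mathcal W_0 \cong \mathcal W_1$ forces $(c_0,\lambda_0)=(c_1,\lambda_1)$. To recover $c$, one observes that the image of the Virasoro field $L \in \mathcal W(c,\lambda)$ in the simple quotient is a Virasoro vector of central charge $c_j$, and this central charge is preserved by any vertex algebra isomorphism provided the conformal vector of $\mathcal W_j$ is essentially unique. The exclusions $c \neq 0,-2$ are precisely what rule out the degenerate situations at which $L$ can collapse or at which alternative conformal structures can coexist (the $c=-2$ case reflecting the well-known $\beta\gamma$-type Virasoro phenomenon). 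Granted uniqueness, one gets $c_0 = c_1$.

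To recover $\lambda$, one exploits the fact that in $\mathcal W(c,\lambda)$ the OPE coefficients among the strong generators $\{L\} \cup \{W^i\}_{i\geq 3}$ are polynomials in $\mathbb{C}[c,\lambda]$. By the explicit construction in \cite{Linshaw}, for each fixed $c \neq 0,-2$ one can single out a ``$\lambda$-marker'' structure constant (e.g., an appropriate coefficient in the OPE $W^3(z)W^3(w)$ or $W^3(z)W^4(w)$) whose dependence on $\lambda$ is of positive polynomial degree. The rescaling freedom of the strong generators only multiplies this marker by a nonzero constant, so its value in the simple quotient is an intrinsic invariant up to a nonzero scalar. Comparing markers on the two sides of a putative isomorphism then forces $\lambda_0 = \lambda_1$.

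The main obstacle is the possibility that some generators $W^i$ become null in the simple quotient, in which case the chosen $\lambda$-marker may vanish identically and the direct comparison fails. This is exactly the situation handled by the final clause: the vanishing of any $W^i$ imposes an algebraic relation on $(c,\lambda)$, cutting out a truncation curve $V(I_j)$, and two simple quotients of $\mathcal W^{I_0}(c,\lambda)$ and $\mathcal W^{I_1}(c,\lambda)$ can coincide pointwise only if the corresponding parameter points lie simultaneously on both truncation curves. Hence such coincidences must correspond to intersection points of $V(I_0) \cap V(I_1)$, completing the plan.
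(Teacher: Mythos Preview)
The paper does not give its own proof of this proposition; it is simply quoted as \cite[Corollary~10.1]{Linshaw} and used as a black box in Section~\ref{KS_gen}. There is therefore no argument in the paper against which to compare your sketch.

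That said, your outline captures the right intuition behind Linshaw's result---the parameters $c$ and $\lambda$ are encoded in the OPE structure constants of $L$ and $W^3$, and once one knows these survive to the simple quotient (this is where the exclusions on $c$ enter), an isomorphism must preserve them up to the rescaling freedom. Your third paragraph, however, misreads the logical structure of the statement. In the proposition as set up here, $I_0=(c-c_0,\lambda-\lambda_0)$ and $I_1=(c-c_1,\lambda-\lambda_1)$ are \emph{maximal} ideals, so $V(I_0)$ and $V(I_1)$ are single points, not curves. The ``hence'' clause about truncation curves is not an escape hatch for the degenerate case where some $W^i$ becomes null; it is the straightforward consequence obtained by now letting $I_0,I_1$ be non-maximal ideals cutting out genuine curves and applying the first assertion pointwise: a coincidence of simple quotients at parameter values $(c_0,\lambda_0)$ on $V(I_0)$ and $(c_1,\lambda_1)$ on $V(I_1)$ forces $(c_0,\lambda_0)=(c_1,\lambda_1)$, i.e., the point lies in $V(I_0)\cap V(I_1)$. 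So the final clause is a corollary of the first, not a separate case analysis.
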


\subsection[Coincidences between parafermion algebras of L\_s(sl\_2) and W\_k(sl\_4, f\_sub) and applications]{Coincidences between parafermion algebras of $\boldsymbol{ L_s(\mathfrak{sl}_2)}$ \\
and $\boldsymbol{\mathcal W_k(\mathfrak{sl}_4, f_{\rm sub})}$ and applications}

Since $L^{N=2}_c$ and $L_s(\mathfrak{sl}_2)$ are in Kazama--Suzuki duality for $c =\frac{3s}{s+2}$ (cf.\ \cite{A-IMRN}), we know that their coset subalgebras will coincide. Therefore, we can use the above criterion in order to determine when the coset algebras $ \mathcal N_s(\mathfrak{sl}_2)$ and $\mathcal C_k$ are isomorphic.

\begin{Proposition} \label{pf_coincidence}
	The parafermionic algebras $ \mathcal N_s(\mathfrak{sl}_2)$ and $\mathcal C_k$ coincide only in the following cases:
	\begin{enumerate}\itemsep=0pt
		\item[$(1)$] $k=-1$ and $s=-\frac 53$;
		\item[$(2)$] $k=-\frac 32$ and $s=-\frac 75$;
		\item[$(3)$] $k=-\frac{13}4$ and $s=-\frac 74$;	
		\item[$(4)$] $k=-\frac 83$ and $s= 0$;	
		\item[$(5)$] $k=-\frac 52$ and $s=1$;	
		\item[$(6)$] $k=-\frac 73$ and $s=1$;	
		\item[$(7)$] $k=-\frac{11}4$ and $s=-\frac 12$.
	\end{enumerate}
\end{Proposition}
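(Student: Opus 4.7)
The plan is to apply the criterion recalled just above from \cite[Corollary 10.1]{Linshaw}: both $\mathcal{N}_s(\mathfrak{sl}_2)$ and $\mathcal{C}_k$ are simple quotients of the universal two-parameter vertex algebra $\mathcal{W}(c,\lambda)$, so a coincidence between them is only possible when the central charge $c$ and the structure parameter $\lambda$ agree for the two specialisations. Thus the problem reduces to finding all simultaneous solutions of the pair of equations
\[
c(s) = c(k), \qquad \lambda(s) = \lambda(k),
\]
where $(c(s),\lambda(s))$ parametrises the truncation curve of $\mathcal{N}_s(\mathfrak{sl}_2)$ and $(c(k),\lambda(k))$ the truncation curve of $\mathcal{C}_k$.

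First, I would write down explicit rational expressions for the two curves. On the parafermion side the central charge is $c(s)=\frac{3s}{s+2}-1=\frac{2(s-1)}{s+2}$, and the function $\lambda(s)$ coincides with the one computed for $\mathcal{N}_s(\mathfrak{sl}_2)$ in \cite{Linshaw}. On the $\mathcal{W}_k(\mathfrak{sl}_4,f_{\rm sub})$ side, using $c_k=-\frac{(3k+8)(8k+17)}{k+4}$ we obtain $c(k)=c_k-1$, while $\lambda(k)$ is the structure constant of the coset $\mathcal{C}_k$ which can be read off from the OPEs in Appendix~\ref{appendix-b} (computing, e.g., a weight-six normally-ordered coefficient of the generator $W^4$ of $\mathcal{W}(c,\lambda)$ in terms of the fields $J$, $L$, $G^\pm$, $W$). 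Each of $c(s),\lambda(s),c(k),\lambda(k)$ is an explicit rational function of one variable.

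Next I would eliminate to obtain a system of two polynomial equations in $(k,s)$. The $c$-equation $\frac{2(s-1)}{s+2}=-\frac{(3k+8)(8k+17)}{k+4}-1$ is bilinear in $s$ after clearing denominators, so one can solve for $s$ as a rational function of $k$ (or vice versa) and substitute into the $\lambda$-equation. This produces a single polynomial equation $P(k)=0$ whose roots are the admissible values of $k$; symbolic factorisation of $P(k)$ over $\mathbb{Q}$ should yield the seven listed values $k\in\{-1,-3/2,-13/4,-8/3,-5/2,-7/3,-11/4\}$, and back-substitution into the rational expression for $s$ returns the matching parameters $s\in\{-5/3,-7/5,-7/4,0,1,1,-1/2\}$ respectively. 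Finally, one must verify that the excluded central charges $c=0,-2$ (where the criterion from \cite{Linshaw} does not apply) do not occur at any of these intersection points, and one must also check that none of the candidate $(k,s)$ falls into one of the degenerate loci in Section~\ref{paraferm-1} for which $\mathcal{N}_c$ has a different set of strong generators; both are straightforward numerical checks.

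The main obstacle is the bookkeeping: producing the explicit expressions for $\lambda(s)$ and $\lambda(k)$ in a normalisation compatible with $\mathcal{W}(c,\lambda)$, and carrying out the elimination and factorisation cleanly enough that the resulting finite list is exhaustive. There is no conceptual difficulty beyond this, since the Linshaw criterion reduces the classification of coincidences to the intersection of two explicit rational curves in the plane $\mathbb{C}^2_{c,\lambda}$.
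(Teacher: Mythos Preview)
Your overall strategy---reduce to the intersection of the two truncation curves via \cite[Corollary~10.1]{Linshaw}---is the same as the paper's, but there is a genuine gap in how you handle the exceptional central charges $c=0$ and $c=-2$.

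You assert that the polynomial $P(k)$ will factor to give all seven values of $k$, and that one then merely checks that $c\in\{0,-2\}$ ``does not occur'' at these points. This is wrong on both counts. Equating the two explicit pairs $(c,\lambda)$ from \cite[Theorems~7.1 and~7.3]{Linshaw} yields only \emph{five} intersection points, corresponding to cases (1)--(5); moreover case~(5), $(k,s)=(-5/2,1)$, already has $c=0$, so the Linshaw criterion does not justify it. Cases (6) and (7) do \emph{not} lie on the intersection of the curves at all. They arise only from a separate, direct analysis at $c=0$ (where $s=1$ and one must examine both $k=-5/2$ and $k=-7/3$) and at $c=-2$ (where $s=-1/2$ and one must examine both $k=-11/4$ and $k=-2$). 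In each of these four degenerate cases you have to identify the two cosets explicitly---for instance, showing that $\mathcal C_{-7/3}\cong\mathcal N_1(\mathfrak{sl}_2)\cong\mathbb C\mathbbm 1$ because $\mathcal W_{-7/3}(\mathfrak{sl}_4,f_{\rm sub})\cong V_{2\mathbb Z}$, that both $\mathcal C_{-11/4}$ and $\mathcal N_{-1/2}(\mathfrak{sl}_2)$ are the singlet algebra $\mathcal M(2)$, and that $\mathcal C_{-2}\cong L^{\rm Vir}_{c=-2}\not\cong\mathcal N_{-1/2}(\mathfrak{sl}_2)$, so $(k,s)=(-2,-1/2)$ is \emph{not} a coincidence. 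Without this case-by-case work you neither obtain (6) and~(7) nor rule out the spurious point $(-2,-1/2)$.

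A minor point: computing $\lambda(k)$ from the OPEs in Appendix~\ref{appendix-b} is feasible but unnecessary; the formula is already given in \cite[Theorem~7.3]{Linshaw}.
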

\begin{proof}
	By \cite[Corollary 10.1]{Linshaw}, aside from the cases
$c=0,-2$,
all other isomorphisms of simple quotients $\mathcal N_s(\mathfrak{sl}_2) \cong \mathcal C_k $ correspond to intersection points
 of the corresponding truncation curves parameterizing $\mathcal N_s(\mathfrak{sl}_2)$ and $\mathcal C_k $.

The parafermionic subalgebra $\mathcal N_s(\mathfrak{sl}_2) = \operatorname{Com} (M_H(1) , L_s(\mathfrak{sl}_2)) $ is obtained as a simple quotient of $\mathcal W (c, \lambda)$ by setting (cf.\ \cite[Theorem 7.1]{Linshaw})
\begin{equation} \label{N_s}
	c = \frac{2(s - 1)}{s + 2}, \qquad \lambda = \frac{s + 1}{(s -2)(3s + 4)},
\end{equation}
while $ \mathcal C_k = \operatorname{Com} (M_H(1), \mathcal W_k(\mathfrak{sl}_4, f_{\rm sub})) $ of $\mathcal W_k(\mathfrak{sl}_4, f_{\rm sub})$ is obtained by setting (cf.\ \cite[Theorem~7.3]{Linshaw})
\begin{equation} \label{C_k}
	c = -\frac{4(5 + 2k)(7 + 3k)}{4 + k}, \qquad \lambda = - \frac{(3 + k)(4 + k)}{3(2 + k)^{2}(16 + 5k)}.
\end{equation}

 Equating \eqref{N_s} and \eqref{C_k} shows that there are exactly five such points $(k, s)$, namely,
 \[
 \left( -\frac{13}{4}, -\frac{7}{4}\right),\qquad \left( -\frac{8}{3}, 0\right),\qquad \left( -\frac{5}{2}, 1\right),\qquad \left( -\frac{3}{2}, -\frac{7}{5}\right),\qquad \left( -1, -\frac{5}{3}\right).
 \]

 It remains to check the cases in which the parafermionic subalgebras are not obtained as quotients of the universal $\mathcal W$-algebra $\mathcal W (c, \lambda)$, that is, for $c=0$ and $c=-2$.

 If $c=-2$, we have $(k,s) = \bigl(-2, -\frac{1}{2}\bigr)$ or $(k,s) = \bigl(-\frac{11}{4}, -\frac{1}{2}\bigr)$.
 If $(k,s) = \bigl(-\frac{11}{4}, -\frac{1}{2}\bigr)$, then the parafermionic subalgebras $\mathcal N_s(\mathfrak{sl}_2)$ and $\mathcal C_k $ are both isomorphic to the singlet algebra $\mathcal M(2)$ (cf.~\cite{Ridout,Wang}). If $(k,s) = \bigl(-2, -\frac{1}{2}\bigr)$, then $\mathcal W_{-2}(\mathfrak{sl}_4, f_{\rm sub})\cong L^{\rm Vir}_{c=-2} \otimes \mathcal H$ (this can be seen either directly from the OPEs by renormalizing the field \smash{$\widetilde W:= (k+2)W$}, or from the result of \cite[Section 5.4]{CFLN}), hence $\mathcal C_{-2} \cong L^{\rm Vir}_{c=-2} $. But $\mathcal N_{-1/2}(\mathfrak{sl}_2) \cong \mathcal W_{-2}(\mathfrak{sl}_3, f_{\rm prin})$, hence the parafermionic subalgebras do not coincide.

 If $c=0$, then we have $(k,s) = \bigl(-\frac{5}{2}, 1\bigr)$ or $(k,s) = \bigl(-\frac{7}{3}, 1\bigr)$. If $(k,s) = \bigl(-\frac{5}{2}, 1\bigr)$, then $\mathcal W_{-5/2}(\mathfrak{sl}_4, f_{\rm sub})$ is isomorphic to the Heisenberg vertex algebra $M(1)$ (cf.\ \cite{APV}) and $\mathcal N_{1}(\mathfrak{sl}_2) \cong \mathbb{C} \mathbbm{1}$. If $(k,s) = \bigl(-\frac{7}{3}, 1\bigr)$, then $W_{-7/3} (\mathfrak{sl}_4, f_{\rm sub})$ is isomorphic to the rank one lattice vertex algebra~$V_{2 \Z}$ (cf.\ \cite[Theorem~5.5.]{CFLN}), which implies $\mathcal C_{-7/3} \cong \mathcal N_{1}(\mathfrak{sl}_2) \cong \mathbb{C} \mathbbm{1}$.
\end{proof}

\begin{Remark}
	Two of the cases from Proposition \ref{pf_coincidence} are in fact collapsing levels: for $(k,s) = \bigl(-\frac{8}{3},0\bigr)$ it was shown in \cite{AMP-IMRN} that $\mathcal W_{-8/3}(\mathfrak{sl}_4, f_{\rm sub}) \cong \mathbb{C}\mathbbm{1}$, therefore the parafermionic subalgebras are 1-dimensional.
 If $(k,s) = \bigl(-\frac{5}{2}, 1\bigr)$, then $\mathcal W_{-5/2}(\mathfrak{sl}_4, f_{\rm sub})$ is isomorphic to the Heisenberg vertex algebra $M(1)$ (cf.\ \cite{APV}), implying that the parafermionic subalgebras are 1-dimensional.
\end{Remark}

\begin{Remark}
	The coincidences from Proposition \ref{pf_coincidence} can also be checked directly from the explicit expressions for OPEs of $\mathcal N_s(\mathfrak{sl}_2)$ from \cite{DLY}.
\end{Remark}

\subsection{Classification of KS dualities}

Using the Kazama--Suzuki duality of $L_s(\mathfrak{sl}_2)$ and $L^{N=2}_c$ (cf.\ \cite{A-IMRN}), where $c =\frac{3s}{s+2}$, we have the following.

\begin{Theorem}
	$L^{N=2}_c$ and $\mathcal W_k(\mathfrak{sl}_4, f_{\rm sub})$ are in Kazama--Suzuki duality if and only if $k=-1$ and $c=-15$ or $k=-\frac{7}{3}$ and $c=1$.
\end{Theorem}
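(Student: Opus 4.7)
The strategy is to intersect the two necessary conditions flagged in the introduction — coincidence of Heisenberg cosets and singularity of $(G^+)^2$ — and then to verify sufficiency for each surviving pair by direct construction of the KS embeddings.

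For necessity, if $L^{N=2}_c$ and $\mathcal{W}_k(\mathfrak{sl}_4, f_{\rm sub})$ are KS-dual then their Heisenberg cosets must coincide; since $L^{N=2}_c$ is itself KS-dual to $L_s(\mathfrak{sl}_2)$ at $c = 3s/(s+2)$, this forces $\mathcal{C}_k \cong \mathcal{N}_s(\mathfrak{sl}_2)$, and Proposition \ref{pf_coincidence} narrows $(k,s)$ to the seven candidates listed there. Independently, the calculation in the introduction shows that $(G^+)^2 = 0$ must hold in the simple quotient, so by Proposition \ref{kriterij} with $n=4$ there must exist $i \in \{1,2,3\}$ with $i(k+3) \in \{1,2\}$. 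I would scan the seven candidates of Proposition \ref{pf_coincidence} against this integrality condition; the only survivors are $k \in \{-1, -8/3, -5/2, -7/3\}$, with associated central charges $c = -15, 0, 1, 1$ respectively.

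Next I would eliminate the two collapsing levels. At $k = -8/3$ one has $\mathcal{W}_k(\mathfrak{sl}_4, f_{\rm sub}) \cong \mathbb{C}\mathbbm{1}$, and at $k = -5/2$ one has $\mathcal{W}_k(\mathfrak{sl}_4, f_{\rm sub}) \cong M(1)$ (a pure Heisenberg vertex algebra), as recorded in the remark after Proposition \ref{pf_coincidence}. In both cases the odd generators $G^\pm$ collapse to zero in the simple quotient, so no injective $\varphi_2 \colon L^{N=2}_c \rightarrow \mathcal{W}_k(\mathfrak{sl}_4, f_{\rm sub}) \otimes \F_{-1}$ can carry the fermionic $E$ to a nonzero element of the required form $G^+ \otimes e^{\varphi^-}$. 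This leaves precisely the two pairs $(k,c) = (-1,-15)$ and $(-7/3, 1)$.

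For sufficiency I would exhibit the embeddings $\varphi_1, \varphi_2$ and verify the commutant identifications at each of these two pairs. The ansatz sends $E \mapsto G^+ \otimes e^{\varphi^-}$, and one then builds the images of $F, H, T$ out of $G^-, J, W, L$ with appropriate $\varphi^-$-corrections, verifying the $N=2$ OPEs directly; the inverse $\varphi_1$ is constructed dually inside $L^{N=2}_c \otimes \F_1$. For $(k,c) = (-1,-15)$ this is exactly the content of Propositions \ref{ulaganje-1} and \ref{ulaganje-2} below, while the pair $(-7/3, 1)$ reduces to combining $\mathcal{W}_{-7/3}(\mathfrak{sl}_4, f_{\rm sub}) \cong V_{2\mathbb{Z}}$ with the standard free-field realization of $L^{N=2}_{c=1}$. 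The main obstacle is the OPE verification in the $(-1,-15)$ case: one must pin down the correct shifted Heisenberg combinations of $J, H$ and $\varphi^{\pm}$ so that the ansatz reproduces the full set of $N=2$ relations, and this computation relies essentially on the singular-vector identity $(G^+)^2 = 0$ from Lemma \ref{sing_G^+}(i) together with the parafermion coincidence supplied by condition~I.
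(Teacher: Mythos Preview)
Your strategy mirrors the paper's: intersect the coset-coincidence list from Proposition~\ref{pf_coincidence} with the singularity constraint on $(G^+)^2$, then construct the embeddings for the survivors. The paper is slightly more economical at the intersection step: it applies Proposition~\ref{kriterij} only with the literal condition ``$(G^+)^2$ is a singular vector'' (i.e., $i(k+3)=2$ for some $i\in\{1,2,3\}$), which among the seven candidates singles out $k=-1$ and $k=-7/3$ directly, with no separate treatment of the collapsing levels.

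Your broader reading of the constraint (allowing $G^+$ itself to be singular, hence picking up $k=-8/3$ and $k=-5/2$ as well) is arguably more faithful to what ``$(G^+)^2=0$ in the simple quotient'' means, but the pruning argument you offer for those two levels is not airtight. At $k=-8/3$ one has $c=0$ and $L^{N=2}_{c=0}\cong\mathbb C\mathbbm 1$, so $E$ itself is already zero; the objection ``no injective $\varphi_2$ can carry $E$ to a nonzero element'' is vacuous there, and in fact $\mathbb C$ is trivially KS-dual to $\mathbb C$ under the abstract definition (this degenerate case is tacitly set aside). At $k=-5/2$ your argument presupposes that $E$ must land on a multiple of $G^+\otimes e^{\varphi^-}$, which is the heuristic from the introduction rather than a consequence of the commutant definition; a cleaner exclusion is to check that no rank-one Heisenberg inside $M(1)\otimes\F_{-1}$ has commutant isomorphic to $\F_3=L^{N=2}_{c=1}$, since the only lattice directions available have the wrong norm. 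Your sufficiency sketch for the two surviving pairs---invoking Propositions~\ref{ulaganje-1} and~\ref{ulaganje-2} for $(-1,-15)$ and the lattice identifications $\mathcal W_{-7/3}(\mathfrak{sl}_4,f_{\rm sub})\cong\F_4$, $L^{N=2}_{c=1}\cong\F_3$ for $(-7/3,1)$---is exactly what the paper does.
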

\begin{proof}
Necessary conditions for $L^{N=2}_c$ and $\mathcal W_k(\mathfrak{sl}_4, f_{\rm sub})$ to be in Kazama--Suzuki duality are~(i) their corresponding coset subalgebras need to coincide and (ii) \smash{$(G^+)^2$} needs to be a singular vector in $\mathcal W^k(\mathfrak{sl}_4, f_{\rm sub})$ \big(in order for generators to satisfy the relations for the $L^{N=2}_c$ algebra\big),
 hence the only possibilities are those listed in Proposition \ref{pf_coincidence}. Among those, it follows easily from Proposition \ref{kriterij} that $(G^+)^2$ is a singular vector in $\mathcal W^k(\mathfrak{sl}_4, f_{\rm sub})$ only in the cases $(k,s) = \bigl(-1,-\frac{5}{3}\bigr)$ \big(which corresponds to $\mathcal W_{-1}(\mathfrak{sl}_4, f_{\rm sub})$ and $L^{{N=2}}_{c=-15}$\big), and $(k,s) = \bigl(-\frac{7}{3}, 1\bigr)$ \big(which corresponds to $\mathcal W_{-7/3}(\mathfrak{sl}_4, f_{\rm sub})$ and $L^{N=2}_{c=1}$\big).

Kazama--Suzuki duality between $L^{N=2}_{c=-15}$ and $\mathcal W_{-1}(\mathfrak{sl}_4, f_{\rm sub})$ will be studied in Section \ref{duality}. Let us now discuss the case of $\mathcal W_{-7/3}(\mathfrak{sl}_4, f_{\rm sub})$ and $L^{N=2}_{c=1}$.

We claim that there exist embeddings
\[
\Phi\colon\ L^{N=2}_{c=1} \rightarrow \mathcal{W}_{-7/3}(\mathfrak{sl}_4, f_{\rm sub}) \otimes \F_{-1}, \qquad \Phi^{\rm inv} \colon\ \mathcal{W}_{-7/3}(\mathfrak{sl}_4, f_{\rm sub}) \rightarrow L^{N=2}_{c=1} \otimes \F_{1},
\]
such that
 \begin{align*}
 	L^{N=2}_{c=1} \cong \operatorname{Com} (\mathcal{H}_1, \mathcal{W}_{-7/3}(\mathfrak{sl}_4, f_{\rm sub}) \otimes \F_{-1}), \qquad
 	\mathcal{W}_{-7/3}(\mathfrak{sl}_4, f_{\rm sub}) \cong \operatorname{Com} \bigl(\mathcal{H}_2, L^{N=2}_{c=1} \otimes \F_{1}\bigr),
 \end{align*}
 where $\mathcal{H}_1$ and $\mathcal{H}_2$ are Heisenberg vertex algebras.

But this follows easily from the fact that $\mathcal{W}_{-7/3}(\mathfrak{sl}_4, f_{\rm sub}) \cong \F_4$ (cf.\ \cite{CFLN}) and $ L^{N=2}_{c=1} \cong \F_3$ (cf.~\cite{K1}), where $\F_n = V_{\mathbb{Z}\alpha}$ is the lattice vertex algebra associated to the lattice $L= {\Z}\alpha $, $\langle \alpha , \alpha \rangle = n$.
\end{proof}

\section[The duality of W\_-1(sl\_4, f\_sub) and L\^{}N=2\_c=-15]{The duality of $\boldsymbol{\mathcal{W}_{-1}(\mathfrak{sl}_4, f_{\rm sub})}$ and $\boldsymbol{L^{N=2}_{c=-15}}$} \label{duality}

As we have seen in Section \ref{KS_gen}, Kazama--Suzuki duality between $L^{N=2}_c$ and $\mathcal W_k(\mathfrak{sl}_4, f_{\rm sub})$ can only occur if $k=-1$ and $c=-15$ or $k=-\frac{7}{3}$ and $c=1$.
In this section, we consider the case~${k=-1}$. Then the central charge of $\mathcal{W}^{k}(\mathfrak{sl}_4, f_{\rm sub})$ is $c = -15$. According to Lemma \ref{sing_G^+}, \smash{$(G^+)^2$} is singular vector in $\mathcal{W}^{k}(\mathfrak{sl}_4, f_{\rm sub})$, hence \smash{$(G^+)^2 = 0$} in $\mathcal{W}_{k}(\mathfrak{sl}_4, f_{\rm sub})$.

 We will show that $\mathcal{W}_{-1}(\mathfrak{sl}_4, f_{\rm sub})$ is the Kazama--Suzuki dual of the $N=2$ superconformal vertex algebra of central charge $-15$.

\subsection[Embedding of L\^{}N=2\_c=-15 into W\_-1(sl\_4, f\_ ub)]{Embedding of $\boldsymbol{ L^{N=2}_{c=-15}}$ into $\boldsymbol{\mathcal{W}_{-1}(\mathfrak{sl}_4, f_{\rm sub}) \otimes \F_{-1}}$}

First, we will show that the $N=2$ superconformal algebra $L^{N=2}_{c}$ for $c=-15$ can be realized as a subalgebra of $\mathcal{W}_{-1}(\mathfrak{sl}_4, f_{\rm sub}) \otimes \F_{-1}$, where $\F_{\pm 1}$ is the vertex algebra associated to the rank one lattice $L= {\mathbb{Z}}\varphi^{\pm}$, with $\big\langle \varphi^{\pm}, \varphi^{\pm} \big\rangle = \pm 1$ (cf.\ Section \ref{F_{-1}}).

 In this section, we shall use that fact that the universal superconformal $N=2$ vertex algebra~$V^{N=2}_{c}$ is simple for $c=-15$, since it is the Kazama--Suzuki dual of the universal affine vertex algebra $V^{k=-5/3}(\mathfrak{sl}_2)$ (which is also simple by \cite{GK}).

 Note that the maximal ideal in $\mathcal W^k(\mathfrak{sl}_4, f_{\rm sub})$ is invariant under the automorphism which maps $G^+$ to $G^-$. Since \smash{$(G^+)^2$} is a singular vector in $\mathcal{W}^{-1}(\mathfrak{sl}_4, f_{\rm sub}) $, we conclude that $(G^-)^2$ also belongs to the maximal ideal of $\mathcal{W}^{-1}(\mathfrak{sl}_4, f_{\rm sub}) $. Now we want to show that the maximal ideal is generated by these two vectors.
Let \smash{$I = \bigl\langle (G^+)^2, (G^-)^2 \bigr\rangle$} and define
\[
 \widetilde W = \mathcal{W}^{-1}(\mathfrak{sl}_4, f_{\rm sub})/I.
 \]

Let $H^{\perp} = J + \varphi^-$. Then for $n \ge 0$, we have
\[
H^{\perp}(n) H^{\perp} = \frac{1}{4} \delta_{n,1}\mathbbm{1}.
\]
Let $M_{H^{\perp}} (1)$ be the Heisenberg vertex algebra generated by $H^{\perp}$, and $M_{H^{\perp}} (1, s)$ the irreducible highest weight $M_{H^{\perp}} (1)$-module on which
$H^{\perp}(0) \equiv s \operatorname{Id}$.

\begin{Proposition} \label{ulaganje-1}\qquad
\begin{itemize}\itemsep=0pt
\item[$(1)$]There is a vertex algebra homomorphism $\Phi\colon L^{N=2}_{c=-15} \rightarrow \widetilde W \otimes \F_{-1}$ given by
	\begin{gather*}
		E = \frac{2}{3} G^+ \otimes e^{\varphi^-}, \qquad
		F = - G^- \otimes e^{-\varphi^-},\qquad
		H = - 4 J - 5 \varphi^-, \\
		T = L- 2{:}J J {:}-4{:}J\varphi^-{:} -\frac{5}{2}{:}(\varphi^-)^2{:}.
	\end{gather*}
	\item[$(2)$]
	Let $H^{\perp} = J + \varphi^-$. Then
\[
 \widetilde W \otimes \F_{-1} \cong \bigoplus_{n \in {\mathbb{Z}}} \sigma^n \bigl(L^{N=2}_{c=-15}\bigr) \otimes M_{H^\perp} (1, n).
 \]
	\item[$(3)$] We have
\[
\operatorname{Im} (\Phi) \cong \operatorname{Com} (M_{H^{\perp}} (1), \mathcal{W}_{-1}(\mathfrak{sl}_4, f_{\rm sub}) \otimes \F_{-1}) \cong L^{N=2}_{c=-15} .
\]
\item[$(4)$] $\widetilde W = \W_{-1} (\mathfrak{sl}_4, f_{\rm sub})$, i.e., $I$ is the maximal ideal in $\W^{-1}(\mathfrak{sl}_4, f_{\rm sub})$.
\end{itemize}
	
\end{Proposition}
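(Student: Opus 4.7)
The plan is to prove the four parts in sequence, with (3) and (4) emerging from (1) and (2). For part (1), the approach is a direct OPE verification: using the explicit OPEs of $\mathcal W^{-1}(\mathfrak{sl}_4, f_{\rm sub})$ from Appendix~\ref{appendix-b} together with the Heisenberg relations for $\varphi^-$ and the standard lattice product rules for $e^{\pm\varphi^-}$, each $N=2$ OPE is checked in turn. The two delicate relations $E(z)E(w)\sim 0$ and $F(z)F(w)\sim 0$ reduce via Wick-type manipulations to $(G^+)^2 \otimes e^{2\varphi^-} = 0$ and $(G^-)^2 \otimes e^{-2\varphi^-}=0$, which hold in $\widetilde W \otimes \F_{-1}$ because $(G^\pm)^2 \in I$ by construction. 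The remaining OPEs are routine once one has pinned down the coefficients so that the central charge equals $-15$, $E,F$ carry $H$-charges $\pm 1$, both are primary of conformal weight $3/2$, and the anticommutator $\{E(r),F(s)\}$ reproduces $2T+(r-s)H+\cdots$. Since $V^{N=2}_{c=-15}$ is simple (inherited from simplicity of its KS-dual $V^{-5/3}(\mathfrak{sl}_2)$ as noted just before the proposition), the verification automatically gives an injective homomorphism from $L^{N=2}_{c=-15}$.

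For part (2), the first step is to verify that $H^\perp = J+\varphi^-$ generates a rank one Heisenberg subalgebra with self-contraction $1/4$, and that it commutes with the image of $\Phi$: this amounts to a short OPE check against each of $\Phi(H),\Phi(T),\Phi(E),\Phi(F)$. Consequently $\widetilde W \otimes \F_{-1}$ is a module over $\operatorname{Im}(\Phi) \otimes M_{H^\perp}(1)$, and it is naturally graded by the integer spectrum of $H^\perp(0)$. I would identify the $n$-th weight space with $\sigma^n(L^{N=2}_{c=-15}) \otimes M_{H^\perp}(1,n)$ by exhibiting $\mathbbm{1}\otimes e^{n\varphi^-}$ as a highest-weight vector and checking via the spectral-flow formulas recalled earlier that the action of $\operatorname{Im}(\Phi)$ on this vector reproduces the twisted module structure. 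Part (3) is then immediate: the commutant of $M_{H^\perp}(1)$ sits inside the $n=0$ weight space $\operatorname{Im}(\Phi)\otimes M_{H^\perp}(1)$, and taking the commutant of $M_{H^\perp}(1)$ inside that tensor product returns exactly $\operatorname{Im}(\Phi) \cong L^{N=2}_{c=-15}$.

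For part (4), the strategy is to deduce maximality of $I$ from the decomposition in (2) together with the simplicity of $L^{N=2}_{c=-15}$ and $\F_{-1}$. Any ideal $\mathcal J \subseteq \widetilde W$ gives an $M_{H^\perp}(1)$-invariant ideal $\mathcal J \otimes \F_{-1}$ in $\widetilde W \otimes \F_{-1}$, which must split as a direct sum of some of the irreducible summands $\sigma^n(L^{N=2}_{c=-15})\otimes M_{H^\perp}(1,n)$. Because $e^{\pm\varphi^-}$ acts as an intertwiner shifting the $n$-grading by $\pm 1$, once one summand belongs to $\mathcal J \otimes \F_{-1}$ they all do, and hence $\mathcal J = 0$ or $\mathcal J = \widetilde W$. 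The main obstacle I anticipate is part (2), specifically establishing the full decomposition rather than merely an inclusion: showing that no additional summands appear requires a careful character/generator argument, using that $\mathcal W^{-1}(\mathfrak{sl}_4,f_{\rm sub})$ is freely generated by $J, L, G^+, G^-, W$ and that modding out by $(G^+)^2,(G^-)^2$ removes precisely those monomials quadratic in $G^\pm$. The OPE bookkeeping in (1) is tedious but routine.
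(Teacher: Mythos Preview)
Your plan for parts (1), (3), and (4) is essentially what the paper does, and is fine. The real issue is part (2), and you have correctly flagged it yourself: establishing the \emph{full} decomposition, not merely the inclusion $\bigoplus_n \sigma^n(L^{N=2}_{c=-15})\otimes M_{H^\perp}(1,n) \subseteq \widetilde W\otimes\F_{-1}$. Your proposed fix --- a character argument based on ``modding out by $(G^+)^2,(G^-)^2$ removes precisely those monomials quadratic in $G^\pm$'' --- does not work. The ideal $I$ is not spanned by such monomials; it is the two-sided ideal \emph{generated} by $(G^\pm)^2$, and hence contains all vectors obtained by hitting $(G^\pm)^2$ with arbitrary modes of $J,L,W,G^\mp$. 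You do not know the character of $\widetilde W$ in advance, so you cannot compare it to the right-hand side.

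The paper sidesteps this entirely by proving the reverse inclusion via generators. Let $U=\operatorname{Im}(\Phi)\otimes M_{H^\perp}(1)$ and $U^{(n)}=U\cdot e^{n\varphi^-}$; one checks using $\varphi^-=-H-4H^\perp$ and Li's $\Delta$-operator that $U^{(n)}\cong\sigma^n(L^{N=2}_{c=-15})\otimes M_{H^\perp}(1,n)$, so $\mathcal U:=\bigoplus_{n}U^{(n)}$ is a simple-current extension and in particular a vertex subalgebra. The key observation is the pair of identities
\[
G^+=\tfrac{3}{2}\,E_{(-2)}e^{-\varphi^-},\qquad G^-=-\,F_{(-2)}e^{\varphi^-},
\]
which are immediate from the definitions of $E,F$ and the lattice products. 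Since $G^\pm$ (together with $e^{\pm\varphi^-}$) weakly generate $\widetilde W\otimes\F_{-1}$, these identities force $\mathcal U=\widetilde W\otimes\F_{-1}$, giving the decomposition without any character bookkeeping. This also makes (4) a one-liner: the right-hand side is a simple vertex algebra (simple-current extension of a simple VOA), hence so is $\widetilde W\otimes\F_{-1}$, hence so is $\widetilde W$.
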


\begin{proof}

It is easy to check that
\begin{gather*}
 H(0) E = E, \qquad H(0) F =-F, \qquad H(n) E = H(n) F =0, \qquad n >0, \\
 H(1) H = - 5 {\mathbbm 1} = \frac{c}{3} {\mathbbm 1},\qquad H(n) H =0, \qquad n >1.
\end{gather*}
 Let $L^{\perp} = L- \frac{2}{5} {:}J J{:}$ (cf.\ Appendix \ref{appendix-b}). We get that
\[
 T = L^{\perp} -\frac{1}{10} {:}H H{:}.
 \]
This implies that $T$ is a Virasoro vector of central charge $c=-15$. Clearly, we have that $H$ is a primary vector for $T$ of conformal weight $1$.

 Next we notice that $\bigl(G^\pm \bigr)^2 =0$ in $\widetilde W$. Direct calculation shows that
 \begin{gather*}
 E_{(0)} E = \frac{4}{9} {\bigl( G^+ \bigr)}^2 \otimes e^{2 \varphi^-} =0, \qquad F_{(0)}F = {( G^-)}^2 \otimes e^{-2 \varphi^-} =0, \\
 E_{(n)} E = F_{(n)} F =0, \qquad n\in {\mathbb Z}_{\ge 0},
\end{gather*}
and
\begin{gather*}
	E_{(2)} F = -\frac{2 (2 + k)(5 + 2k)(8 + 3k)}{3} {\mathbbm 1} =(-10) {\mathbbm 1} = \frac{2 c}{3} {\mathbbm 1}, \\
E_{(1)} F = -\frac{ 8(2 + k)(5 + 2k)}{3} J - \frac{2 (2 + k)(5 + 2k)(8 + 3k)}{3} \varphi^- = -8 J - 10 \varphi^- = 2 H, \\
 E_{(0)} F = - \frac{2 (2 + k)(5 + 2k)(8 + 3k)}{3}\cdot\frac{1}{2}\bigl({:}( \varphi^-)^2{:}+\partial\varphi^-\bigr)-\frac{ 8(2 + k)(5 + 2k)}{3} {:}J \varphi^-{:} \\
 \phantom{E_{(0)} F =}{}+\frac{2(k+2) \bigl((k+4)L-6{:}J^2{:}-2(2k+5) \partial J\bigr)}{3} = 2T + \partial H.
\end{gather*}

The above relations show that the even fields $H$, $ T$ and the odd fields $E$, $F$ satisfy the $\lambda$-bracket for the $N=2$ superconformal vertex algebra of central charge $c=-15$. Since \smash{$V^{N=2} _{c=-15}$} is simple, we conclude that $H$, $T$, $E$, $F$ generate a vertex subalgebra of $\widetilde W \otimes \F_{-1}$ isomorphic to~$L^{N=2}_{c=-15}$. This proves the assertion~(1).

 Let \smash{$ \overline W = \operatorname{Ker} _{ \widetilde W\otimes F_{-1}} H^{\perp}(0)$}. Then $\overline{W}$ is a vertex algebra which contains $ \operatorname{Im} (\Phi)\otimes M_{H^{\perp}} (1)$.

Let us prove the following claim.
\begin{Claim}\label{Claim1}
 $ \overline W $ is a simple vertex algebra generated by $\big\{ E,F,T,H, H^{\perp} \big\}$.
\end{Claim}

Let $U$ be the vertex subalgebra of $\overline W $ generated by $ \big\{E,F,T,H, H^{\perp} \big\}$. Then $U \cong \operatorname{Im} (\Phi) \otimes M_{H^{\perp}} (1)$. Since \smash{$\operatorname{Im} (\Phi) \cong L^{N=2}_{c=-15}$}, we conclude that
\smash{$U\cong L^{N=2}_{c=-15} \otimes M_{H^{\perp}} (1)$}, and it is therefore simple.

 For each $n \in {\Z}$, we consider the $U$-module $U^{(n)} = U. e^{n \varphi^-}$. One can show that
 $ U^{(n)} $ is isomorphic to the simple $U$-module obtained by the simple current construction
\[
 \bigl(U^{(n)}, Y^{(n)} (\cdot, z)\bigr): = (U , Y(\Delta(n \varphi^-,) \cdot, z)).
 \]

 Note that $ \varphi^- = -H - 4 H^{\perp}$, which implies that
 \[
 \Delta(\varphi^-, z) = \Delta( -H, z) \Delta\bigl(- 4 H^{\perp}, z\bigr).
 \]
 \begin{itemize}\itemsep=0pt
\item For a $ \operatorname{Im} (\Phi)$-module $M$, by applying the operator $\Delta(-nH, z)$, we get the module $\sigma^{n}( M)$.
\item Applying the operator $ \Delta\bigl( -4n H^{\perp}, z\bigr)$ on
 $ M_{H^{\perp}}(1)$, we get the $ M_{H^{\perp}}(1)$-module
$ M_{H^{\perp}} (1, n )$.
\end{itemize}

We get
$
U^{(n)}= \sigma^{n}( \operatorname{Im} (\Phi)) \otimes M_{H^{\perp}} (1,n )$.
Note that $ H^{\perp} (0) \equiv n \operatorname{Id} $ on $U^{(n)}$.
 Using the construction of H. Li from \cite{Li-ext}, we get that
\[
 \mathcal U = \bigoplus_{n \in \mathbb{Z}} U^{(n)}
 \]
is a vertex algebra and hence a vertex subalgebra of $ \widetilde W \otimes \F_{-1}$. But it is not hard to see that $ \mathcal U$ contains all generators of $ \widetilde W \otimes \F_{-1}$.

 Indeed, $e^{\varphi^-}\in U^{(1)}$, $e^{-\varphi^-}\in U^{(-1)}$ and
 \[
 G^+ = \frac{3}{2} E_{(-2)}e^{-\varphi^-}, \qquad G^- =- F_{(-2)}e^{\varphi^-}.
 \]

Since $G^+$ and $G^-$ (weakly) generate $ \widetilde W$, it follows that
$
 \mathcal U = \widetilde W \otimes \F_{-1}$.

This proves that $ U= \overline W \cong \operatorname{Im} (\Phi) \otimes M_{H^{\perp}} (1) $. Therefore, $\overline W$ is a simple vertex algebra. This proves Claim~\ref{Claim1}.

Now we get
\[
 \widetilde W \otimes \F_{-1} = \bigoplus_{n \in \mathbb{Z}} U^{(n)} = \bigoplus_{n \in \mathbb{Z}} \sigma^n \bigl(L^{N=2}_{c=-15}\bigr) \otimes M_{H^\perp} (1, n).
 \]
This implies the assertions (2) and (3).

As the right-hand side of decomposition in (2) is simple, it follows that $\widetilde W $ must be simple and therefore isomorphic to the unique simple quotient $\mathcal{W}_{-1}(\mathfrak{sl}_4, f_{\rm sub})$. Hence $I$ is the maximal ideal in $\mathcal{W}^{-1}(\mathfrak{sl}_4, f_{\rm sub})$.
This proves assertion (4).
\end{proof}

 Let $M_H(1)$ be the Heisenberg subalgebra of $L^{N=2}_{c=-15}$ generated by the Heisenberg field $H(z)$ and $M_J(1)$ the Heisenberg subalgebra of $\mathcal{W}_{k}(\mathfrak{sl}_4, f_{\rm sub}) $ generated by the Heisenberg field $J(z)$.
From the proof of Proposition~\ref{ulaganje-1}, we have the following corollary.

	\begin{Corollary} \label{pf} $\operatorname{Com} \bigl(M_H(1) , L^{N=2}_{c=-15} \bigr) \cong \operatorname{Com} (M_J(1), \mathcal{W}_{-1}(\mathfrak{sl}_4, f_{\rm sub}) )$. \end{Corollary}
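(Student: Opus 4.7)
The plan is to compute a single double commutant inside $\mathcal{W}_{-1}(\mathfrak{sl}_4, f_{\rm sub}) \otimes \F_{-1}$ in two different ways and read off the isomorphism from the two answers.

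The key geometric observation is that the two rank-two Heisenberg subalgebras
\[
M_J(1) \otimes M_{\varphi^-}(1) \quad \text{and} \quad M_H(1) \otimes M_{H^{\perp}}(1)
\]
of $\mathcal{W}_{-1}(\mathfrak{sl}_4, f_{\rm sub}) \otimes \F_{-1}$ coincide. Indeed, by the definitions in Proposition~\ref{ulaganje-1}, $H = -4J - 5\varphi^{-}$ and $H^{\perp} = J + \varphi^-$ both lie in the linear span of $J$ and $\varphi^-$, and a short computation using $J_{(1)}J = \tfrac{5}{4}\mathbbm{1}$ and $\varphi^-_{(1)}\varphi^- = -\mathbbm{1}$ shows that $H^{\perp}_{(1)}H = 0$ while $H_{(1)}H = -5\mathbbm{1}$ and $H^{\perp}_{(1)}H^{\perp} = \tfrac{1}{4}\mathbbm{1}$. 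So $(H,H^{\perp})$ is an orthogonal basis of the same rank-two Heisenberg subspace as $(J,\varphi^-)$, which is exactly what is needed for the associated (commutative) vertex subalgebras to be equal.

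Next I would compute $C := \operatorname{Com}(M_J(1) \otimes M_{\varphi^-}(1), \mathcal{W}_{-1}(\mathfrak{sl}_4, f_{\rm sub}) \otimes \F_{-1})$ in two ways. On the one hand, taking commutants in stages and using that $M_{\varphi^-}(1)$ acts trivially on the first tensor factor, one has
\[
C \;=\; \operatorname{Com}\bigl(M_J(1),\, \mathcal{W}_{-1}(\mathfrak{sl}_4, f_{\rm sub}) \otimes \operatorname{Com}(M_{\varphi^-}(1), \F_{-1})\bigr).
\]
Since $\F_{-1}$ is a lattice extension of $M_{\varphi^-}(1)$, every nonzero element $e^{m\varphi^-}$ with $m\neq 0$ has nonzero Heisenberg charge, so $\operatorname{Com}(M_{\varphi^-}(1), \F_{-1}) = \mathbb{C}\mathbbm{1}$. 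Hence $C \cong \operatorname{Com}(M_J(1), \mathcal{W}_{-1}(\mathfrak{sl}_4, f_{\rm sub}))$.

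On the other hand, using the equal pair $M_H(1) \otimes M_{H^{\perp}}(1)$, iterated commutants give
\[
C \;=\; \operatorname{Com}\bigl(M_H(1),\, \operatorname{Com}(M_{H^{\perp}}(1), \mathcal{W}_{-1}(\mathfrak{sl}_4, f_{\rm sub}) \otimes \F_{-1})\bigr) \;\cong\; \operatorname{Com}\bigl(M_H(1),\, L^{N=2}_{c=-15}\bigr)
\]
by Proposition~\ref{ulaganje-1}(3), noting that the image of $\Phi$ contains the Heisenberg field $H$, so the outer commutant really is taken inside $L^{N=2}_{c=-15}$. Equating the two expressions for $C$ yields the claim.

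The only thing one needs to check carefully is that the iterated-commutant identity $\operatorname{Com}(A\otimes B, V) = \operatorname{Com}(A, \operatorname{Com}(B, V))$ applies in our setting; this is standard once $A$ and $B$ are mutually commuting vertex subalgebras, which here is guaranteed by $H_{(n)}H^{\perp} = 0$ and $J_{(n)}\varphi^- = 0$ for $n \ge 0$. I do not expect any serious obstacle: once the rank-two Heisenberg identification is in place, the corollary follows by bookkeeping.
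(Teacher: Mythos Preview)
Your argument is correct and is precisely the natural way to extract the corollary from Proposition~\ref{ulaganje-1}; the paper itself gives no explicit proof beyond the remark ``from the proof of Proposition~\ref{ulaganje-1}'', and your double-commutant computation using the change of orthogonal Heisenberg basis $(J,\varphi^-)\leftrightarrow(H,H^{\perp})$ together with part~(3) of that proposition is exactly what makes this remark rigorous.
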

	
\subsection[Embedding of W\_k(sl\_4, f\_sub) into L\^{}N=2\_c=-15]{Embedding of $\boldsymbol{\mathcal{W}_{k}(\mathfrak{sl}_4, f_{\rm sub})}$ into $\boldsymbol{L^{N=2}_{c=-15} \otimes \F_{1}}$ }

It remains to show that $\mathcal{W}_{-1}(\mathfrak{sl}_4, f_{\rm sub}) $ can be realized as a subalgebra of the tensor product of the $N=2$ superconformal algebra $L^{N=2}_{c}$ for $c=-15$ and the lattice vertex algebra $\F_1$.

Let $\overline{H} = H - \varphi^+$. Then for $n \ge 0$ we have $\overline H(n) \overline H = -4 \delta_{n,1}\mathbbm{1}$.
Let $M_{\overline{H}} (1)$ be the Heisenberg vertex algebra generated by $\overline H$, and $M_{\overline{H}} (1, s)$ the irreducible highest weight $M_{\overline{H}} (1)$-module on which
$\overline H(0) \equiv s \operatorname{Id}$.

\begin{Proposition} \label{ulaganje-2}
\quad
\begin{itemize}\itemsep=0pt
	\item[$(1)$] There is a vertex algebra homomorphism $\Phi^{\rm inv} \colon \mathcal{W}^{-1}(\mathfrak{sl}_4, f_{\rm sub}) \rightarrow L^{N=2}_{c=-15} \otimes \F_{1}$ given by
	\begin{gather*}
		G^+ = E\otimes e^{\varphi^+}, \qquad
		G^- = \frac{3}{2} F \otimes e^{-\varphi^+}, \qquad
		J = - \frac{1}{4} H + \frac{5}{4} \varphi^+, \\
		L = T + \frac{1} {10} {:}H H{:}+ \frac{2}{5} {:}\left( - \frac{1}{4} H + \frac{5}{4} \varphi^+\right)^2{:}, \qquad
		W = -\frac{3}{2} W^{N=2}_{c=-15}.
	\end{gather*}
	
\item[$(2)$] $\operatorname{Im} \bigl(\Phi^{\rm inv}\bigr) $ is isomorphic to the simple vertex algebra $ \mathcal{W}_{-1}(\mathfrak{sl}_4, f_{\rm sub})$.

\item[$(3)$]
 As a $ \mathcal{W}_{-1}(\mathfrak{sl}_4, f_{\rm sub})\otimes M_{\overline{H}} (1)$-module
\[
L^{N=2}_{c=-15} \otimes \F_{1} \cong \bigoplus_{n\in {\Z} }
 \psi^{-n} ( \mathcal{W}_{-1}(\mathfrak{sl}_4, f_{\rm sub})) \otimes M_{\overline{H}} (1, n).
 \]
 \item[$(4)$] We have
\[
 \operatorname{Com} \bigl(M_{\overline{H}}(1), L^{N=2}_{c=-15} \otimes \F_{1}\bigr) \cong \mathcal{W}_{-1}(\mathfrak{sl}_4, f_{\rm sub}).
 \]
 \end{itemize}
	
\end{Proposition}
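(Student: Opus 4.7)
The strategy is to mirror the proof of Proposition~\ref{ulaganje-1} with the roles of $L^{N=2}_{c=-15}$ and $\mathcal{W}_{-1}(\mathfrak{sl}_4,f_{\rm sub})$ exchanged. I would establish (1) by a direct OPE verification, deduce (2) by exploiting the characterization of the maximal ideal provided by Proposition~\ref{ulaganje-1}(4), and obtain (3) and (4) simultaneously via the simple current extension construction of H.~Li.

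For part~(1), I plug the proposed expressions for $J,L,G^{\pm},W$ into the OPEs of $\mathcal{W}^{-1}(\mathfrak{sl}_4,f_{\rm sub})$ listed in Appendix~\ref{appendix-b} and verify each one at $k=-1$. The formulas are engineered so that $J$ has the correct self-OPE coefficient $(3k+8)/4=5/4$; $L$ is a Virasoro field of central charge $c_{-1}=-15$ obtained from the $N=2$ stress tensor $T$ by a Sugawara-type correction in $H$ and $\varphi^+$; $G^{\pm}$ are primary of conformal weights $1$ and $3$ with respect to the shifted Virasoro $\widetilde L=L+\partial J$; and $W$ is proportional to the coset generator $W^{N=2}_{c=-15}$ of Section~\ref{paraferm-1}, ensuring that the $G^+\cdot G^-$ OPE produces precisely the $W$-field. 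All remaining OPEs reduce to the $N=2$ OPEs combined with the standard lattice identity $e^{\varphi^+}(z)e^{-\varphi^+}(w)\sim (z-w)^{-1}(1+(z-w)\varphi^+(w)+\cdots)$. The computation is mechanical but lengthy.

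For part~(2), $\Phi^{\rm inv}$ factors through some quotient of $\mathcal{W}^{-1}(\mathfrak{sl}_4,f_{\rm sub})$. By Proposition~\ref{ulaganje-1}(4), its maximal ideal is generated by $(G^+)^2$ and $(G^-)^2$, so it suffices to check these vanish in the image. Since $\langle \varphi^+,\varphi^+\rangle=1$, the operator product $e^{\varphi^+}(z)e^{\varphi^+}(w)$ is divisible by $(z-w)$, so $(e^{\varphi^+})_{(-1)}e^{\varphi^+}=0$; analogously $(e^{-\varphi^+})_{(-1)}e^{-\varphi^+}=0$. Computing the normally ordered squares in the tensor product, $(G^{\pm})^2$ split into a product of an $N=2$ part and a lattice part, and the lattice parts vanish. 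Therefore $\operatorname{Im}(\Phi^{\rm inv})\cong\mathcal{W}_{-1}(\mathfrak{sl}_4,f_{\rm sub})$.

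For parts~(3) and (4), set $U=\operatorname{Im}(\Phi^{\rm inv})\otimes M_{\overline H}(1)$ and, for each $n\in\Z$, define $U^{(n)}=U\cdot e^{n\varphi^+}$. Expressing $\varphi^+$ as a linear combination of $J$ and $\overline H$ (one checks $\varphi^+=J+\tfrac{1}{4}\overline H$ on the image) yields the factorization $\Delta(n\varphi^+,z)=\Delta(nJ,z)\Delta(\tfrac{n}{4}\overline H,z)$, which identifies $U^{(n)}$ with $\psi^{-n}(\mathcal{W}_{-1}(\mathfrak{sl}_4,f_{\rm sub}))\otimes M_{\overline H}(1,n)$. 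H.~Li's extension construction turns $\bigoplus_n U^{(n)}$ into a vertex subalgebra of $L^{N=2}_{c=-15}\otimes\F_1$, and a direct check shows that it already contains the generators $E,F,H,T$ of $L^{N=2}_{c=-15}$ together with $e^{\pm\varphi^+}$ (for instance, $E$ is proportional to $(G^+)_{(-2)}e^{-\varphi^+}$, and similarly for $F$). Hence equality holds, proving (3), and (4) follows by extracting $\overline H(0)$-invariants, which kills all summands with $n\neq 0$. The main obstacle is careful bookkeeping of signs and normalizations in the spectral flow identification $U^{(n)}\cong\psi^{-n}(\mathcal{W}_{-1}(\mathfrak{sl}_4,f_{\rm sub}))\otimes M_{\overline H}(1,n)$; the remaining steps are either direct OPE calculations or direct transpositions of the arguments used for Proposition~\ref{ulaganje-1}.
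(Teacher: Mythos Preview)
Your proposal is correct and parallels the paper's argument for parts (1), (3), and (4). The notable difference is part (2): the paper deduces simplicity of $\operatorname{Im}(\Phi^{\rm inv})$ by first showing, via the same simple-current decomposition used for (3), that the charge-zero subspace $\overline{W}=\operatorname{Ker}\overline{H}(0)$ of the simple vertex superalgebra $L^{N=2}_{c=-15}\otimes\F_1$ coincides with $\operatorname{Im}(\Phi^{\rm inv})\otimes M_{\overline H}(1)$; simplicity of $\overline{W}$ then forces simplicity of the image. Your route is more economical: since Proposition~\ref{ulaganje-1}(4) already identifies the maximal ideal of $\mathcal{W}^{-1}(\mathfrak{sl}_4,f_{\rm sub})$ as $\langle (G^+)^2,(G^-)^2\rangle$, it suffices to observe that $Y(E,z)E$ is regular while $Y(e^{\varphi^+},z)e^{\varphi^+}$ vanishes to first order, so $(G^+)_{(-1)}G^+=0$ (and likewise for $G^-$), whence $\Phi^{\rm inv}$ factors through the simple quotient and, being nonzero, is injective on it. One minor imprecision: your phrase ``split into a product'' is loose, since $(G^+)_{(-1)}G^+$ is a convolution sum over modes rather than a single tensor; but the vanishing is exactly as you describe once phrased that way. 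The paper's approach has the virtue of not relying on the explicit form of the maximal ideal; yours has the virtue of cashing in what was just proved.
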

\begin{proof}
(1)
 First we notice that
$L = T^{\perp} + \frac{2}{5}{:}JJ{:}$,
which implies that $L$ is a Virasoro vector of central charge $c=-15$. Note that
$
 L^{\perp} = L- \frac{2}{5}{:}JJ{:} = T^{\perp}$.
Clearly, $J$ is a primary vector of conformal weight $1$ with respect to $L$.
Moreover, since
$L = T + \frac{2}{5}{:}JJ{:} + \frac{1}{10} {:}H H{:} $, we conclude that $G^{\pm}$ are primary vectors of conformal weight $2$.
Direct calculation shows that
\begin{gather*}
 G^+(3) G^- = \frac{3}{2}(-E_{(2)}F) = \frac{3}{2}\left(- \frac{2 c}{3}\right) \mathbbm{1} =15\mathbbm{1} =(2 + k)(5 + 2k)(8 + 3k)\mathbbm{1}, \\
 G^+(2) G^- =\frac{3}{2}\left(-E_{(1)}F- \varphi^+(-1) E_{(2)}F\right) \\
 \phantom{G^+(2) G^- }{} = \frac{3}{2}\left(-2H -\frac{2c}{3} \varphi^+\right) = 12\left( - \frac{1}{4} H + \frac{5}{4} \varphi^+ \right) =12J = 4(2 + k)(5 + 2k)J.
 \end{gather*}

 We should show that
\[ G^+(1) G^-
 = - 3 L + 6 {:} JJ {:} + 6 \partial J = -3 L^{\perp} + \frac{24}{5} J^2 + 6 \partial J.
 \]
 Indeed
\begin{align*}
G^+(1) G^- &= \frac{3}{2}\left(-E_{(0)}F-\varphi^+(-1) E_{(1)}F-\frac{1}{2}\bigl({:}\bigl(\varphi^+\bigr)^2{:}+\partial\varphi^+\bigr)E_{(2)}F \right) \\
&= \frac{3}{2}\left( -2 T -\partial H -2{:}H\varphi^+{:} - \frac{1}{2}\bigl({:}\bigl(\varphi^+\bigr)^2{:}+\partial\varphi^+ \bigr)\frac{2c}{3}\right)\mathbbm{1} \\
& = -3 L^{\perp}+ \frac{3}{10} {:}H^2{:}- \frac{3}{2} \partial H -3 {:}H \varphi^+ {:} + \frac{15}{2} \bigl({:}\bigl(\varphi^+\bigr)^2{:} +\partial \varphi^+\bigr) \\
& = -3 L^{\perp} + \frac{24}{5} {:}J ^2{:} + 6\partial J.
 \end{align*}

 Next, we need to show that
 \begin{align}
G^{+}(0) G^{-} ={}&(k+2)\left(W + \frac{8(11k+32)}{3(3k+8)^2}{:}J^3{:}-\frac{4(k+4)}{3k+8}{:}L J{:} + 6{:}\partial JJ{:}\right. - \frac{k+4}{2}\partial L \nonumber\\
 &+ \left. \frac{4(3k^2+17k+26)}{3(3k+8)}\partial^2J\right) \nonumber \\
={}& W+ \frac{32}{25}{:}J^3{:}-\frac{12}{5}{:}JL^{\perp}{:}+\frac{24}{5} {:}\partial JJ{:}-\frac{3}{2}\partial L^{\perp}+2\partial^2J.\label{GpGm-OPE}
\end{align}

We have that
 \begin{align}
G^+(0) G^-={}& \frac{3}{2}\left(-E_{(-1)}F -\varphi^+(-1) E_{(0)}F - E_{(1)}F\cdot \frac{1}{2}\bigl({:}\bigl(\varphi^+\bigr)^2{:}+\partial \varphi^+\bigr) \right. \nonumber\\
&\left. - E_{(2)}F\cdot \frac{1}{6}\bigl({:} \bigl(\varphi^+ \bigr)^3 {:}+3{:}\varphi ^+ \partial \varphi^+{:} + \partial ^2 \varphi^+\bigr) \right) \nonumber\\
={}& \frac{3}{2}\left(-E_{(-1)}F -\varphi^+(-1) \left(2L^{\perp} - \frac{1}{5}{:}H^2{:} +\partial H\right) - 2H\cdot\frac{1}{2}\bigl({:}\bigl(\varphi^+\bigr)^2{:}+\partial \varphi^+\bigr) \right. \nonumber\\
&\left.- \frac{2c}{3}\cdot \frac{1}{6}\bigl({:}\bigl(\varphi^+\bigr)^3{:} + 3{:}\varphi^+ \partial \varphi^+{:} + \partial ^2 \varphi^+\bigr) \right) .  \label{GpGm-rel}
\end{align}

As the parafermionic subalgebras of $\mathcal W_{-1} (\mathfrak{sl}_4, f_{\rm sub})$ and $L^{N=2}_{c=-15} $ coincide (cf.\ Corollary \ref{pf}), it follows that the field $W$ coincides (up to normalization) with the parafermionic generator~$W^{N=2}_{c=-15}$, that is,
\begin{align}
	W	&= \nu \left({:}E F{:} - \partial L^{\perp} - \frac{3}{2c}{:}H\partial H{:} -\frac{6}{c}{:}HL^{\perp}{:}- \frac{1}{3}\partial^2H + \frac{3}{c^2}{:}H^3{:} \right) \nonumber \\
	&= \nu \left({:}E F{:} - \partial L^{\perp}+ \frac{1}{5}{:}\partial H H {:} + \frac{2}{5}{:}HL^{\perp}{:} -\frac{1}{3}\partial^2H - \frac{1}{75}{:}H^3{:} \right). \label{W}
\end{align}
Setting $\nu = -\frac{3}{2}$ and substituting \eqref{W} into \eqref{GpGm-OPE}, we obtain that \eqref{GpGm-rel} = \eqref{GpGm-OPE}.
This proves the assertion (1).

(2) Let us prove that $\operatorname{Im} \bigl(\Phi^{\rm inv}\bigr) $ is simple. Let \smash{$\overline{W} = \operatorname{Ker}_{ L^{N=2}_{c=-15} \otimes \F_{1}} \overline{H} (0)$}. It is clear that $\overline{W}$ is a simple vertex algebra which contains $ \operatorname{Im} \bigl(\Phi^{\rm inv}\bigr) \otimes M_{\overline{H}} (1)$. The simplicity of $ \operatorname{Im} \bigl(\Phi^{\rm inv}\bigr) $ follows from the following claim.

\begin{Claim}\label{Claim2}
$\overline W$ is generated by $\big\{ G^+, G^-, J, L, W, \overline{H} \big\}$.
\end{Claim}

Proof of Claim~\ref{Claim2} is analogous to the proof of Claim~\ref{Claim1} in Proposition \ref{ulaganje-1}. Let $U$ be the vertex subalgebra of $\overline W$ generated by $\big\{ G^+, G^-, J, L^U, W, \overline{H} \big\}$. Then clearly $U \cong \operatorname{Im} \bigl(\Phi^{\rm inv}\bigr) \otimes M_{\overline{H}} (1)$.

Let $ U^{(n)} $ be the $U$-module obtained by the simple current construction
\[ \bigl(U^{(n)}, Y^{(n)} (\cdot, z)\bigr): = \bigl(U , Y\bigl(\Delta\bigl(n \varphi^+\bigr) \cdot, z\bigr)\bigr).
\]

As in Proposition \ref{ulaganje-1}, from the formula
\[
\Delta\bigl(n\varphi^+, z\bigr) = \Delta( nJ, z) \Delta\left(\frac{n}{4} \overline{H}, z\right)
\]
we get
$U^{(n)}= \psi^{-n}\bigl( \operatorname{Im} \bigl(\Phi^{\rm inv}\bigr)\bigr) \otimes M_{\overline{H}} (1,n )$.

The rest of the proof follows analogously.
\end{proof}

We have proved the following.

\begin{Theorem} The vertex algebra $\mathcal{W}_{-1}(\mathfrak{sl}(4), f_{\rm sub})$ is the Kazama--Suzuki dual of the $N=2$ superconformal vertex algebra $L^{N=2}_{c=-15}$.
\end{Theorem}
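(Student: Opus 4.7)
The proof is essentially a packaging of Propositions \ref{ulaganje-1} and \ref{ulaganje-2}, which together supply exactly the data required by the definition of Kazama--Suzuki duality given in \eqref{KSdef}. The plan is therefore to identify the ingredients on both sides and verify that each clause in the definition has already been established.

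Set $V = \mathcal{W}_{-1}(\mathfrak{sl}_4, f_{\rm sub})$ and $U = L^{N=2}_{c=-15}$. For the map $\varphi_1 \colon V \to U \otimes \F_1$ I take $\Phi^{\rm inv}$ from Proposition~\ref{ulaganje-2}(1); for the map $\varphi_2 \colon U \to V \otimes \F_{-1}$ I take $\Phi$ from Proposition~\ref{ulaganje-1}(1). The rank-one Heisenberg subalgebras are $M_{H_1}(1) = M_{\overline{H}}(1) \subset L^{N=2}_{c=-15} \otimes \F_1$ and $M_{H_2}(1) = M_{H^{\perp}}(1) \subset \mathcal{W}_{-1}(\mathfrak{sl}_4, f_{\rm sub}) \otimes \F_{-1}$, both defined in the preceding two subsections.

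Next I would verify each of the two conditions in \eqref{KSdef}. The commutant identifications
\[
V \cong \operatorname{Com} \bigl(M_{\overline{H}}(1), U \otimes \F_1\bigr), \qquad U \cong \operatorname{Com} \bigl(M_{H^{\perp}}(1), V \otimes \F_{-1}\bigr)
\]
are precisely the contents of Proposition~\ref{ulaganje-2}(4) and Proposition~\ref{ulaganje-1}(3), respectively. Injectivity of $\Phi$ follows because $V^{N=2}_{c=-15}$ is simple (as noted in the text before Proposition~\ref{ulaganje-1}, it is the Kazama--Suzuki dual of the simple algebra $V^{-5/3}(\mathfrak{sl}_2)$), so the nonzero homomorphism $\Phi$ from $L^{N=2}_{c=-15}$ must be injective; injectivity of $\Phi^{\rm inv}$ follows from Proposition~\ref{ulaganje-2}(2), which identifies $\operatorname{Im}(\Phi^{\rm inv})$ with the simple vertex algebra $\mathcal{W}_{-1}(\mathfrak{sl}_4, f_{\rm sub})$.

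The only substantive point to check, beyond citation, is that the Heisenberg field $\overline{H} = H - \varphi^{+}$ on the $U$-side and $H^{\perp} = J + \varphi^{-}$ on the $V$-side really sit inside the images of $\Phi^{\rm inv}$ and $\Phi$ extended by the lattice fields in the expected way, so that the commutant descriptions match the framework of \eqref{KSdef}. Since no real obstacle remains at this stage, the argument is short: assemble the two propositions, observe that the hypotheses of \eqref{KSdef} are literally their conclusions, and conclude that $\mathcal{W}_{-1}(\mathfrak{sl}_4, f_{\rm sub})$ is the Kazama--Suzuki dual of $L^{N=2}_{c=-15}$. The real work has all been done in establishing the two embeddings and the decompositions in Propositions~\ref{ulaganje-1}(2) and~\ref{ulaganje-2}(3); this theorem is their formal consequence.
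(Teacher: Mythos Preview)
Your proposal is correct and matches the paper's approach exactly: the paper gives no separate proof for this theorem, simply writing ``We have proved the following'' immediately after Proposition~\ref{ulaganje-2}, since the two propositions supply precisely the embeddings and commutant identifications required by the definition~\eqref{KSdef}. Your write-up is, if anything, more explicit than the paper about which clause of each proposition furnishes which piece of the duality data.
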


\section[Classification of irreducible modules for W\_-1(sl\_4, f\_sub]{Classification of irreducible modules for $\boldsymbol{\mathcal{W}_{-1}(\mathfrak{sl}_4, f_{\rm sub})}$}

 In this section, we will classify irreducible highest weight modules for $\mathcal{W}_{-1}(\mathfrak{sl}_4, f_{\rm sub})$, using the realization from Section \ref{duality}.
 We shall first show the existence of irreducible highest weight modules using the realization of $\mathcal{W}_{-1}(\mathfrak{sl}_4, f_{\rm sub})$ as a subalgebra of $L_c ^{N=2} \otimes \F_1$. Next we classify the irreducible modules using the inverse Kazama--Suzuki construction.

\subsection[Irreducible modules for W\_-1(sl(4), f\_sub]{Irreducible modules for $\boldsymbol{\mathcal{W}_{-1}(\mathfrak{sl}(4), f_{\rm sub})}$}
Recall that we consider $\mathcal{W}_{-1}(\mathfrak{sl}_4, f_{\rm sub})$ as a graded vertex algebra where the gradation is defined by the operator $\widetilde L(0)$.

 Furthermore $(G^+)^2$ is singular vector in $\mathcal{W}^{-1}(\mathfrak{sl}_4, f_{\rm sub})$ (cf.\ Lemma \ref{sing_G^+}). This implies that $ L(x,y,z)_{\text{\rm top}}$ can be either $1$-dimensional or $2$-dimensional.

 \subsection[Modules for W\_-1(sl\_4, f\_sub) with 1-dimensional top spaces]{Modules for $\boldsymbol{\mathcal{W}_{-1}(\mathfrak{sl}_4, f_{\rm sub})}$ with $\boldsymbol{1}$-dimensional top spaces}

 Let $L_c[h,q]$ be the irreducible highest weight $L_c ^{N=2}$-module, generated by the highest weight vector $v_{h,q}$ such $c=-15$ and
 $H(0) v_{h,q} = h v_{h,q}$, $ T(0) v_{h,q} = q v_{h,q}$.

 Let $w_{h,q} = v_{h,q} \otimes \mathbbm{1} \in L_c[h,q] \otimes \F_1$. We need the following result for $c=-15$.

 \begin{Lemma} \label{W(n)w}
 Let $W= W^{N=2}_{c}$. Let $W(z) = \sum_{n\in {\Z}} W(n) z^{-n-3}$.
 Then for $n \in {\Z}_{\ge 0}$
 \[
 W(0) w_{h,q} = \nu \left( 2 q -\frac{6}{c} ( q h + h) - \frac{2 (c-9)}{3c} h + \frac{6}{c^2}h^3 \right)w_{h, q}.
 \]
 In particular, when $c=-15$, we get for all $n \in {\Z}_{\ge 0}$
 \[
 W(n ) w_{h,q} = -\frac{1}{25} \delta_{n,0} ( h+5) \bigl(h^2 - 5h + 15 q\bigr) w_{h,q} .
 \]
 \end{Lemma}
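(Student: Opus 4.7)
The plan is to reduce the computation to mode actions of $W$ on $v_{h,q}$ inside $L_c[h,q]$ and then evaluate each summand of the explicit formula for $W = W^{N=2}_c$ using the twisted highest-weight conditions. Since $W\in L^{N=2}_c$ and $w_{h,q}=v_{h,q}\otimes\mathbbm{1}$, we immediately have $W(n)w_{h,q}=(W(n)v_{h,q})\otimes\mathbbm{1}$, so the whole computation takes place inside $L_c[h,q]$.

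For $n\ge 1$ I would argue vanishing by a weight count. Because $W$ lies in the parafermion coset $\mathcal N_c$, it commutes with $H(m)$ for $m\ge 0$, hence the OPE with $\partial H$ is regular and $W$ is also primary of conformal weight $3$ with respect to $\widetilde T=T+\partial H$. Consequently $[\widetilde L(0),W(n)]=[L(0)-H(0),W(n)]=-nW(n)$. Since $L_c[h,q]$ is $\widetilde L(0)$-graded with minimum weight $q-h$ realized by $v_{h,q}$, the vector $W(n)v_{h,q}$ of weight $q-h-n$ must vanish whenever $n\ge 1$.

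For the zero mode I would expand
\[
W=\nu\bigl({:}EF{:}-\partial T-\tfrac{6}{c}{:}TH{:}-\tfrac{c-9}{3c}\partial^{2}H+\tfrac{6}{c^{2}}{:}H^{3}{:}\bigr)
\]
and compute the zero-mode action of each summand on $v_{h,q}$ via the standard mode formula for normal-ordered products. The fermionic contribution vanishes, $({:}EF{:})(0)v_{h,q}=0$, because every term in the expansion ends in a factor that annihilates $v_{h,q}$, namely $F_{(m)}$ with $m\ge 2$ or $E_{(m)}$ with $m\ge 0$, both killed by the twisted conditions $E_{(k)}v_{h,q}=0$ ($k\ge -1$) and $F_{(k)}v_{h,q}=0$ ($k\ge 1$). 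The remaining contributions are $(\partial T)(0)v_{h,q}=-2qv_{h,q}$; $({:}TH{:})(0)v_{h,q}=h(q+1)v_{h,q}$, where the single surviving cross term needs the commutator $[H(1),L(-1)]=H(0)$; $(\partial^{2}H)(0)v_{h,q}=2hv_{h,q}$; and $({:}H^{3}{:})(0)v_{h,q}=h^{3}v_{h,q}$, obtained by iterated normal ordering after first checking $({:}HH{:})_{(1)}v_{h,q}=h^{2}v_{h,q}$ and $({:}HH{:})_{(n)}v_{h,q}=0$ for $n\ge 2$. Combining these with the coefficients of $W$ reproduces the displayed general formula.

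Finally, specializing $c=-15$ and $\nu=-\tfrac{3}{2}$ (the normalization fixed in Proposition~\ref{ulaganje-2}) and simplifying gives the factorization $-\tfrac{1}{25}(h+5)(h^{2}-5h+15q)$, which proves the $c=-15$ statement. The main obstacle is pure bookkeeping: correctly handling the half-integer indexing of the fermionic modes in ${:}EF{:}$ under the somewhat unusual twisted conditions (which annihilate $E_{(-1)}$ as well as all nonnegative-index modes) and tracking indices carefully through the iterated normal ordering for ${:}H^{3}{:}$. No conceptual difficulty arises beyond this.
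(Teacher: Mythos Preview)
Your approach is correct and essentially identical to the paper's: both compute $({:}EF{:})_{(2)}w_{h,q}=0$ via the normal-ordered mode formula and then read off the remaining zero-mode contributions from the explicit expression for $W^{N=2}_c$; the paper simply omits the details you spell out. One small slip: the twisted annihilation conditions are $E_{(k)}v_{h,q}=0$ for $k\ge 0$ and $F_{(k)}v_{h,q}=0$ for $k\ge 2$ (not $k\ge -1$ and $k\ge 1$ as you wrote), but since the modes you actually need to kill are $E_{(m)}$ with $m\ge 0$ and $F_{(m)}$ with $m\ge 2$, the argument is unaffected.
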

 \begin{proof}
Using the formula for normal ordered fields, we see that $({:} E F{:}) _{(2)}w_{h,q} =0 $.
Then from the expression \eqref{W}, we easily get $W(n) w_{h,q} =0$ for $n >0$ and
\[
 W(0) w_{h,q} = \nu \left( 2 q -\frac{6}{c} ( q h + h) - \frac{2 (c-9)}{3c} h + \frac{6}{c^2}h^3 \right)w_{h, q}.
 \]
The proof follows.
 \end{proof}

\begin{Lemma} \label{hw_Wk}
The vector $w_{h,q}$ is a highest weight vector for the $\mathcal{W}_{-1}(\mathfrak{sl}_4, f_{\rm sub})$-module $L_c[h,q]\otimes \F_1$ such that
\begin{gather*}
 J(0) w_{h,q} = -\frac{h}{4} w_{h,q}, \qquad
 L(0) w_{h,q} = \left(q+\frac{1}{8}h^2\right) w_{h,q}, \\
 W(0) w_{h,q} = -\frac{1}{25} (h+5) \bigl(h^2-5 h+15 q\bigr) w_{h,q}.
 \end{gather*}
 \end{Lemma}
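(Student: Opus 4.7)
The plan is to use the explicit embedding $\Phi^{\rm inv}\colon \mathcal{W}^{-1}(\mathfrak{sl}_4,f_{\rm sub})\to L^{N=2}_{c=-15}\otimes \F_1$ constructed in Proposition~\ref{ulaganje-2}, which gives closed-form expressions for each generator $J,L,G^{\pm},W$ in terms of the $N=2$ generators and the lattice fields $e^{\pm\varphi^+}$, and then to verify every defining condition of a highest weight module by direct computation using the twisted highest weight properties of $v_{h,q}$ together with the vacuum annihilation $\varphi^+(n)\mathbbm{1}=0$ for all $n\geq 0$.

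First I would compute the scalar eigenvalues. Since $J=-\tfrac14 H+\tfrac54\varphi^+$, the eigenvalue $J(0)w_{h,q}=-\tfrac{h}{4}w_{h,q}$ is immediate. Substituting $J$ into the formula of Proposition~\ref{ulaganje-2} yields $L=T+\tfrac{1}{8}{:}HH{:}-\tfrac14{:}H\varphi^+{:}+\tfrac{5}{8}{:}(\varphi^+)^2{:}$; when this acts on $v_{h,q}\otimes\mathbbm{1}$, the mixed and pure-$\varphi^+$ zero modes vanish (since $\varphi^+(n)\mathbbm{1}=0$ for $n\geq 0$) and the surviving pieces $T(0)v_{h,q}=q v_{h,q}$ and $H(0)^{2}v_{h,q}=h^{2}v_{h,q}$ produce $L(0)w_{h,q}=(q+h^{2}/8)w_{h,q}$. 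The value of $W(0)w_{h,q}$ is a direct application of Lemma~\ref{W(n)w}: because $\Phi^{\rm inv}(W)=-\tfrac32\,W^{N=2}_{c=-15}$ matches exactly the $\nu=-\tfrac32$ normalization used there, the $c=-15$ specialization of Lemma~\ref{W(n)w} delivers both $W(0)w_{h,q}=-\tfrac{1}{25}(h+5)(h^{2}-5h+15q)w_{h,q}$ and $W(n)w_{h,q}=0$ for $n>0$. The analogous (easier) term-by-term argument yields $J(n)w_{h,q}=L(n)w_{h,q}=0$ for $n>0$ by the positive-mode annihilations of $H,T,\varphi^+$ on $v_{h,q}\otimes\mathbbm{1}$.

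It remains to verify the fermionic conditions $G^+(m)w_{h,q}=0$ for $m\geq 0$ and $G^-(n)w_{h,q}=0$ for $n\geq 1$, which I would deduce by a short count of leading powers of $z$. Under $\Phi^{\rm inv}$ one has $G^+(z)w_{h,q}=(E(z)v_{h,q})\otimes(Y(e^{\varphi^+},z)\mathbbm{1})$; the twisted highest weight relations translate into $E_{(i)}v_{h,q}=0$ for all $i\geq -1$, so $E(z)v_{h,q}$ only carries powers $z^{k}$ with $k\geq 1$, while $Y(e^{\varphi^+},z)\mathbbm{1}$ is regular at $z=0$. Hence $G^+(z)w_{h,q}$ vanishes to order $z^{1}$, and the expansion $G^+(z)=\sum_n G^+(n)z^{-n-1}$ forces $G^+(m)w_{h,q}=0$ for every $m\geq -1$, covering the required range. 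Similarly, from $G^-=\tfrac32\,F\otimes e^{-\varphi^+}$ and the condition $F_{(i)}v_{h,q}=0$ for $i\geq 1$ (equivalent to $F(n+3/2)v_{h,q}=0$ for $n\geq 0$), the series $F(z)v_{h,q}$ has no power of $z$ below $z^{-1}$; combined with the regularity of $Y(e^{-\varphi^+},z)\mathbbm{1}$ and the convention $G^-(z)=\sum_n G^-(n)z^{-n-3}$, this yields $G^-(n)w_{h,q}=0$ for every $n\geq -1$, again covering $n\geq 1$.

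No deep obstacle is expected; the proof reduces to bookkeeping once $\Phi^{\rm inv}$ is in hand. The only places requiring genuine care are the translation between the half-integer mode labels $E(r),F(r)$ of the $N=2$ algebra and the integer mode labels $E_{(i)},F_{(i)}$ that appear in the tensor-product vertex operators, and making sure that the normalization of $W^{N=2}_{c=-15}$ fixed in Proposition~\ref{ulaganje-2} is the same as the $\nu=-\tfrac32$ specialization appearing in Lemma~\ref{W(n)w}.
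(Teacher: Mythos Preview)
Your approach is correct and is essentially the paper's own: invoke Proposition~\ref{ulaganje-2} for the explicit form of $J,L,G^{\pm},W$ inside $L^{N=2}_{c=-15}\otimes\F_1$, and read off the $W$-eigenvalue from Lemma~\ref{W(n)w}. The paper compresses all of this into a single line referring to Lemma~\ref{W(n)w} and Lemma~\ref{lemma5.3}; your write-up simply spells out the routine $J$, $L$ and $G^{\pm}$ checks that the paper leaves implicit.

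There is, however, a small bookkeeping slip you should fix. With the convention $E(z)=\sum_i E_{(i)}z^{-i-1}=\sum_i E(i+\tfrac12)z^{-i-2}$ one has $E_{(i)}=E(i-\tfrac12)$, so the twisted condition $E(n-\tfrac12)v_{h,q}=0$ for $n\ge 0$ translates to $E_{(i)}v_{h,q}=0$ for $i\ge 0$, not $i\ge -1$. Likewise $F_{(i)}=F(i-\tfrac12)$, and $F(n+\tfrac32)v_{h,q}=0$ for $n\ge 0$ gives $F_{(i)}v_{h,q}=0$ for $i\ge 2$, not $i\ge 1$. With the corrected bounds your power-counting yields $G^+(m)w_{h,q}=0$ for $m\ge 0$ and $G^-(n)w_{h,q}=0$ for $n\ge 0$, which is exactly (indeed, for $G^-$ slightly more than) what the highest-weight definition requires; so the conclusion survives, but your stated ranges $m\ge -1$ and $n\ge -1$ are overclaims. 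This is precisely the ``genuine care'' point you flagged in your last paragraph.
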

\begin{proof}
Follows from Lemma \ref{W(n)w} or Lemma~\ref{lemma5.3} (proof is given in the appendix).
\end{proof}
	
\begin{Lemma}\label{lemma5.3} The projection of $W$ in $A\bigl(L_c^{N=2}\bigr)$ is given by
\[
[W] = -\frac{1}{25} ([H]+5) \bigl([H]^2-5 [H]+15[T]\bigr).
\]
 \end{Lemma}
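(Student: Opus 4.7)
The plan is to identify $[W]$ by recording its action on all highest weight vectors and appealing to the description of the Zhu algebra. As noted in Section~\ref{paraferm-1}, one has an isomorphism $A\bigl(V^{N=2}_c\bigr) \cong \mathbb{C}[x_1,y_1]$ sending $[H({-1}){\mathbbm 1}] \mapsto x_1$ and $[T({-2}){\mathbbm 1}] \mapsto y_1$. Through this isomorphism, the image of any class $[a]$ is precisely the polynomial $p(h,q)$ for which $a_{(0)} v_{h,q} = p(h,q)\, v_{h,q}$ on every highest weight module $L_c[h,q]$, since as $(h,q)$ ranges over $\mathbb{C}^2$ such evaluations separate polynomials in $\mathbb{C}[x_1, y_1]$. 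At $c = -15$ the universal algebra is simple (it is the Kazama--Suzuki dual of the simple vertex algebra $V^{k=-5/3}(\mathfrak{sl}_2)$, as already used in Section~\ref{duality}), so $A\bigl(L^{N=2}_{-15}\bigr) = A\bigl(V^{N=2}_{-15}\bigr)$ and the same description applies.

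With this setup, the lemma is a one-line consequence of Lemma~\ref{W(n)w}: setting $c = -15$ there gives $W_{(0)} w_{h,q} = -\frac{1}{25}(h+5)\bigl(h^2 - 5h + 15q\bigr) w_{h,q}$. Translating back through $h \leftrightarrow [H]$, $q \leftrightarrow [T]$, this is precisely the claimed identity
\[
[W] = -\frac{1}{25}\bigl([H]+5\bigr)\bigl([H]^2 - 5[H] + 15[T]\bigr).
\]

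The main computational work therefore sits upstream, in the derivation of Lemma~\ref{W(n)w} from the explicit formula \eqref{W} for $W$. The sensitive step is expanding $({:}EF{:})_{(0)}$ and $({:}HL^{\perp}{:})_{(0)}$ on $w_{h,q}$ using the iterated normal ordered product formula and collapsing the result via the twisted highest weight conditions $E\bigl(n - \frac{1}{2}\bigr) v_{h,q} = F\bigl(n + \frac{3}{2}\bigr) v_{h,q} = 0$ for $n \ge 1$ together with the anticommutator $\{E(r), F(s)\}$; the remaining terms $({:}\partial H\cdot H{:})_{(0)}$, $(\partial L^{\perp})_{(0)}$, $(\partial^2 H)_{(0)}$ and $({:}H^3{:})_{(0)}$ act by elementary polynomials in $h$ and $q$. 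Once those substitutions are carried out, the rest is bookkeeping and the Zhu algebra statement follows at once.
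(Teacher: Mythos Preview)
Your argument is correct and is a genuinely different route from the paper's. The paper proves Lemma~\ref{lemma5.3} by a direct term-by-term computation in the Zhu algebra: it writes out $W = -\tfrac{3}{2}W^{N=2}_{c=-15}$ via formula~\eqref{W} and then evaluates the image of each piece, e.g., $[{:}H^3{:}]=[H]^3$, $[\partial L^{\perp}]=-2[T]-\tfrac{1}{5}[H]^2$, $[{:}HL^{\perp}{:}]=[H][T]+\tfrac{1}{10}[H]^3$, and crucially $[E_{(-1)}F]=0$ via the odd circle-product formula. You instead exploit the structural fact $A(V^{N=2}_{-15})\cong\mathbb{C}[x_1,y_1]$ together with simplicity at $c=-15$, so that $[W]$ is pinned down by the eigenvalue of its zero mode on highest weight vectors, and then invoke Lemma~\ref{W(n)w}. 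This is cleaner and avoids the delicate super-Zhu calculation of $[E_{(-1)}F]$; the trade-off is that your proof is not self-contained but rests on Lemma~\ref{W(n)w} and the polynomial description of the Zhu algebra.

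One small point to tighten: when you write ``$a_{(0)} v_{h,q} = p(h,q)\, v_{h,q}$'' you mean the zero mode $o(a)=a_{(\mathrm{wt}(a)-1)}$, not $a_{(0)}$. For $W$ of conformal weight~$3$ this is $W_{(2)}=W(0)$, which is exactly what Lemma~\ref{W(n)w} computes, so the argument is unaffected; but the general statement as written is not literally correct. Also, strictly speaking the modules $L_c[h,q]$ carry the \emph{twisted} highest weight conditions, whereas the Zhu algebra $A(V^{N=2}_c)$ classifies standard NS highest weight modules. This does not cause trouble here: the vanishing $({:}EF{:})_{(2)}v_{h,q}=0$ used in Lemma~\ref{W(n)w} only needs $E_{(j)}v=0$ for $j\ge 0$ and $F_{(j)}v=0$ for $j\ge 1$, which hold for both the twisted and the standard highest weight vectors, so the eigenvalue formula is the same in either setting.
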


 \begin{Proposition}\label{dim1-mod}
	Let $L(x,y,z)$ be an irreducible $\mathcal{W}_{k=-1}(\mathfrak{sl}_4, f_{\rm sub})$-module such that the top level $L(x,y,z)_{\text{\rm top}}$ is $1$-dimensional. Then for $(x,y,z) \in \mathbb{C}^3$ it holds that
\[
 g_1(x,y,z): = -6 x^2 + \frac{56}{25}x^3 + 4x - \frac{12}{5}xy + 3y + z = 0.
 \]
\end{Proposition}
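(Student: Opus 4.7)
The plan is to exploit the hypothesis that $L(x,y,z)_{\text{top}}$ is one-dimensional to conclude that both $G^+(0)$ and $G^-(0)$ annihilate the highest-weight vector $v_{x,y,z}$, so that the commutator $[G^+(0),G^-(0)]\,v_{x,y,z}$ vanishes; evaluating this commutator explicitly via Borcherds' identity and the OPE relation \eqref{GpGm-OPE} then produces $g_1(x,y,z)\,v_{x,y,z}$, forcing $g_1(x,y,z)=0$.

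The first task is the annihilation statement. The condition $G^+(0) v_{x,y,z} = 0$ is already part of the definition of highest weight (the $n=1$ case of $G^+(n-1) v_{x,y,z} = 0$). For $G^-(0)$, it commutes with $\widetilde L(0)$ and hence preserves the top; the hypothesis then forces $G^-(0) v_{x,y,z}$ to be a scalar multiple of $v_{x,y,z}$. But $G^-$ has $J$-charge $-1$ (equivalently $[J(0), G^-(0)] = -G^-(0)$), so $G^-(0) v_{x,y,z}$ is a $J(0)$-eigenvector with eigenvalue $x-1 \neq x$, and that scalar must vanish.

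Next, Borcherds' commutator formula and the fact that $G^+$ has $\widetilde L$-weight $1$ collapse the relevant binomial sum to $[G^+_{(0)}, G^-_{(2)}] = (G^+_{(0)} G^-)_{(2)}$ (both fields are bosonic, and in the paper's conventions $G^+(0) = G^+_{(0)}$, $G^-(0) = G^-_{(2)}$). Substituting the explicit identity \eqref{GpGm-OPE}, valid in $\mathcal W^{-1}(\mathfrak{sl}_4, f_{\rm sub})$, reduces the task to evaluating $(\cdot)_{(2)}$ of each summand on $v_{x,y,z}$. This step is routine: $J(0), L(0), W(0)$ act by $x, y, z$; from $(:JJ:)_{(1)} v_{x,y,z} = x^2 v_{x,y,z}$ one obtains $L^{\perp}(0) v_{x,y,z} = \bigl(y - \tfrac{2}{5} x^2\bigr) v_{x,y,z}$; the derivative identity $(\partial a)_{(m)} = -m\, a_{(m-1)}$ yields $(\partial L^{\perp})_{(2)} v_{x,y,z} = -2\bigl(y - \tfrac{2}{5} x^2\bigr) v_{x,y,z}$ and $(\partial^2 J)_{(2)} v_{x,y,z} = 2 x\, v_{x,y,z}$; and the mode formula $(:ab:)_{(n)} = \sum_{k \leq -1} a_{(k)} b_{(n-k-1)} + \sum_{k \geq 0} b_{(n-k-1)} a_{(k)}$ applied to a highest-weight vector produces $(:J^3:)_{(2)} v_{x,y,z} = x^3 v_{x,y,z}$, $(:J L^{\perp}:)_{(2)} v_{x,y,z} = x\bigl(y - \tfrac{2}{5} x^2\bigr) v_{x,y,z}$ and $(:\partial J\, J:)_{(2)} v_{x,y,z} = -x^2 v_{x,y,z}$. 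Collecting these with the coefficients from \eqref{GpGm-OPE} reproduces exactly $g_1(x,y,z)\, v_{x,y,z}$, which by the previous paragraph equals zero.

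The only technical care lies in the zero-mode bookkeeping for iterated normal-ordered products like $:J^3:$ and in correctly tracking the signs in the $\partial$ and $\partial^2$ modes; the saving grace is that $v_{x,y,z}$ is annihilated by every positive mode of $J, L, W$, so in each Wick-type expansion only the single term where $J(0)$ acts last survives, and the whole computation reduces to polynomial algebra in $x, y, z$.
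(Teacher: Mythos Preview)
Your argument is correct and is essentially the paper's own proof: both reduce to the identity $[G^{+}(0),G^{-}(0)]\,v_{x,y,z}=o\bigl(G^{+}_{(0)}G^{-}\bigr)v_{x,y,z}=g_1(x,y,z)\,v_{x,y,z}$, the only difference being that the paper packages the mode computation through the Zhu algebra (computing $[[G^{+}],[G^{-}]]=[G^{+}_{(0)}G^{-}]$ in $A(\mathcal W^{k})$ via Lemma~\ref{zhu-1}) whereas you evaluate the zero mode of \eqref{GpGm-OPE} directly on $v_{x,y,z}$. These are the same calculation in different language, and your bookkeeping for the normally ordered terms and derivatives agrees with the paper's Zhu-algebra projections.
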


\begin{proof}
Assume that the top level $L(x,y,z)_{\text{\rm top}}$ is $1$-dimensional. Then $G^-(0)v_{x,y,z}$ is a singular vector, or equivalently $\bigl[\bigl[G^+\bigr], [G^-]\bigr]v_{x,y,z} = 0$.

The statement follows from the following relation in the Zhu algebra $A\bigl(\mathcal{W}^{k}(\mathfrak{sl}_4, f_{\rm sub})\bigr)$ (proof is given in Appendix~\ref{appA}).
	\end{proof}

\begin{Lemma} 
		In the Zhu algebra $A(\mathcal{W}^{k}(\mathfrak{sl}_4, f_{\rm sub}))$ it holds that
	\begin{align*}
		\bigl[\bigl[G^+\bigr], [G^-]\bigr] ={}&\bigl[G^+(0)G^-\bigr] \\
={}& (k+2) \left(\frac{4 [J] \bigl(6 k^2+k (31-3 [L])-12[L]+40\bigr)}{9 k+24}+\frac{8 (11 k+32) [J] ^3}{3 (3 k+8)^2}\right.\\
&\left.+(k+4)[L]-6 [J] ^2+[W]\right).
	\end{align*}
For $k=-1$, we have
\begin{equation} \label{G^+G-}
\bigl[\bigl[G^+\bigr], [G^-]\bigr]= -6 [J] ^2 + \frac{56}{25}[J] ^3 + 4[J] - \frac{12}{5}[J] [L] + 3[L] + [W].
\end{equation}
\end{Lemma}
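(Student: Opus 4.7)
The plan is to establish the two stated equalities in turn.

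For the first equality $\bigl[[G^+], [G^-]\bigr] = [G^+(0) G^-]$, I appeal to Zhu's commutator identity
\[
[a] * [b] - [b] * [a] = \sum_{j \geq 0}\binom{\Delta_a - 1}{j} [a_{(j)} b],
\]
applied to vectors homogeneous with respect to the chosen conformal vector. Here Zhu's algebra is taken with respect to $\widetilde L = L + \partial J$ (the natural grading on $\mathcal{W}^k(\mathfrak{sl}_4, f_{\rm sub})$ that defines the top space), under which $G^+$ is primary of weight~$1$. Since $\binom{0}{j} = \delta_{j,0}$, only $j=0$ contributes, and the sum collapses to $[G^+_{(0)} G^-]$. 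Note that the paper's mode convention $G^+(z) = \sum G^+(n) z^{-n-1}$ is already adapted to the $\widetilde L$-grading, so $G^+(0) = G^+_{(0)}$.

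For the second equality, I compute $G^+(0) G^-$ inside $\mathcal{W}^k(\mathfrak{sl}_4, f_{\rm sub})$ by reading off the $(z-w)^{-1}$-coefficient from the $G^+(z)G^-(w)$ OPE of Appendix~\ref{appendix-b}; this reproduces the closed form already recorded as \eqref{GpGm-OPE}. I then project this expression to the Zhu algebra using the standard identities
\[
[\partial a] = -\Delta_a [a], \qquad [a] * [b] = \sum_{j \geq 0}\binom{\Delta_a}{j}[a_{(j-1)} b],
\]
the second of which I invert to express every $[{:}ab{:}] = [a_{(-1)} b]$ in terms of $[a]*[b]$ minus corrections from the higher modes $a_{(j)} b$, $j \geq 0$. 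Because $J$ is a Heisenberg field with $J_{(0)} J = 0$, I obtain $[{:}J^n{:}] = [J]^n$ without corrections; the term $[{:}LJ{:}]$ picks up a linear-in-$[J]$ correction coming from $L_{(0)} J = \partial J$ and $L_{(1)} J = J$; and $[{:}\partial J \cdot J{:}]$ collapses to a scalar multiple of $[J]^2$ via $(\partial J)_{(0)} J = 0 = (\partial J)_{(1)} J$. Assembling these reductions with the rational OPE coefficients produces the claimed general-$k$ formula, and direct substitution at $k = -1$ simplifies to \eqref{G^+G-}.

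The main obstacle I anticipate is purely arithmetic: tracking all Zhu-algebra corrections to the normal-ordered products, and in particular verifying that the coefficient of $[J]$ in the final answer --- which accumulates contributions from $\partial^2 J$, from the correction to $[{:}LJ{:}]$, and from the expansion of $[{:}\partial J \cdot J{:}]$ --- combines correctly with the explicit OPE coefficients. This is routine but error-prone. As a consistency check at $k = -1$ one can reverify the identity through the inverse Kazama--Suzuki embedding $\Phi^{\rm inv}$ of Proposition~\ref{ulaganje-2}, under which $[J], [L], [W]$ become concrete polynomials in the generators of $A\bigl(L^{N=2}_{c=-15} \otimes \F_1\bigr)$, where $A\bigl(L^{N=2}_{c=-15}\bigr)$ is already known to be a quotient of $\mathbb{C}[x_1, y_1]$.
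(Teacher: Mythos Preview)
Your proposal is correct and follows essentially the same route as the paper's proof: Zhu's commutator formula with $\Delta_{G^+}=1$ under $\widetilde L$ collapses to $[G^+(0)G^-]$, and then the same list of Zhu-algebra reductions ($[{:}J^3{:}]=[J]^3$, $[{:}\partial J J{:}]=-[J]^2$, $[\partial^2 J]=2[J]$, $[\partial L]=-2[L]$, $[{:}LJ{:}]=[L][J]+[J]$) is applied to the OPE coefficient. The paper does not use the $\Phi^{\rm inv}$ cross-check you suggest, but otherwise the arguments coincide.
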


\begin{Corollary} \label{crit-1_dim}
 Assume that $L(x,y,z)$ is an irreducible $\mathcal{W}_{k=-1}(\mathfrak{sl}_4, f_{\rm sub})$-module such that the top level $L(x,y,z)_{\text{\rm top}}$ is $1$-dimensional.
 Then $L(x,y,z)$ can be realized as a subquotient of an~${L^{N=2}_c \otimes \F_{1}}$-module, viewed as $\mathcal{W}_{k=-1}(\mathfrak{sl}_4, f_{\rm sub})$-module.
\end{Corollary}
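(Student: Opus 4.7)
The plan is to produce $L(x,y,z)$ explicitly as a subquotient of a module of the form $L_c[h,q] \otimes \F_1$, where $L_c[h,q]$ is an irreducible $L^{N=2}_{c=-15}$-module, pulled back along the embedding $\Phi^{\rm inv}\colon \mathcal{W}_{-1}(\mathfrak{sl}_4, f_{\rm sub}) \to L^{N=2}_{c=-15} \otimes \F_1$ from Proposition \ref{ulaganje-2}. Since the top space is one-dimensional, Proposition \ref{dim1-mod} tells us that the weight $(x,y,z)$ satisfies the curve equation $g_1(x,y,z)=0$, so the weight is not arbitrary but lies on the parametrized surface $S_1$.

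First I would translate the constraint $g_1(x,y,z)=0$ into $N=2$ parameters. Given $(x,y,z)$ on the curve, set $h=-4x$ and $q=y-\tfrac{h^{2}}{8}=y-2x^{2}$. A direct substitution into the cubic $-6x^{2}+\tfrac{56}{25}x^{3}+4x-\tfrac{12}{5}xy+3y+z=0$ shows that $z=-\tfrac{1}{25}(h+5)(h^{2}-5h+15q)$. These are precisely the eigenvalues that appear in Lemma \ref{hw_Wk}, so the vector $w_{h,q}=v_{h,q}\otimes \mathbbm{1}\in L_c[h,q]\otimes \F_1$ is a highest weight vector for $\mathcal{W}_{-1}(\mathfrak{sl}_4,f_{\rm sub})$ of weight exactly $(x,y,z)$, where the action is via $\Phi^{\rm inv}$.

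Next I would let $N$ be the $\mathcal{W}_{-1}(\mathfrak{sl}_4,f_{\rm sub})$-submodule of $L_c[h,q]\otimes \F_1$ generated by $w_{h,q}$. By construction $N$ is a highest weight module with highest weight $(x,y,z)$. Since the irreducible highest weight module of a given highest weight is unique up to isomorphism, the unique irreducible quotient of $N$ is isomorphic to $L(x,y,z)$. Thus $L(x,y,z)$ appears as a subquotient of $L_c[h,q]\otimes \F_1$, viewed as a $\mathcal{W}_{-1}(\mathfrak{sl}_4,f_{\rm sub})$-module through $\Phi^{\rm inv}$.

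The only non-trivial step is the algebraic verification that the parametrization $h=-4x$, $q=y-2x^{2}$ converts the $W(0)$-eigenvalue of Lemma \ref{W(n)w} into the value of $z$ forced by $g_1=0$; everything else is a consequence of Proposition \ref{ulaganje-2}, Lemma \ref{hw_Wk} and the uniqueness of irreducible highest weight modules. This is not a serious obstacle, merely the substitution already implicit in the definition of $S_1$.
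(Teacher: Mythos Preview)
Your argument is correct and follows essentially the same route as the paper: use Proposition~\ref{dim1-mod} to place $(x,y,z)$ on the curve $g_1=0$, invert the parametrization to $h=-4x$, $q=y-2x^2$, invoke Lemma~\ref{hw_Wk} to see that $w_{h,q}=v_{h,q}\otimes\mathbbm{1}$ is a $\mathcal{W}_{-1}(\mathfrak{sl}_4,f_{\rm sub})$-highest weight vector of weight $(x,y,z)$ inside $L_c[h,q]\otimes\F_1$, and then take the cyclic submodule and its simple quotient. The only cosmetic difference is that the paper also remarks that the top component of the cyclic submodule is already $1$-dimensional, but this extra observation is not needed for the statement of the corollary.
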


\begin{proof}
	The statement follows from Lemma \ref{hw_Wk} and Proposition \ref{dim1-mod}. Let $x= -\frac{h}{4}$, $ y = q+\frac{1}{8}h^2$ and substitute into the relation \eqref{G^+G-}. Consider $L_c[h,q] \otimes \F_1$ as $\mathcal{W}_{k=-1}(\mathfrak{sl}_4, f_{\rm sub})$-module. Then~\smash{$\widetilde L(x,y,z) = \mathcal{W}_{k=-1}(\mathfrak{sl}_4, f_{\rm sub}). (w_{h,q} \otimes {\bf 1}) $} is a highest weight $\mathcal{W}_{k=-1}(\mathfrak{sl}_4, f_{\rm sub})$-module whose top component is $1$-dimensional. In particular, its simple quotient $L(x,y,z)$ is irreducible~${\mathcal{W}_{k=-1}(\mathfrak{sl}_4, f_{\rm sub})}$-module with $1$-dimensional top space.
\end{proof}

\subsection[Modules for W\_-1(sl\_4, f\_sub) with 2-dimensional top spaces]{Modules for $\boldsymbol{\mathcal{W}_{-1}(\mathfrak{sl}_4, f_{\rm sub})}$ with $\boldsymbol{2}$-dimensional top spaces}

We consider again $L_{c} [h,q] \otimes \F_1$. Let $w_2 = w_{h,q} \otimes e^{\varphi^+}$. Then
\begin{gather*}
	G^+ (0) w_2 = E(-1/2) w_{h,q} \otimes e^{\varphi^+}_{(-1)} e^{\varphi^+} =0, \\
 G^- (0) w_2= F(1/2) w_{h,q} \otimes e^{-\varphi^+}_{(0)} e^{\varphi^+} = F(1/2) w_{h,q} \otimes {\mathbbm 1}, \\
J(0) w_{2} = \left(- \frac{1}{4} H(0) + \frac{5}{4} \varphi^+(0) \right) w_2 = - \frac{h - 5}{4} w_2 =: x w_2 , \\
 L(0) w_{2} = \left( T + \frac{1} {8} {:} H^2{:} - \frac{1}{4} {:} H \varphi^+{:} + \frac{5}{8} {:}\bigl(\varphi^+\bigr) ^2{:} \right)_{(1)} w_2
=\left( q + \frac{h^2 - 2h+ 5}{8}\right) w_2 =: y w_2, \\
 W(0) w_2 = - \frac{1}{25} (h+5) \bigl(h^2 - 5 h + 15 q\bigr) w_2 =: z w_2.
\end{gather*}

Clearly, $(x,y,z) \in {\C}^3$ defined above give all zeros of the polynomial
\[
g_2(x,y,z) = z+ \frac{1}{25} (-5 + 2 x) \bigl(75 - 80 x + 28 x^2 - 30 y\bigr).
\]
 We get the following.

\begin{Lemma} \label{konstr-2}
Assume that
$g_2(x,y,z) = 0$. Then $L(x,y,z)$ is an irreducible highest weight $\mathcal{W}_{-1}(\mathfrak{sl}_4, f_{\rm sub})$-module such that $\dim L(x,y,z)_{\rm top} \le 2$.
\end{Lemma}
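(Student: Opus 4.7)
The plan is to realize $L(x,y,z)$ as the simple quotient of a highest weight $\mathcal{W}_{-1}(\mathfrak{sl}_4, f_{\rm sub})$-submodule of $L_c[h,q]\otimes \F_1$ (viewed as a $\mathcal{W}_{-1}(\mathfrak{sl}_4, f_{\rm sub})$-module via the embedding $\Phi^{\rm inv}$ from Proposition~\ref{ulaganje-2}), for the unique pair $(h,q)$ obtained by inverting the parametrization. This parallels the argument of Corollary~\ref{crit-1_dim} but now admits a two-dimensional top space.

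Given $(x,y,z)\in\mathbb{C}^3$ with $g_2(x,y,z) = 0$, solve the first two relations for $(h,q)$, setting $h = 5 - 4x$ and $q = y - (h^2 - 2h + 5)/8$; the vanishing of $g_2$ is then equivalent to $z = -\frac{1}{25}(h+5)(h^2 - 5h + 15q)$. By the eigenvalue calculation displayed immediately before the lemma, the vector $w_2 = w_{h,q}\otimes e^{\varphi^+}$ in $L_c[h,q]\otimes \F_1$ has $(J(0), L(0), W(0))$-eigenvalues exactly $(x,y,z)$ and satisfies $G^+(0) w_2 = 0$. I would then check the remaining highest weight annihilation conditions---that $J(n), L(n), W(n)$ annihilate $w_2$ for $n\ge 1$, that $G^+(m) w_2 = 0$ for $m\ge 0$, and that $G^-(n) w_2 = 0$ for $n\ge 1$---using the tensor-product formula for $Y(a\otimes b, z)$, the twisted $N=2$ highest weight conditions for $w_{h,q}$, and the leading-order behaviour of $Y(e^{\pm \varphi^+}, z) e^{\varphi^+}$ governed by $\langle\varphi^+,\varphi^+\rangle = 1$. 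The simple quotient of the highest weight module $\mathcal{W}_{-1}(\mathfrak{sl}_4, f_{\rm sub})\cdot w_2$ then realizes $L(x,y,z)$.

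For the bound $\dim L(x,y,z)_{\rm top}\le 2$, recall that the top space is spanned by $\{G^-(0)^i v_{x,y,z}\mid i\ge 0\}$. By Lemma~\ref{sing_G^+} applied to $(G^+)^2$ combined with the automorphism of $\mathcal{W}^{-1}(\mathfrak{sl}_4, f_{\rm sub})$ exchanging $G^+$ and $G^-$, the vector $(G^-)^2$ is singular in $\mathcal{W}^{-1}(\mathfrak{sl}_4, f_{\rm sub})$, hence vanishes in the simple quotient $\mathcal{W}_{-1}(\mathfrak{sl}_4, f_{\rm sub})$. Its image in the Zhu algebra $A(\mathcal{W}_{-1}(\mathfrak{sl}_4, f_{\rm sub}))$ can be written as a polynomial in $[G^\pm], [J], [L], [W]$ analogous to~\eqref{G^+G-} with the roles of $G^+$ and $G^-$ swapped; setting this polynomial to zero and evaluating on the top space forces $G^-(0)^2 v_{x,y,z}\in \mathrm{span}\{v_{x,y,z}, G^-(0) v_{x,y,z}\}$, yielding the desired bound.

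The main obstacle is this last step: translating the abstract vanishing $(G^-)^2 = 0$ into an explicit Zhu-algebra identity that controls $G^-(0)^2$ on the finite-dimensional top. A more computational, but direct, alternative is to work inside the realization $L_c[h,q]\otimes \F_1$: using $\{F(r), F(s)\} = 0$ (in particular $F(1/2)^2 w_{h,q} = 0$) together with a $\widetilde L(0)$-weight count in $\F_1$, one shows by hand that $G^-(0)^2 w_2\in\mathrm{span}\{w_2, G^-(0) w_2\}$ at the relevant weight, which then descends to the simple quotient $L(x,y,z)$.
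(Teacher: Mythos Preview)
Your overall strategy is the same as the paper's: parametrize the zero set of $g_2$ by $(h,q)$, realize the highest weight vector as $w_2 = w_{h,q}\otimes e^{\varphi^+}$ inside $L_c[h,q]\otimes\F_1$ via $\Phi^{\rm inv}$, and take the simple quotient of the cyclic $\mathcal W_{-1}(\mathfrak{sl}_4,f_{\rm sub})$-submodule. The eigenvalue and annihilation checks you outline are exactly the ``above computation'' the paper invokes.

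Where you diverge is in the argument for $\dim L(x,y,z)_{\rm top}\le 2$, and there you make it harder than it is. There is no nontrivial polynomial identity to extract and no obstacle to overcome. Because the OPE $G^-(z)G^-(w)\sim 0$ is regular, every $G^-_{(i)}G^-$ with $i\ge 0$ vanishes, so Zhu's product formula gives
\[
[G^-]*[G^-]=\sum_{i\ge 0}\binom{\deg G^- -1}{i}\bigl[G^-_{(i-1)}G^-\bigr]=\bigl[G^-_{(-1)}G^-\bigr]=\bigl[(G^-)^2\bigr]=0
\]
in $A(\mathcal W_{-1}(\mathfrak{sl}_4,f_{\rm sub}))$, with no correction terms. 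Hence $G^-(0)^2$ acts as zero on every top space, and $L(x,y,z)_{\rm top}$ is spanned by $\{v_{x,y,z},\,G^-(0)v_{x,y,z}\}$. This is precisely how the paper argues it (stated tersely here, spelled out at the start of the proof of Theorem~\ref{klas-1}). Your fallback computation inside the realization via $F(1/2)^2 w_{h,q}=0$ would also succeed, but it is unnecessary.
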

\begin{proof}
 Assume that $(x,y, z)$ is a zero of the equation $g_2(x,y,z) = 0$. Then it can be written in a form
\[
 (x,y,z) = \left(-\frac{h-5}{4}, q+ \frac{h^2 - 2 h + 5}{8}, -\frac{1}{25} (h+5) \bigl(h^2 - 5h + 15 q\bigr)\right)
 \]
for suitable
$(h,q) \in {\C} ^2$. The above computation shows that
$
\widetilde L(x,y,z) = \mathcal W_{-1} (\mathfrak{sl}_4, f_{\rm sub}). w_2
$
is a~highest weight $\mathcal W_{-1} (\mathfrak{sl}_4, f_{\rm sub})$-module whose top space is at most $2$-dimensional. Let $L(x,y,z)$ be the irreducible quotient of $\widetilde L(x,y,z)$. Then $L(x,y,z)$ is an irreducible $\mathcal W_{-1} (\mathfrak{sl}_4, f_{\rm sub})$-module such that $\dim L(x,y,z)_{\rm top} { \le } 2$.
\end{proof}

\subsection[Classification of irreducible W\_-1(sl\_4, f\_sub)-modules]{Classification of irreducible $\boldsymbol{\mathcal{W}_{-1}(\mathfrak{sl}_4, f_{\rm sub})}$-modules}

 Let
\begin{gather*}
 S_{1} = \left\{ \left(-\frac{h}{4}, q+ \frac{h^2}{8}, -\frac{1}{25} (h+5) \bigl(h^2 - 5h + 15 q\bigr)\right),\, (h,q) \in {\C} ^2\right\}, \\
 S_{2} = \left\{ \left(-\frac{h{-}5}{4}, q+ \frac{h^2 - 2 h + 5}{8}, -\frac{1}{25} (h+5) \bigl(h^2 - 5h + 15 q\bigr)\right), \,(h,q) \in {\C} ^2\right\} .
\end{gather*}

We have seen earlier that $S_i = \{ (x,y,z) \mid g_i (x,y,z) =0\}$, $i=1,2$. Now we have our main result.

\begin{Theorem} \label{klas-1}
The set $\{L(x,y,z) \mid (x,y,z) \in S_1 \cup S_2 \}$ provides a complete list of irreducible, highest weight $\mathcal{W}_{k=-1}(\mathfrak{sl}_4, f_{\rm sub})$-modules. Moreover,
\begin{gather*}
 \dim L(x,y,z)_{\rm top} = 1 \iff (x,y,z) \in S_1, \\ \dim L(x,y,z)_{\rm top} = 2 \iff (x,y,z) \in S_2 \setminus S_1.
 \end{gather*}
\end{Theorem}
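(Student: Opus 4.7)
The plan is to combine the existence results from Section \ref{duality} with the bound $\dim L(x,y,z)_{\rm top} \le 2$, which follows from the singularity of $(G^+)^2$ (Lemma \ref{sing_G^+}). Existence is already at hand: for each $(x,y,z) \in S_1$, Corollary \ref{crit-1_dim} produces $L(x,y,z)$ inside $L^{N=2}_{c=-15} \otimes \F_1$ from the vector $w_{h,q} = v_{h,q}\otimes\mathbbm{1}$, and the construction yields a one-dimensional top space; for each $(x,y,z) \in S_2$, Lemma \ref{konstr-2} similarly produces $L(x,y,z)$ from $w_2 = v_{h,q}\otimes e^{\varphi^+}$, with top space of dimension at most two.

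For completeness, let $L(x,y,z)$ be an arbitrary irreducible highest weight $\mathcal{W}_{-1}(\mathfrak{sl}_4, f_{\rm sub})$-module. In the one-dimensional case $G^-(0)v_{x,y,z} = 0$, and Proposition \ref{dim1-mod} immediately yields $g_1(x,y,z) = 0$, so $(x,y,z) \in S_1$. In the two-dimensional case I would invoke the Kazama--Suzuki duality: Proposition \ref{ulaganje-1} decomposes $L(x,y,z)\otimes \F_{-1}$ over $L^{N=2}_{c=-15}\otimes M_{H^\perp}(1)$, identifying an irreducible $L^{N=2}_{c=-15}$-component with a highest weight module $L_c[h,q]$. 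Conversely, via Proposition \ref{ulaganje-2}, $L(x,y,z)$ is then realized as a summand of $L_c[h,q]\otimes \F_1$. Verifying that among the candidate vectors $v_{h,q}\otimes e^{n\varphi^+}$ the conditions for a $\mathcal{W}_{-1}(\mathfrak{sl}_4, f_{\rm sub})$-highest weight vector with two-dimensional top are satisfied only by $v_{h,q}\otimes e^{\varphi^+}$, Lemma \ref{konstr-2} then identifies $(x,y,z) \in S_2$. The dimension characterization is then automatic: $(x,y,z) \in S_1$ gives $\dim = 1$ by Corollary \ref{crit-1_dim}, while for $(x,y,z) \in S_2\setminus S_1$ we have $\dim \le 2$ always, and $\dim = 1$ would contradict Proposition \ref{dim1-mod}, hence $\dim = 2$.

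The main technical hurdle is the two-dimensional case of completeness, specifically verifying that no highest weight vector other than $v_{h,q}\otimes e^{\varphi^+}$ (up to scalar) produces a two-dimensional-top $\mathcal{W}_{-1}(\mathfrak{sl}_4, f_{\rm sub})$-module. Applying the mode expansions of $G^\pm$ in the form given by $\Phi^{\rm inv}$ to $v_{h,q}\otimes e^{n\varphi^+}$ for $|n|\ge 2$ shows that the twisted highest weight conditions on $v_{h,q}$ force the mode $G^-(1)$ to act non-trivially, ruling out these vectors as highest weight vectors for generic $(h,q)$. The spectral flow automorphisms $\psi^m$ of $\mathcal{W}^{-1}(\mathfrak{sl}_4, f_{\rm sub})$ account for translations between the different summands in Proposition \ref{ulaganje-2}(3) but do not produce genuinely new irreducible highest weight modules outside $S_1 \cup S_2$.
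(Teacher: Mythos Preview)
Your existence argument and your treatment of the one-dimensional case are fine; in fact, invoking Proposition~\ref{dim1-mod} directly for $\dim L(x,y,z)_{\rm top}=1$ is cleaner than what the paper does (the paper instead passes through the embedding $\Phi$ and computes the $W(0)$-eigenvalue on $v_{x,y,z}\otimes\mathbbm{1}$).

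The two-dimensional completeness argument, however, has a genuine gap. You propose to go \emph{back and forth}: use $\Phi$ to find some $N=2$ component $L_c[h,q]$ inside $L(x,y,z)\otimes\F_{-1}$, then use $\Phi^{\rm inv}$ to locate $L(x,y,z)$ inside $L_c[h,q]\otimes\F_1$, and finally check that the highest weight vector sits at $v_{h,q}\otimes e^{\varphi^+}$. None of these steps is pinned down. Proposition~\ref{ulaganje-1} decomposes the \emph{vacuum} module, not an arbitrary module, so it does not by itself give you a canonical $L_c[h,q]$ component; you never say which $(h,q)$ you pick or why the back-and-forth recovers the original $L(x,y,z)$; and the ``verification'' that only $v_{h,q}\otimes e^{\varphi^+}$ works is asserted rather than carried out. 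The remark about $G^-(1)$ acting nontrivially for $|n|\ge 2$ and the comment about spectral flow do not constitute a proof.

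The paper's route avoids all of this by staying on one side of the duality. Using only the embedding $\Phi\colon L^{N=2}_{c=-15}\to \mathcal{W}_{-1}(\mathfrak{sl}_4,f_{\rm sub})\otimes\F_{-1}$, one checks that $w_2:=v_{x,y,z}\otimes e^{\varphi^-}$ is an $N=2$ highest weight vector, with weight
\[
(h,q)=\Bigl(-4x+5,\; y-2x^2+4x-\tfrac{5}{2}\Bigr).
\]
The crucial point is then that $W\otimes 1$ lies in $\operatorname{Im}(\Phi)$ and equals $-\tfrac{3}{2}W^{N=2}_{c=-15}$ there (Corollary~\ref{pf} together with the normalization in Proposition~\ref{ulaganje-2}). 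Hence the eigenvalue $z$ of $W(0)$ on $v_{x,y,z}$ is computed by Lemma~\ref{W(n)w} applied to the $N=2$ weight $(h,q)$ of $w_2$, giving exactly the $S_2$ parametrization. No second embedding, no search among the vectors $v_{h,q}\otimes e^{n\varphi^+}$, and no genericity hypothesis is needed. If you want to repair your argument, the shortest fix is to drop the detour through $\Phi^{\rm inv}$ entirely and instead show directly that $v_{x,y,z}\otimes e^{\varphi^-}$ satisfies the twisted $N=2$ highest weight conditions when $\dim L(x,y,z)_{\rm top}=2$.
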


\begin{proof}
We already proved in Corollary \ref{crit-1_dim} and Lemma \ref{konstr-2} that $L(x,y,z)$, $(x,y,z) \in S_1 \cup S_2$, are irreducible $\mathcal{W}_{-1}(\mathfrak{sl}_4, f_{\rm sub})$-modules. Now we shall see that these modules give all irreducible, highest weight $\mathcal{W}_{k=-1}(\mathfrak{sl}_4, f_{\rm sub})$-modules.

Since ${:}\bigl(G^{\pm}\bigr)^2{:}$ belong to the maximal ideal in $\mathcal{W}^{k=-1}(\mathfrak{sl}_4, f_{\rm sub})$, it follows that ${:}\bigl(G^{\pm}\bigr) ^2{:} =0$ in $\mathcal{W}_{k=-1}(\mathfrak{sl}_4, f_{\rm sub})$ and hence $\dim L(x,y,z)_{\rm top} \le 2$.

Assume first that $\dim L(x,y,z)_{\rm top} = 1$. Then using the homomorphism
\[
\Phi\colon\ L^{N=2}_{c=-15} \rightarrow \mathcal{W}_{-1}(\mathfrak{sl}_4, f_{\rm sub}) \otimes \F_{-1}
\]
 from Proposition \ref{ulaganje-1}, we get that $L(x,y,z) \otimes \F_{-1}$ is a $L^{N=2}_{c=-15}$-module. Then $w_1 = v_{x,y,z} \otimes {\mathbbm 1}$ is a highest weight vector for the action of $ L^{N=2}_{c=-15}$ with highest weight $ (h,q) = \bigl(-4 x, y -2 x^2\bigr)$. Since $W = -\frac{3}{2} W^{N=2}_{c=-15} $, we get that
\[
W(0) w_1 = zw_1= -\frac{1}{25} (h+5) \bigl(h^2 - 5h + 15 q\bigr) w_1.
\]
 Therefore, $(x,y,z) \in S_1$.

Assume next that $\dim L(x,y,z)_{\rm top} = 2$ and consider again $L(x,y,z) \otimes \F_{-1}$ as a $L^{N=2}_{c=-15}$-module. Let \smash{$w_2 = v_{x,y,z} \otimes e^{\varphi^-}$}. It follows that $w_2$ is a highest weight vector for the action of~$ L^{N=2}_{c=-15}$ with highest weight $ (h,q) = \bigl(-4 x+5, y -2 x^2 + 4 x - \frac{5}{2}\bigr)$. Then
\[
 x= -\frac{h-5}{4}, \qquad y = q+ 2x^2 - 4 x + \frac{5}{2} = q + \frac{1}{8} h^2 - \frac{1}{4} h + \frac{5}{8}.
 \]

As above, the action of $W(0)$ is given by
\[
W(0) w_2 = -\frac{1}{25} (h+5) \bigl(h^2 - 5h + 15 q\bigr) w_2.
\]
 Hence, $(x,y,z) \in S_2$.
The proof follows.
\end{proof}

 \appendix

\section[Relations in the Zhu algebra A(W\^{}k(sl\_4, f\_sub))]{Relations in the Zhu algebra $\boldsymbol{ A(\mathcal{W}^{k}(\mathfrak{sl}_4, f_{\rm sub}))}$}\label{appA}

The vertex algebra $\mathcal{W}^{k}(\mathfrak{sl}_4, f_{\rm sub})$ is generated by fields $J$, $L$, $G^+$, $G^-$, $W$ of conformal weight 1, 2, 1, 3, 3 with respect to the grading operator $\widetilde L(0)$.
	Hence the Zhu algebra $A\bigl(\mathcal{W}^{k}(\mathfrak{sl}_4, f_{\rm sub})\bigr)$ is generated by $ [G^+]$, $ [G^- ]$, $ [J ]$, $ [L]$, $ [W]$.

\begin{Lemma} \label{zhu-1}
		In the Zhu algebra $A\bigl(\mathcal{W}^{k}(\mathfrak{sl}_4, f_{\rm sub})\bigr)$ it holds that
	\begin{align*}
		\bigl[\bigl[G^+\bigr], [G^-]\bigr] ={}&\bigl[G^+(0)G^-\bigr] \\
		 ={}& (k+2) \left(\frac{4 x \bigl(6 k^2+k (31-3 y)-12 y+40\bigr)}{9 k+24}+\frac{8 (11 k+32) x^3}{3 (3 k+8)^2}\right.\\
&\left.+(k+4) y-6 x^2+z\right).
	\end{align*}
For $k=-1$, we have
\[
 \bigl[\bigl[G^+\bigr], [G^-]\bigr] = -6 [J]^2 + \frac{56}{25}[J]^3 + 4[J] - \frac{12}{5}[J] [L] + 3[L] + [W].
 \]
\end{Lemma}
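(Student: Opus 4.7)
The plan is to first reduce $\bigl[\bigl[G^+\bigr], [G^-]\bigr]$ to the single mode product $[G^+(0) G^-]$ via Zhu's commutator formula, and then to project the vertex-algebra expression for $G^+(0) G^-$ --- already computed in the proof of Proposition~\ref{ulaganje-2} --- term by term down to the Zhu algebra.

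For the first equality, I would invoke Zhu's formula: for $a$ homogeneous of conformal weight $h_a$ with respect to the grading defining $A(V)$,
\[
 a * b - b * a \equiv \sum_{n \geq 0} \binom{h_a - 1}{n} a_{(n)} b \pmod{O(V)},
\]
where $*$ denotes the Zhu product. Since the Zhu algebra is taken with respect to the shifted Virasoro $\widetilde L$ and $G^+$ has $\widetilde L$-weight $1$, one has $\binom{0}{n} = \delta_{n,0}$, so only the $n = 0$ summand survives and $\bigl[\bigl[G^+\bigr], [G^-]\bigr] = [G^+(0) G^-]$.

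For the second equality, equation~\eqref{GpGm-OPE} in the proof of Proposition~\ref{ulaganje-2} expresses $G^+(0) G^-$ as $(k+2)$ times a linear combination of $W$, ${:}J^3{:}$, ${:}LJ{:}$, ${:}(\partial J) J{:}$, $\partial L$ and $\partial^2 J$. Each summand then projects to $A(\mathcal{W}^{k}(\mathfrak{sl}_4, f_{\rm sub}))$ by standard moves: $[\partial^n a] = (-1)^n h_a (h_a + 1) \cdots (h_a + n - 1) [a]$ gives $[\partial L] = -2y$ and $[\partial^2 J] = 2x$; since $J_{(0)} J = 0$, one gets $[{:}J^3{:}] = x^3$; the identity ${:}(\partial J) J{:} = \tfrac{1}{2} \partial({:}J^2{:})$ yields $[{:}(\partial J) J{:}] = -x^2$; and using that $J$ is $L$-primary of weight $1$, $L * J = {:}LJ{:} + 2 \partial J + J$, which combined with $[\partial J] = -x$ produces $[{:}LJ{:}] = xy + x$.

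Substituting these into the image of $G^+(0) G^-$ and collecting coefficients will yield the stated general-$k$ formula; the $k = -1$ formula is the immediate specialization after simplifying $\tfrac{8(11k+32)}{3(3k+8)^2}$ to $\tfrac{56}{25}$, $-\tfrac{4(k+4)}{3k+8}$ to $-\tfrac{12}{5}$, $(k+4)(k+2)$ to $3$, $\tfrac{4(6k^2 + 31k + 40)}{3(3k+8)}$ to $4$, and $(k+2)$ to $1$. The main obstacle is purely computational bookkeeping: one must track normal-ordering corrections carefully and remain consistent that weights throughout are taken with respect to $\widetilde L$, not the original Virasoro $L$.
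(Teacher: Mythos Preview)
Your proposal is correct and follows essentially the same route as the paper: use that $G^+$ has $\widetilde L$-weight $1$ so Zhu's commutator formula collapses to $[G^+(0)G^-]$, then project the OPE expression term by term via the standard identities $[\partial^n a]=(-1)^n(h_a)_n[a]$ and the computation $[{:}LJ{:}]=[L][J]+[J]$ obtained from expanding $[L]*[J]$. The only cosmetic difference is that the paper quotes the formula for $G^+(0)G^-$ directly from the OPE list in Appendix~\ref{appendix-b} rather than via equation~\eqref{GpGm-OPE}.
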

		
	\begin{proof}
	Using the commutator formula (cf.\ \cite{Z})
	\[
 [a]*[b] - [b]*[a] = \text{Res}_z (z+1)^{\deg a -1} Y(a,z) b,
 \]
 we have that
 \[
 \bigl[\bigl[G^+\bigr], [G^-]\bigr] = \text{Res}_z (z+1)^{\deg G^+ -1} G^+(z)G^- = \bigl[G^+(0)G^-\bigr].
 \]
	
From the OPEs for the subregular $\mathcal W$-algebra $\mathcal{W}^{k}(\mathfrak{sl}_4 f_{\rm sub})$, we have that
		\begin{align*}
		G^+(0)G^- ={}& (k+2)\left(W +\frac{8(k+11)}{3(3k+8)^2}{:}J^3{:}-\frac{4(k+4)}{3k+8}{:}LJ{:} + 6{:}\partial JJ{:}\right. \\
	 &+ \left.\frac{k+4}{2}\partial L + \frac{4\bigl(3k^2+17k+26\bigr)}{3(3k+8)}\partial ^2J\right) .
		\end{align*}	
Let us compute the projection of these elements in the Zhu algebra $A\bigl(\mathcal{W}^{k}(\mathfrak{sl}_4, f_{\rm sub})\bigr)$.
Since
\[
[J(-i_1 -1)\cdots J(-i_n -1)\mathbbm{1}] = (-1)^{i_1 + \dots + i_n}[(J(-1)^n\mathbbm{1}] = (-1)^{i_1 + \dots + i_n}x^n,
\]
 we have the following relations:
\begin{enumerate}\itemsep=0pt
	\item[(1)] $\bigl[{:}J^3{:}\bigr]=\bigl(J(-1)^3\mathbbm{1}\bigr] = [J]^3$,
	\item[(2)] $[{:}\partial JJ{:}] = [J(-2)J(-1)\mathbbm{1}] = -\bigl[J(-1)^2\mathbbm{1}\bigr] = -[J]^2$,
	\item[(3)] $\bigl[\partial ^2J\bigr] = 2[J(-3)\mathbbm{1}] = 2[J]$.
\end{enumerate}

Next, we claim that
\begin{enumerate}
 \item[(4)] $[\partial L] = [L(-3)\mathbbm{1}] = -2[L]$.
\end{enumerate}	
Since
$(L(-n-2) + 2L(-n-1)+ L(-n))v \in O(V)$
(cf.\ \cite{Z}),	it follows that $[L(-3)\mathbbm{1}] = -2[L(-2)\mathbbm{1}] = -2[L] $.
\begin{enumerate}
 \item[(5)] $[{:}LJ{:}] = [J][L]+[J]$.
\end{enumerate}		
	
We have
\begin{align*}
		[L]*[J] &= \text{Res}_z \frac{(1+z)^2}{z}L(z)J
		 = \left(\frac{1}{z} + 2 + z\right)\left(\sum L(n)z^{-n-2}\right)J \\
		&= [L(-2)J(-1)\mathbbm{1}] + 2[L(-1)J(-1)\mathbbm{1}] + [L(0)J(-1)\mathbbm{1}] \\
		&= [L(-2)J(-1)\mathbbm{1}] +[L(-1)J(-1)\mathbbm{1}] \\
		&= [L(-2)J(-1)\mathbbm{1}] +[J(-2)\mathbbm{1}] = [L(-2)J(-1)\mathbbm{1}] -[J].
\end{align*}

	Combining these relations and evaluating at $k=-1$ we obtain that
\[
 \bigl[G^+(0)G^-\bigr]= -6 [J]^2 + \frac{56}{25}[J]^3 + 4[J] - \frac{12}{5}[J] [L] + 3[L] + [W].\tag*{\qed}
 \] \renewcommand{\qed}{}
	\end{proof}

 \begin{Lemma} The projection of $W$ in $A\bigl(L_c^{N=2}\bigr)$ is given by
\[
[W] = -\frac{1}{25} ([H]+5) \left([H]^2-5 [H]+15[T]\right).
\]

 \end{Lemma}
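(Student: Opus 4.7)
The plan is to deduce the formula by computing the eigenvalue by which the zero mode $W(0)$ acts on the highest-weight vector of each Verma module and then identifying the resulting polynomial inside the Zhu algebra. Recall from the preliminaries that $A\bigl(V^{N=2}_c\bigr) \cong \mathbb{C}[x_1,y_1]$ via $[H({-1}){\mathbbm 1}] \mapsto x_1$ and $[T({-2}){\mathbbm 1}] \mapsto y_1$, so $[W]$ is a \emph{unique} polynomial in $[H]$ and $[T]$, and that polynomial is pinned down by its values on the family of top components $\mathbb{C} v_{h,q}$, $(h,q) \in \mathbb{C}^2$.

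First I would invoke Lemma~\ref{W(n)w} with the normalization $\nu = -\tfrac{3}{2}$ (fixed by the identification $W = -\tfrac{3}{2}\, W^{N=2}_{c=-15}$ established in Proposition~\ref{ulaganje-2} via equation~\eqref{W}) to obtain
\[
W(0) v_{h,q} = -\tfrac{1}{25}(h+5)\bigl(h^2 - 5h + 15 q\bigr) v_{h,q}.
\]
By Zhu's correspondence, $[W]$, viewed as an element of the polynomial algebra $\mathbb{C}[x_1,y_1]$, must satisfy the same scalar identity after the substitution $(x_1, y_1) \mapsto (h,q)$; since $(h,q)$ ranges over all of $\mathbb{C}^2$, the polynomial itself is forced to be $-\tfrac{1}{25}(x_1+5)(x_1^2 - 5 x_1 + 15 y_1)$. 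Translating back under the isomorphism, this yields the claimed identity in $A\bigl(V^{N=2}_c\bigr)$ and hence in its quotient $A\bigl(L^{N=2}_c\bigr)$.

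The only non-routine input is Lemma~\ref{W(n)w} itself, which reduces to expanding each normal-ordered summand in the definition~\eqref{W} of $W$ (namely ${:}EF{:}$, ${:}H L^{\perp}{:}$, ${:}H^3{:}$, together with the derivative terms $\partial L^{\perp}$, $\partial^2 H$ and ${:}\partial H\, H{:}$) into modes of $E$, $F$, $H$ and $T$, and applying the twisted highest-weight conditions $E\bigl(n-\tfrac{1}{2}\bigr) v_{h,q} = F\bigl(n+\tfrac{3}{2}\bigr) v_{h,q} = 0$ for $n \ge 0$ together with $H(0) v_{h,q} = h v_{h,q}$ and $T(0) v_{h,q} = q v_{h,q}$. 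The mode bookkeeping is the main technical obstacle, but it is a direct calculation using the $N=2$ OPEs; the step from Lemma~\ref{W(n)w} to the Zhu-algebra identity is then immediate.
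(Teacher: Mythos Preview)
Your argument is correct, but it takes a different route from the paper. The paper proves the lemma by a direct Zhu-algebra computation: it takes the explicit expression
\[
W = -\tfrac{3}{2}\Bigl({:}EF{:} - \partial L^{\perp} + \tfrac{1}{5}{:}\partial H\,H{:} + \tfrac{2}{5}{:}H L^{\perp}{:} - \tfrac{1}{3}\partial^{2}H - \tfrac{1}{75}{:}H^{3}{:}\Bigr)
\]
and projects each summand into $A\bigl(L^{N=2}_c\bigr)$ term by term, using identities such as $[{:}H^3{:}]=[H]^3$, $[{:}\partial H\,H{:}]=-[H]^2$, $[\partial^2 H]=2[H]$, $[\partial L^{\perp}]=-2[T]-\tfrac{1}{5}[H]^2$, $[{:}HL^{\perp}{:}]=[H][T]+\tfrac{1}{10}[H]^3$, together with the key vanishing $[E_{(-1)}F]=0$. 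Summing these gives the stated polynomial.

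Your approach instead reuses Lemma~\ref{W(n)w}: since $A\bigl(V^{N=2}_c\bigr)\cong\mathbb{C}[x_1,y_1]$, the element $[W]$ is the unique polynomial whose evaluation at $(h,q)$ matches the scalar by which the zero mode $o(W)=W(0)$ acts on the top of the corresponding highest-weight module, and Lemma~\ref{W(n)w} supplies exactly that scalar. The underlying mode calculation is the same in both routes; what differs is the packaging. The paper's version is self-contained and displays the Zhu-algebra mechanics explicitly, whereas your version is shorter once Lemma~\ref{W(n)w} is available and makes transparent why Lemmas~\ref{W(n)w} and~\ref{lemma5.3} are two faces of the same computation. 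One small point you pass over silently: the twisted highest-weight vector $v_{h,q}$ lies in the top component only with respect to the shifted grading $\widetilde T(0)$, so the Zhu correspondence you invoke is implicitly the $\widetilde T$-graded one; this is consistent with the paper's own convention (visible in the exponent $\tfrac{1}{2}-\tfrac{1}{2}$ when they show $[E_{(-1)}F]=0$), but it is worth making explicit.
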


 \begin{proof}
 	From the proof of Proposition \ref{ulaganje-2}, we have that the field $W \in \mathcal{W}^{k}(\mathfrak{sl}_4, f_{\rm sub})$ can be realized in $L^{N=2}_{c=-15} \otimes \F_{1}$ as
 	\begin{align*}
 W= -\frac{3}{2} W^{N=2}_{c=-15}
= -\frac{3}{2}\left({:}EF{:} - \partial L^{\perp}+ \frac{1}{5}{:}\partial H H {:} + \frac{2}{5}{:}HL^{\perp}{:} -\frac{1}{3}\partial^2H - \frac{1}{75}{:}H^3{:} \right).
 \end{align*}
 	
 	Analogously to the proof of Lemma \ref{zhu-1}, we compute the projection of the elements into the Zhu algebra $A\bigl(L_c^{N=2}\bigr)$:
 	\begin{enumerate}\itemsep=0pt
	\item[(1)] $\bigl[{:}H^3{:}\bigr]=\bigl[H(-1)^3\mathbbm{1}\bigr] = [H]^3$,
	\item[(2)] $[{:}\partial HH{:}] = [H(-2)H(-1)\mathbbm{1}] = -\bigl[H(-1)^2\mathbbm{1}\bigr] = -[H]^2$,
	\item[(3)] $\bigl[\partial ^2H\bigr] = 2[H(-3)\mathbbm{1}] = 2[H]$.
\end{enumerate}
As $ L^{\perp} = T + \frac{1}{10}{:}HH{:}$, we have
\begin{enumerate}\itemsep=0pt
 \item[(4)] $\bigl[\partial L^{\perp}\bigr] = \bigl[\partial T + \frac{1}{10}\partial ({:}HH{:})\bigr] = [T(-3)\mathbbm{1}] + \frac{1}{10}[2H(-2)H(-1)\mathbbm{1}] =-2[T] -\frac{1}{5}[H]^2 $.
\end{enumerate}	
We claim that
\begin{enumerate}\itemsep=0pt
 \item[(5)] $\bigl[{:}HL^{\perp}{:}\bigr] = [H][T]+\frac{1}{10}[H]^3$.
\end{enumerate}		
	
We have $\bigl[{:}HL^{\perp}{:}\bigr] = \bigl[{:}HT{:} + \frac{1}{10}{:}H^3{:}\bigr]$. Let us compute $[{:}HT{:}]$
\begin{align*}
		[H]*[T] &= \text{Res}_z \frac{(1+z)^1}{z}H(z)T = \left(\frac{1}{z} + 1\right)\left(\sum H(n)z^{-n-1}\right)T \\
		&= [H(-1)T(-2)\mathbbm{1}] + [H(0)T(-2)\mathbbm{1}] = [HT].
\end{align*}

Next, we have
\begin{enumerate}\itemsep=0pt
\item[(6)] $[E_{(-1)}F] = 0$.
\end{enumerate}
This follows from
	\begin{align*}
		[E_{(-1)} \mathbbm{1}] \circ [F_{(-1)} \mathbbm{1}] &= 0 = \text{Res}_z E (z)\frac{(1+z)^{{\frac{1}{2}} - \frac{1}{2}}}{z}F_{(-1)} \mathbbm{1} \\
		&= \text{Res}_z \frac{1}{z}\left(\sum E_{(n)}z^{-n-1}\right)F
		= [E_{(-1)}F].\tag*{\qed}
		\end{align*}\renewcommand{\qed}{}
\end{proof}

 \section[Operator product expansions for W\^{}k(sl\_4, f\_sub)]{Operator product expansions for $\boldsymbol{\mathcal{W}^{k}(\mathfrak{sl}_4, f_{\rm sub})}$}
 \label{appendix-b}

 The subregular $\mathcal{W}$-algebra $\mathcal{W}^{k}(\mathfrak{sl}_4, f_{\rm sub})$ is isomorphic to the Feigin--Semikhatov algebra $W^{(2)}_4$. It is generated by the even fields $J(z)$, $L(z)$, $G^{\pm}(z)$, $W(z)$ satisfying the following OPEs (cf.~\cite{FS}):\looseness=1
 \begin{gather*}
 J(x)J(y) \sim \frac{3k+8}{4(z-w)^{2}}, \qquad
 J(z)G^{\pm}(w) \sim \pm \frac{G^{\pm}(w)}{z-w} , \enskip \qquad G^{\pm}(z)G^{\pm}(w) \sim0, \\
	L(z)G^{\pm}(w) \sim \frac{2G^{\pm}(w)}{(z-w)^{2}} + \frac{\partial G^{\pm}(w)}{(z-w)}, \qquad
	L(z)J(w) \sim \frac{J(w)}{(z-w)^{2}} + \frac{\partial J(w)}{(z-w)}, \\
	L(z)L(w) \sim - \frac{c_k}{2(z-w)^{4}}+ \frac{2L(w)}{(z-w)^{2}}+ \frac{\partial L(w)}{z-w}, \\
	L(z) W(w) \sim \frac{3 W(w)}{(z-w)^{2}} + \frac{ \partial W(w)}{(z-w)}, \qquad J(z) W(w) \sim 0,
\\
	G^+(z)G^-(w) \sim \frac{(k+2)(2k+5)(3k+8)\mathbbm{1}}{(z-w)^{4}}+ \frac{4(k+2)(2k+5)J(w)}{ (z-w)^{3}} \\
	\phantom{	G^+(z)G^-(w) \sim }{} + (k+2) \frac{6 {:}JJ{:}(w) +2(2k+5)\partial J(w)-(k+4)L(w)}{(z-w)^{2}} \\
		\phantom{	G^+(z)G^-(w) \sim }{} + (k+2)\left( W(w) + \left(\frac{8(k+11)}{3(3k+8)^2}{:}J^3{:}(w)-\frac{4(k+4)}{3k+8}{:}L(w)J(w){:}\right.\right.\\
\left.\left.\phantom{	G^+(z)G^-(w) \sim }{}+ 6{:}\partial J(w)J(w){:}	 + \frac{k+4}{2}\partial L(w)\right.\right. \\
\left.\left.\phantom{	G^+(z)G^-(w) \sim }{}+ \frac{4(3k^2+17k+26)}{3(3k+8)}\partial ^2J(w)\right)\right)(z-w)^{-1},
\\
	 W(z)G^{\pm}(w) \sim \pm\frac{2(k+4)(3k+7)(5k+16)G^{\pm}(w)}{(3k+8)^2(z-w)^{3}}+ \left(\pm\frac{3(k+4)(5k+16)\partial G^{\pm}(w)}{2(3k+8)} \right. \\
\phantom{ W(z)G^{\pm}(w) \sim }{}	 \left. -\frac{6(k+4)(5k+16){:}J(w)G^{\pm}(w){:} }{(3k+8)^2}\right)(z-w)^{-2} \\
\phantom{ W(z)G^{\pm}(w) \sim }{}+\left(-\frac{8(k+4)(k+3){:}J(w)\partial G^{\pm}(w){:}}{(k+2)(3k+8)} \right. \\
\phantom{ W(z)G^{\pm}(w) \sim }{}	 \left. - \frac{4(k+4)(3k^2+15k+16){:}\partial J(w)G^{\pm}(w){:}}{(k+2)(3k+8)^2}\right.\\
 \phantom{ W(z)G^{\pm}(w) \sim }{}\left. \pm \frac{(k+4)(k+3)\partial ^2G^{\pm}(w)}{(k+2)}\mp \frac{2(k+4)^2{:}L(w)G^{\pm}(w){:}}{(k+2)(3k+8)}\right. \\
	 \phantom{ W(z)G^{\pm}(w) \sim }{} \left. \pm \frac{4(k+4)(5k+16){:}J(w)^2G^{\pm}(w){:}}{(k+2)(3k+8)^2} \right)(z-w)^{-1} , \\
	 W(z)W(w) \sim \frac{2(k+4)(2k+5)(3k+7)(5k+16)\mathbbm{1}}{(3k+8)(z-w)^{6}} -\frac{3(k + 4)^2( 5 k + 16)}{(3k+8)(z-w)^{4}}L^{\perp}(w) \\
\phantom{W(z)W(w) \sim}{}	 -\frac{3(k + 4)^2( 5 k + 16)}{2(3k+8)(z-w)^{3}}\partial L^{\perp}(w) \\
\phantom{W(z)W(w) \sim}{}+\left( -\frac{3(k + 4)^2( 5 k + 16)(12k^2 + 59k + 74)}{4(3k+8)(20k^2 + 93k + 102)}\partial^2L ^{\perp}(w)\right. \\
\phantom{W(z)W(w) \sim}{}	 + \left. \frac{8(k + 4)^3( 5 k + 16)}{(3k+8)(20k^2 + 93k + 102)}{:}L^{\perp}(w)L^{\perp}(w){:} + 4 (k+4) \Lambda (w)\right) (z-w)^{-2} \\
\phantom{W(z)W(w) \sim}{}	 + \biggl( -\frac{(k + 4)^2( 5 k + 16)(12k^2 + 59k + 74)}{6(3k+8)(20k^2 + 93k + 102)}\partial ^3L^{\perp}(w) \\
\phantom{W(z)W(w) \sim}{}	 +   \frac{8(k + 4)^3( 5 k + 16)}{(3k+8)(20k^2 + 93k + 102)}{:}\partial L^{\perp}(w)L^{\perp}(w){:} \\
\phantom{W(z)W(w) \sim}{}  + 2 (k+4) \partial \Lambda(w)\biggr)(z-w)^{-1},
\end{gather*}
 where
 \begin{align*}
 	(k+2)^2\Lambda = {}&{:} G^+ G^-{:} + (k+2) \left(-\frac{\partial W(z)}{2} -\frac{4 {:}W(z)J(z){:}}{3k+8}\right. \\
 &\left.+ \frac{(k+2)(k+4)\bigl(6k^2+33k+46\bigr)}{2(3k+8)\bigl(20k^2+93k+102\bigr)}\partial^2L^{\perp}(z)\right. \\
 	 &- \left. \frac{(k+4)^2(11k+26)}{2(3k+8)\bigl(20k^2+93k+102\bigr)}{:}L^{\perp}(z)L^{\perp}(z){:} + \frac{2(k+4)}{3k+8}\partial {:}L^{\perp}(z)J(z){:} \right.\\
 	 &+ \left. \frac{8(k+4)}{(3k+8)^2}{:}L^{\perp}(z)J(z)J(z){:} - \frac{2k+5}{3k+8}\left(\frac{8}{3}{:}\partial ^2J(z)J(z){:} + 2{:}\partial J(z)\partial J(z){:}\right. \right. \\
 	 &+ \left. \left. \frac{16}{3k+8}{:}\partial J(z)J(z)J(z){:} + \frac{32}{3(3k+8)^2}{:}J(z)^4{:} + \frac{3k+8}{6}\partial ^3J(z)\right) \right),
 \end{align*}
 $L^{\perp} = L-\frac{2}{3k+8}{:}JJ{:}$ and $c_k = -\frac{(3k+8)(8k+17)}{(k+4)}$.

 \subsection*{Acknowledgements}
We thank Shigenori Nakatsuka for interesting discussions on Kazama--Suzuki duality and Andrej Dujella for help in numerical calculations in Section~\ref{KS_gen}. We also would like to thank the referees for their valuable comments.
The authors are partially supported by the Croatian Science Foundation under the project IP-2022-10-9006
 and by the project ``Implementation of cutting edge research and its application as part of the Scientific Center of Excellence QuantiXLie'', PK.1.1.02, European Union, European Regional Development Fund. A part of this work was done while D.A. was staying at RIMS, Kyoto University in September 2024.

\pdfbookmark[1]{References}{ref}
\LastPageEnding

\end{document}